\newtheorem{theorem}{Theorem}
\newtheorem{lemma}[theorem]{Lemma}
\newtheorem{prop}[theorem]{Proposition}
\newtheorem{cor}[theorem]{Corollary}
\newtheorem{deF}[theorem]{Definition}
\newtheorem{alg}[theorem]{Algorithm}
\newtheorem{example}[theorem]{Example}
\newtheorem{conjecture}[theorem]{Conjecture}
 \renewcommand{\implies}{\Rightarrow}
\newcommand{\zz}{\mathbb{Z}}
\newcommand{\nat}{\mathbb{N}}
\newcommand{\real}{\mathbb{R}}
\newcommand{\ncom}{\newcommand}
\renewcommand{\implies}{\Rightarrow}
\def\abs#1{\left\vert #1 \right\vert}
\def\set#1{\lbrace #1 \rbrace}
\newcommand{\res}{\operatorname{res}}
\newcommand{\sig}{\sigma}
\newcommand{\al}{\alpha}
\newcommand{\ome}{\omega}
\newcommand{\ep}{\epsilon}
 \ncom{\Del}{\Delta}
\newcommand{\zec}{Zeckendorf}
\newcommand{\Lunique}{unique $\cL$-representation property}
\newcommand{\Eunique}{unique $\cE$-representation property}
\newcommand{\fib}{Fibonacci}
\newcommand{\cL}{\mathcal{L}}
\newcommand{\clz}{$\cL$-\zec}
\newcommand{\Lz}{\clz}
\newcommand{\aclz}{ascending $\cL$-\zec}
\newcommand{\cE}{\mathcal{E}}
\newcommand{\Ez}{$\cE$-\zec}
\newcommand{\dver}{descending version}
\newcommand{\lex}{lexicographical}
\newcommand{\dord}{<_\text{d}}
\newcommand{\aord}{<_\text{a}}
\newcommand{\aordeq}{\le_\text{a}}
\newcommand{\aorcoll}{ascendingly-ordered}
\newcommand{\dorcoll}{descendingly-ordered}
\newcommand{\cf}{coefficient function}
\newcommand{\UI}{\mathbf{I}}
\newcommand{\OI}{\UI}
\newcommand{\ord}{\mathrm{ord}}
\newcommand{\zquote}[1]{\lq\lq #1\rq\rq}
\newcommand{\seq}[1]{\set{#1_k}_{k=1}^\infty}
\newcommand{\wt}{\widetilde}
\newcommand{\zml}{\zec\ multiplicity list}
\newcommand{\bbeta}{\bar\beta}
\newcommand{\EEunique}{unique $\cE$-representation property}
\newcommand{\Eb}{$\hbeta$-block}
\newcommand{\Ebb}{$\bbeta$-block}
\newcommand{\dbl}{$\delta$-block}
\newcommand{\hbeta}{\hat \beta}
\newcommand{\immsucc}{immediate successor}
\newcommand{\immpred}{immediate predecessor}
\newcommand{\resab}[1]{\res_{#1}}
\newcommand{\td}{\tilde}
\newcommand{\reS}{\ \mathrm{res}\ }
\newcommand{\rem}{\mathrm{rem}}
\newcommand{\Lb}{$L$-block}
\newcommand{\LLb}{\Eb}
\begin{document}

\begin{center}\bf
\Large The Weak Converse of Zeckendorf's Theorem \\
\rm\large Sungkon Chang
\end{center}

\begin{quote}
{\bf Abstract:}
By \zec's Theorem, every positive integer is uniquely written as a sum
of non-adjacent terms of the \fib\ sequence, and its converse states that
if a sequence in the positive integers has this property, it must be the \fib\ sequence. If we instead consider the problem of finding a monotone sequence with such a property, we call it the weak converse of \zec's theorem.
In this paper, we first introduce a generalization of \zec\ conditions, and
subsequently, \zec's theorems and their weak converses for the general \zec\ conditions. We also extend the generalization and results to the real numbers in the interval $(0,1)$, and to $p$-adic integers.

\end{quote}

\section{Introduction}\label{sec:intro}

\zec's Theorem \cite{zec}
states that each positive integer is expressed uniquely as a sum of distinct nonadjacent terms of the Fibonacci sequence $(1,2,3,5,\dots)$ where
we reset $(F_1,F_2)=(1,2)$.
Similar to the binary expansion, each positive integer can be
expressed as a sequence of $0$ and $1$ indicating whether
the Fibonacci term is involved or not.
For example, the natural number $100$
corresponds to the {\it\zec\ digits} $(1000010100)_Z$, meaning that
$ F_{10} +F_5+ F_3 = 89+ 8 + 3=100$.
\zec\ digits share the simplicity of representation with the binary expansion, but also they are quite curious in terms of the arithmetic operations, determining the $0$th digits, the partitions in \fib\ terms, and
the minimal summand property of \zec\ expansions; see
\cite{miller:minimality}, \cite{grabner}, \cite{kimberling},
\cite{fenwick}, and \cite{robbins}. One of the most striking features of \zec's Theorem is its converse and the questions it opens up.

\begin{theorem}[Daykin]\label{thm:daykin}
If a sequence $\set{Q_k}_{k=1}^\infty$ of positive integers
uniquely expresses each positive integer as a sum of its distinct non-adjacent terms, then it is the Fibonacci sequence.
\end{theorem}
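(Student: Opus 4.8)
The plan is to organize everything around the sets $A_N$ of positive integers expressible as a non-adjacent sum of terms from $Q_1,\dots,Q_N$. Three facts come for free from the hypothesis. (a) The $Q_k$ are pairwise distinct: if $Q_i=Q_j$ with $i\ne j$, the integer $Q_i$ has the two admissible representations $\{i\}$ and $\{j\}$. (b) The number of non-adjacent subsets of $\{1,\dots,N\}$ satisfies $a_N=a_{N-1}+a_{N-2}$ with $a_0=1,\ a_1=2$, hence equals $F_{N+1}$; since uniqueness makes the sum map injective and the empty subset contributes $0$, $|A_N|=F_{N+1}-1$. (c) Sorting a representation by whether or not it includes the top-indexed term $Q_N$ (and noting that if it does, it cannot also include $Q_{N-1}$) gives the disjoint decomposition $A_N=A_{N-1}\ \sqcup\ \bigl(Q_N+(A_{N-2}\cup\{0\})\bigr)$, the disjointness again coming from uniqueness.

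The goal is then to prove that each $A_N$ is an \emph{initial segment}, i.e.\ $A_N=\{1,2,\dots,F_{N+1}-1\}$ for all $N$. Granting this, the theorem follows by a one-line induction: if $A_{N-1}=\{1,\dots,F_N-1\}$ and $A_{N-2}=\{1,\dots,F_{N-1}-1\}$, then (c) reads $A_N=\{1,\dots,F_N-1\}\ \sqcup\ \{Q_N,\dots,Q_N+F_{N-1}-1\}$; disjointness forces $Q_N\geq F_N$, and the initial-segment property forces the two blocks to abut, so $Q_N=F_N$ (and $A_N=\{1,\dots,F_{N+1}-1\}$ because $F_N+F_{N-1}=F_{N+1}$). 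The cases $N=1,2$ are just $Q_1=1$ and $Q_2=2$. In fact the initial-segment statement is equivalent to $\{Q_k\}$ being strictly increasing: once monotonicity is known, the representation of $F_N$ cannot use an index $M>N$ — that would force $Q_M\leq F_N$ whereas monotonicity gives $Q_M>Q_N\geq F_N$ — so it uses index $N$, whence $Q_N\leq F_N$ and we are done.

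So the whole content is to exclude a ``hole'': assuming $A_{N-1}=\{1,\dots,F_N-1\}$ (hence $Q_i=F_i$ for $i<N$), show that $F_N$ is realized at index exactly $N$. Using $\min_k Q_k=1$ and that a multi-term sum of distinct terms is too large, one first sees that $F_N$'s representation is a singleton $\{M\}$ with $Q_M=F_N$; the task is to contradict $M>N$. Here I would lean on the rigidity of the already-determined terms: for $0\le c\le F_N-1$ the integer $c$ has a representation $R_c\subseteq\{1,\dots,N-1\}$ (its \zecrep, since $Q_i=F_i$ for $i<N$), and because $\max R_c\le N-1\le M-2$ the set $\{M\}\cup R_c$ is admissible, hence \emph{is} the representation of $F_N+c$. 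Consequently $F_N,F_N+1,\dots,2F_N-1$ are all absent from $A_j$ for every $j<M$, which forces $Q_j\geq 2F_N$ for $N\le j<M$ (each such $Q_j$ lies in $A_j$ but in neither $A_{N-1}$ nor that forbidden block). In particular $Q_N\geq 2F_N$, so $\max A_N\geq 2F_N+F_{N-1}-1=F_{N+2}-1$ while $|A_N|=F_{N+1}-1$: the hole has merely reappeared, twice as high. I expect the honest way to finish is to iterate this — producing indices $N<M_1<M_2<\cdots$ and ever larger unfillable blocks — until the cardinalities $|A_{M_i}|=F_{M_i+1}-1$ can no longer accommodate them, and \textbf{this hole-elimination (equivalently, the monotonicity of $\{Q_k\}$) is the step I expect to be the real obstacle}; everything else is bookkeeping with (c) and the identity $F_N+F_{N-2}+\cdots=F_{N+1}-1$.
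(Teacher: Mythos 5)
Your proposal is not a complete proof, and the missing piece is exactly the content of Daykin's theorem rather than a technical detail. What you establish rigorously --- $|A_N|=F_{N+1}-1$, the decomposition $A_N=A_{N-1}\sqcup\bigl(Q_N+(A_{N-2}\cup\{0\})\bigr)$, and the induction giving $Q_N=F_N$ \emph{once the initial-segment (equivalently monotonicity) property is known} --- reduces the full converse to the case of monotone sequences. But that reduced statement is the weak converse, which is the easy half (it is what this paper actually proves, in the proof of Theorem \ref{thm:integers}, Part 2(b), by a short induction comparing two increasing fundamental sequences; the paper does not prove Theorem \ref{thm:daykin} at all, it cites Daykin). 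The step you flag yourself --- ruling out that the value $F_N$ sits at an index $M>N$ --- is the entire difficulty of the full converse, and your sketch stops at showing the hole reappears ``twice as high,'' with the finish deferred to an unproved claim that iterating produces a cardinality contradiction.

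Moreover, the proposed cardinality iteration is doubtful as stated. After your first step one has $Q_M=F_N$, $Q_j\ge 2F_N$ for all $j\ge N$ with $j\ne M$, and the block $[F_N,2F_N-1]$ covered by $\{M\}\cup R_c$. But a single new term of value $2F_N$ at a suitable index covers $[2F_N,3F_N-1]$ exactly via $\{j_1\}\cup R_c$, and then $\{M,j_1\}\cup R_c$ covers $[3F_N,4F_N-1]$ exactly, and so on: scale by scale the counts can match, so no contradiction falls out of comparing $|A_{M_i}|=F_{M_i+1}-1$ with the size of the forbidden blocks alone. What actually kills the non-monotone configurations (as one sees already in the smallest case $Q_2=1$, $Q_1=2$, where one is forced into $Q_3+Q_1=Q_4+Q_2$-type coincidences) is that the adjacency constraint forces certain integers to acquire \emph{two} admissible representations, i.e., collisions --- and every index must carry a value whose unique representation is its own singleton, which is a constraint your bookkeeping never uses. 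So the finish requires either tracking these forced collisions or showing the displaced values must grow geometrically faster than $F_{K+1}$ before a counting argument can bite; neither is supplied, and without it the proof is incomplete.
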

This is called {\it the converse of \zec's Theorem}, and
we shall call the problem of finding monotone sequences rather than arbitrary sequences
{\it the weak converse of \zec's Theorem}.
In this paper, we introduce:
\begin{enumerate}
\item a general approach to
{\it \zec\ conditions}, generalizing the conditions introduced in \cite{mw};
\item \zec's theorem for a general \zec\ condition, which includes cases of linear recurrences with negative coefficients;
\item results on their weak converses, not only for
sequences in the positive integers, but also sequences in the real numbers and $p$-adic integers.
\end{enumerate}

A general \zec\ condition shall be properly introduced in Section \ref{sec:D-R}, and
in this section let us introduce another example to help the reader
be familiar with \zec\ conditions.
{\it The $N$th order Fibonacci sequence}
$\seq H$, whose name is coined in \cite{fraenkel}, is defined by
$H_n = H_{n-1}+\cdots + H_{n-N}$ for all $n>N$
and $H_n = 2^{n-1}$ for all $1\le n\le N$, and
\zec's Theorem for the $N$th order \fib\ sequence
states that each positive integer is expressed uniquely as a sum of distinct terms of the $N$th order Fibonacci sequence where no $N$ consecutive terms are used \cite{bruckman},
\cite{mw}.
We may call the restriction of not allowing $N$ consecutive terms {\it the $N$th order \zec\ condition}.
The weak converse for this \zec\ condition can be stated as follows:
{\it The $N$th order \fib\ sequence is the only increasing sequence that represents $\nat$ uniquely under the $N$th order \zec\ condition}, and
a proof is found in \cite{bruckman}.

Another interesting direction that the converse theorem opens up is
investigating
the unique existence of a sequence when a \zec\ condition and a set of numbers are given.
We say that a set $X$ of numbers is {\it represented by a sequence
$\seq Q$ uniquely under a \zec\ condition}
if each member of $X$ is uniquely expressed as a sum of terms of the sequence that satisfies the \zec\ condition, and each sum of terms of the sequence that satisfies the \zec\ condition is a member of $X$.
For example, we may ask whether
the set of positive odd integers can be represented by an increasing sequence under the second order \zec\ condition, and if so,
whether such a sequence uniquely exists, which is {\it the weak converse for the positive odd integers under the second order \zec\ condition}.
Let us introduce
another representative example of this direction of research.
Let $X$ be the open interval $(0,1)$ of real numbers, and
ask ourselves whether the interval
can be represented by a decreasing sequence of positive real numbers uniquely under the second order \zec\ condition, and if so,
does the weak converse for the interval under the second order \zec\ condition hold?
We shall provide answers to these two questions in Section \ref{sec:D-R} along with our main results which are presented in a more general setting.
The remainder of the paper is organized as follows.
In Section \ref{sec:D}, general definitions of \zec\ conditions are introduced along with results on its formulation in terms of blocks.
Introduced in Section \ref{sec:results} are main results
on \zec's Theorem and their weak converses
for  sets of numbers in $\nat$, the interval $(0,1)$ of real numbers, and
$p$-adic integers in 
$\zz_p$.
Examples are instrumental for properly understanding
the general concepts of \zec\ conditions, and they are briefly introduced in Section \ref{sec:D-R}.
However, it is necessary to discuss more examples that are interesting, in order to present the full extent of the definition, and
they are introduced in Sections \ref{sec:non-standard} and \ref{sec:examples}.
The main results introduced in Sections \ref{sec:D} and \ref{sec:results}
are proved in Section \ref{sec:proofs}.

\subsection*{Acknowledgement}
We thank Stephen J.\ Miller, Arturas Dubickas for answering our questions,
and thank the referees for carefully reading our manuscript.
We also thank Timothy Eller for inspiring questions and conversations that
initiated this project.

\section{Definitions and results}\label{sec:D-R}

\subsection{Definitions}\label{sec:D}
In this paper, $R$ will denote one of the following sets of numbers:
the natural numbers
$\nat$, the open interval $\OI:=(0,1)$ of real numbers, and the $p$-adic integers $\zz_p$.
A sequence  is usually denoted by $\set{a_n}_{n\ge 1}$.
In this paper, a sequence of numbers in $R$ is identified with a list of numbers
in the infinite product $\prod_{j=1}^\infty R$.
We usually denote them by capital letters such as $Q$, and their terms are denoted by $Q_k$ for $k=1,2 ,3\cdots$.
For example, if $Q_k=k$ for $k\ge 1$, then $Q=(1,2,3,\cdots)$.
Given a function $\ep: \nat \to \nat\cup\set{0}$ and a sequence $Q$,
we denote $\ep(k)$ by $\ep_k$, and define
$\sum \ep Q$ to be the formal sum $\sum_{k=1}^\infty \ep_k Q_k$.
In this context, $\ep$ is called a {\it\cf}.
We also use the list notation to present the values of $\ep$, i.e., $\ep=(\ep_1,\ep_2,\dots)$, and the bar notation $\bar a$ denotes the repeating entries,
e.g., $\ep=(1,2,3,\bar 0)$ meaning that $\ep_k=0$ for all $k>3$.
If there is an index $M$ such that $\ep_k=0$ for all $k >M$, $\ep$ is said to
{\it have a finite support}, and we say,
a \cf\ $\ep$ is {\it supported on a subset of indices $A$}
if $\ep_k=0$ for all $k\not \in A$.
Note here that given a \cf\ $\ep$, such an index subset $A$ is not uniquely determined, and it is a subset we assign to a \cf.
Let $\beta^i$ be the \cf\ such that $\beta^i_k=0$ for all $k\ne i$ and
$\beta_i^i=1$, and call it {\it the $i$th basis \cf}.

If a subset of indices $A$ consists of consecutive indices $\set{a,a+1,\dots,a+n}$, we call it an interval of indices.
Given an interval $J$ of indices and a \cf\ $\delta$, let both $\delta \reS J$ and $\resab{J}(\delta)$ denote $\sum_{k\in J} \delta_k \beta^k$, i.e., the restriction of $\delta$ on the indices
in $J$.
For example, $\resab{[a,b)}(\delta)=\sum_{k=a}^{b-1} \delta_k\beta^k$.
For convenience, let us denote
$\resab{[1,M]}{\delta}$ by $\delta \reS M$,
and the relationship
$\resab{J}{\delta} =\resab{J}{\mu}$
by $\delta \equiv \mu \reS J$ and also
$\delta \equiv \mu \reS [1,M]$
by $\delta \equiv \mu \reS M$.

Let us consider \lex\ orders on the set of \cf s.
Given two \cf s $\ep$ and $\ep'$, we define
{\it the descending \lex\ order} as follows.
If there is a smallest positive integer $k$ such that
$\ep_j = \ep'_j$ for all $j<k$ and 
$\ep_k< \ep'_k$, then we denote the property by $\ep \dord \ep'$.
For example, if $\ep=(1,2,10,5,\dots)$ and $\ep'=(1,3,1,10,\dots)$,
then $\ep \dord \ep'$ since $\ep_2 < \ep'_2$ and $\ep_1=\ep'_1$.
Let us point out that 
 the \lex\ order is defined on the set of \cf s, and it does not mean that 
 the values of a \cf\ in the set form a decreasing sequence.
For the representation of the real numbers in the open interval $\UI$, we shall use the descending \lex\ order on the set of \cf s.
Given two \cf s $\mu$ and $\mu'$ with finite support, we define
{\it the ascending \lex\ order} as follows.
If there is a largest positive integer $k$ such that
$\mu_j = \mu'_j$ for all $j>k$ and 
$\mu_k < \mu'_k$, then we denote the property by $\mu \aord \mu'$.
For example, if $\mu=(1,2,10,3,7)$ and $\mu'=(1,3,1,4,7)$,
then $\mu \dord \mu'$ since $\mu_4 < \mu'_4$ and $\mu_5=\mu'_5$.
As in the earlier case,
it does not mean that 
 the values of a \cf\ in the set form an increasing sequence.
For the representation of the positive integers we shall use the ascending \lex\ order on the set of \cf s with finite support.

Given a set of numbers $R$ listed above, we define
{\it
a collection $\cE$ of \cf s under a \lex\ order} to be
a set of \cf s ordered by the same \lex\ order that contains the zero \cf\ and all basis \cf s $\beta^i$.
We  call the set {\it an \aorcoll\ collection of   \cf s} if it is under
the ascending \lex\ order, and {\it a \dorcoll\ collection of   \cf s}
 if it is under
the descending \lex\ order.
A member of $\cE$ is called an $\cE$-\cf, and a \cf\ is said to {\it satisfy
the $\cE$-condition} if it is a member of $\cE$.
For example, if $\cE$ is the collection of \cf s $\mu$ with finite support such that $\mu_k$ is either $0$ or $1$ for all $k\ge 1$ and the list $\mu$ does not have two consecutive entries of $1$, then $\cE$ is the classical \zec\ condition used for writing positive integers as a sum of \fib\ terms.

Let $\cE$ be a collection of \cf s under the ascending or descending \lex\ order.
Let $\delta$ be a \cf\ in $\cE$, and we introduce the following terminology with respect to its \lex\ order.
The smallest \cf\ in $\cE$ that is greater than $\delta$,
if (uniquely) exists, is called {\it the \immsucc\ of $\delta$ in $\cE$}, and we denote it by $\td\delta$.
The largest \cf\ in $\cE$ that is less than $\delta$,
if (uniquely) exists, is called {\it the \immpred\ of $\delta$ in $\cE$}, and we denote it by $\hat\delta$.

Let us introduce an order notation that will be instrumental throughout the paper,
and it is intended to reflect the magnitude of a number
expressed in terms of \cf s.
Let $\ep$ be a non-zero   \cf\  of a collection under the descending \lex\ order, which will be used for the real numbers in $\OI$.
The smallest index $n$ such that $\ep_n\ne 0$ is called the order of $\ep$, denoted by $\ord(\ep)$.
If $\ep=0$, then we define $\ord(\ep)=\infty$.
For a non-zero  function $\mu$ of a collection under the ascending \lex\ order that has finite support,
the largest index $n$ such that $\mu_n \ne 0$ is called the order of $\mu$,
denoted by $\ord(\mu)$. If $\mu=0$, we define
$\ord(\mu)=0$.

Let us further introduce
the notion of {\it \zec\ collections of  \cf s under the ascending \lex\ order}, which will be used for positive integers.
By definition, a collection of \cf s contains all the basis \cf s, i.e.,  $\beta^{n-1}\in\cE$ for $n\ge 2$, and hence, the \immpred\ $\hbeta^n$,  if exists,
 has
a non-zero value at index $n-1$, i.e., $\ord(\hbeta^n)=n-1$.

\begin{deF}\label{def:zec-N}
Let $\cE$ be an \aorcoll\ collection of  \cf s with finite support.
The collection is called \zec\ for positive integers if it satisfies the following:
\begin{enumerate}
\item For each $\mu\in\cE$ there are at most finitely many \cf s that are less than $\mu$.

\item
Given $\mu\in \cE$, if
its \immsucc\ $\td\mu$ is not $ \beta^1+\mu $,
then there is an index $n\ge 2$
such that $ \mu \equiv \hbeta^n \reS [1,n)$ and $\td\mu = \beta^n
+ \resab{[n,\infty)}(\mu)$.

\end{enumerate}
\end{deF}
 Definition \ref{def:zec-N}, Part 2 says that  each \cf\ of a \zec\ collection
$\cE$ for positive integers
has a (unique) \immsucc\ in $\cE$, and 
Definition \ref{def:zec-N}, Part 1 implies that
 it has a (unique) \immpred\ as well. Let us use the following lemma to 
 explain this property.
\begin{lemma}\label{lem:seq}
Let $\cE$ be a \zec\ collection  for positive integers.
Let $\tau^0$ be the zero \cf, and let $\tau^{n+1}$ be the \immsucc\ of $\tau^n$ in   $\cE$   for each $n\ge 0$.  Then, $\tau^n$ is the \immpred\ of $\tau^{n+1}$ in $\cE$ for each $n\ge 0$, and $\set{\tau^n : n \ge 0} = \cE$.
\end{lemma}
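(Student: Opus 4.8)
The plan is to recognize $(\cE,\aord)$ as a well-ordered set with least element the zero \cf\ $\tau^0$, and then to show that iterating \immsucc s starting from $\tau^0$ enumerates $\cE$ in increasing order; the two assertions of the lemma are exactly the two halves of this picture.

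First I would record the basics. The order $\aord$ is total on $\cE$, since any two distinct \cf s of finite support differ at a largest index and that index decides the comparison; and $\tau^0=0$ is the least element of $\cE$, because for non-zero $\mu$ the largest index $k$ with $\mu_k\ne0$ has $\mu_k>0$, so $0\aord\mu$. Then, from Part~1 of Definition~\ref{def:zec-N} — each $\mu\in\cE$ has only finitely many $\aord$-predecessors in $\cE$ — I would deduce that $\cE$ is well-ordered: for non-empty $T\subseteq\cE$ pick $t\in T$; then $T\cap\{\nu\in\cE:\nu\aordeq t\}$ is a finite non-empty subset of a totally ordered set, so it has a least element, and one checks at once that this element is least in all of $T$. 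In particular every $\mu\in\cE$ has an \immsucc, namely the least element of $\{\nu\in\cE:\mu\aord\nu\}$, a set that is non-empty because $\beta^i\in\cE$ and $\mu\aord\beta^i$ whenever $i>\ord(\mu)$; this makes the recursive definition of $(\tau^n)$ legitimate. Likewise every non-zero element of $\cE$ has an \immpred, the greatest element of its finite, non-empty predecessor set.

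The \immpred\ claim now follows directly: $\tau^n\aord\tau^{n+1}$ by construction, and if some $\nu\in\cE$ satisfied $\tau^n\aord\nu\aord\tau^{n+1}$ it would contradict $\tau^{n+1}$ being the \emph{least} \cf\ of $\cE$ exceeding $\tau^n$; hence nothing in $\cE$ lies strictly between $\tau^n$ and $\tau^{n+1}$, which says exactly that $\tau^n$ is the \immpred\ of $\tau^{n+1}$. For $\{\tau^n:n\ge0\}=\cE$, the inclusion $\subseteq$ is clear. For the reverse inclusion I would first prove, by induction on $n$, that $\{\tau^0,\dots,\tau^n\}=\{\nu\in\cE:\nu\aordeq\tau^n\}$: the base case is $\tau^0=\min\cE$, and the inductive step again uses that no element of $\cE$ lies strictly between $\tau^n$ and $\tau^{n+1}$. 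Then, given any $\mu\in\cE$, the strictly increasing sequence $(\tau^n)$ cannot lie entirely below $\mu$ — otherwise $\mu$ would have infinitely many predecessors in $\cE$, contradicting Part~1 — so $\mu\aordeq\tau^n$ for some $n$, and the induction gives $\mu\in\{\tau^0,\dots,\tau^n\}$.

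I do not expect a genuine difficulty. The only points requiring care are extracting well-foundedness from the mere finiteness asserted in Part~1 (rather than assuming it outright), and invoking the \immsucc\ and \immpred\ only after their existence has been secured, so that the recursion defining $(\tau^n)$ is meaningful. It is also worth remarking that Part~2 of Definition~\ref{def:zec-N} is not needed for this particular lemma.
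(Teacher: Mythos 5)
Your proof is correct, and it uses the same two essential ingredients as the paper (the finiteness in Definition~\ref{def:zec-N}, Part~1, and the fact that nothing in $\cE$ can lie strictly between $\tau^n$ and $\tau^{n+1}$), but it is organized differently in two respects. First, you derive the existence of the \immsucc\ of every $\mu\in\cE$ from Part~1 alone, by extracting well-ordering from the finiteness of predecessor sets and noting that $\set{\nu\in\cE : \mu\aord\nu}$ is non-empty because $\beta^i\in\cE$ for $i>\ord(\mu)$; the paper instead reads the existence of \immsucc s off Part~2 (as stated in the remark preceding the lemma) and only uses Part~1 for \immpred s and finiteness, so your observation that Part~2 is not needed is accurate under your reading but departs from how the paper apportions the definition. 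Second, for $\set{\tau^n : n\ge 0}=\cE$ you argue via an initial-segment induction ($\set{\tau^0,\dots,\tau^n}=\set{\nu\in\cE:\nu\aordeq\tau^n}$) combined with a cofinality step (the strictly increasing $\tau^n$ cannot all stay below $\mu$, else $\mu$ would have infinitely many predecessors), whereas the paper fixes $\mu$, takes the largest $\tau^m$ in the finite set $S\cap T$ with $T=\set{\al\in\cE:\al\aordeq\mu}$, and rules out any $\gamma$ with $\tau^m\aord\gamma\aordeq\mu$ by the dichotomy $\tau^{m+1}\aordeq\gamma$ or $\gamma\aord\tau^{m+1}$. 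The two routes are of comparable length; yours is somewhat more self-contained (successor existence and totality of the order are proved rather than presupposed), while the paper's is more direct, working with a single finite set and avoiding the induction on initial segments.
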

\begin{proof}
Let $\tau^0$ be the zero \cf, and let $\tau^{n+1}$ be the \immsucc\ of $\tau^n$ for each $n\ge 0$.  Suppose that there are $\delta\in\cE$
and an integer $n\ge 0$ such that  $\tau^n \aord \delta \aord
 \tau^{n+1}$.
 Then, it contradicts that $\tau^{n+1}$ is the \immsucc\ of $\tau^n$.
 Hence, this proves that $\tau^n$ is the \immpred\ of $\tau^{n+1}$.
 
Notice that  
$S:=\set{\tau^n : n \ge 0}$ is a subset of $\cE$. 
Let $\mu$ be a \cf\ in $\cE$ that is greater than $\tau^0$, and let us show $\mu\in S$.
The subset $T:=\set{\al \in \cE : \al \aordeq \mu}$ is a finite set by Definition \ref{def:zec-N}, Part 1.
By the definition of the ascending \lex\ order, $\tau^0 \in T$, and
there is a largest element $\tau^m$    of the nonempty finite subset $S\cap T$.
It suffices to show that $T':=\set{\al \in   T : \tau^m \aord \al }$ is empty, i.e., $\tau^m$ is the largest element of $T$, which is $ \mu$.
Suppose that $T'$ contains an element  $\gamma$.
The collection $\cE$ is totally ordered under the ascending \lex\ order, i.e.,
either $\tau^{m+1} \aordeq \gamma$ or $\gamma \aord \tau^{m+1}$ is true.
If $ \tau^{m+1} \aordeq \gamma$, then $\tau^m < \tau^{m+1} \aordeq \gamma
\aordeq \mu$, which 
contradicts the choice of $\tau^m$.
If $\tau^m\aord \gamma \aord \tau^{m+1}$, then it contradicts that $ \tau^{m+1}$ is 
the \immsucc\ of $\tau^m$.
The implications of the above two cases contradict the existence of $\gamma\in\cE$ under the \lex\ order, and hence, we prove that $\tau^m =\mu\in S$.
\end{proof}

Before we introduce examples, let us extend the definition to $p$-adic integers.
A \cf\ $\mu$ is called the limit of a sequence of \cf s $\mu^k$ with finite support
for $k\ge 1$
if there is an increasing sequence of indices $M_k\ge \ord(\mu^k)$ for $k\ge 1$ such that
$\mu \equiv \mu^k \reS M_k $ for all indices $k\ge 1$.
A collection of \cf s $\cE$ is called {\it a \zec\ collection
for $p$-adic integers}
if it has a \zec\ sub-collection $\cE_0$ for positive integers
such that $ \cE$ is the set of \cf s that are the limits of sequences of \cf s in $\cE_0$, and
$\cE$ is also called the completion of $\cE_0$.

\begin{example}\label{exm:k-1}\rm
Let $\cE_0$ be the \aorcoll\ collection of  
\cf s $\mu$ with finite support
such that $\mu_k\le k$ and $\mu_{k+1}=k+1$ implies $\mu_k=0$ for all $k\ge 1$.
Then, $\cE_0$ is \zec\ for positive integers.
For example, $\hbeta^7=(0,2,0,4,0,6,\bar 0)$, and
the \immsucc\ of $\hbeta^7 + 3\beta^7 + 2\beta^8$
is $4\beta^7 + 2\beta^8$.
If $\cE$ is the completion of $\cE_0$, then
$ \sum_{k=1}^\infty (1+2k)\beta^{1+2k}$ and $ \sum_{k=1}^\infty (3k)\beta^{3k}$ are examples of \cf s in $\cE$.
\end{example}

\begin{example}\label{exm:zero-coeff}\rm
Let $B=\set{(a_1,a_2,a_3)\in\zz^3 : 0\le a_k \le 1\ \text{for all }k}
-\set{(1,1,0)}$,
and let $\cE$ be the \aorcoll\ collection of  \cf s $\ep$ with finite support
generated by concatenating some blocks in $B$.
Then, $\cE$ is a \zec\ collection for positive integers, 
whose \immpred s are given by $\hbeta^n=
\sum_{k=1}^{n-1} \beta^k=(1,1,\dots,1,1,\bar 0)$ for $n\not\equiv 0\mod 3$,
and $\hbeta^{3n}=\sum_{k=1}^{3(n-1)} \beta^k
+\beta^{3n-1}=(1,\dots,1,0,1,\bar 0)$ for $n\ge 1$.
\end{example}

Definition \ref{def:zec-N} accomplishes
a concise description of \zec\ conditions in terms of properties the collection must satisfy, and when it is easy to determine the \immsucc\ of each member as in Example \ref{exm:k-1}, it is useful for determining whether a collection is \zec\ or not.
However, it turns out that a \zec\ collection is completely determined by the subset $\set{\hbeta^n : n\ge 2}$, and it is not so simple
to see this fact, i.e., to find the \immsucc\ of $\delta$ when a \cf\ $\delta$ and $\hbeta^n$ for $n\ge 2$ are given.
Theorem \ref{thm:block-def-N} and Corollary \ref{cor:generate-zec} below will make this clear,
and the proof will be given in Section \ref{sec:proofs}.
Theorem \ref{thm:block-def-N} and Corollary \ref{cor:generate-zec} will also show that
Definition \ref{def:zec-N}
generalizes the definition introduced in \cite[Definition 1.1]{mw}.
 
\begin{deF}\label{def:E-block-N}
Let $\set{\delta^n : n\ge 2}$ be a set of \cf s  under the ascending \lex\ order such that $\ord(\delta^n) = n-1$ for all $n\ge 2$. 
A \cf\ $\zeta$   is called
{\it a proper $\delta$-block at index $n$}
if there is an index $1\le i\le n$ such that $0\le \zeta_i< \delta^{n+1}_i$,
$\zeta \equiv \delta^{n+1} \reS (i,n]$,
and $\zeta_s =0$ for all $s\not\in [i,n]$.
We call the interval of indices $[i,n]:=\set{i,\dots,n}$
{\it the support of
a proper $\delta$-block at index $n$}.
The \cf\ $\delta^n$ is  called {\it the maximal $\delta$-block at index $n-1$}, and we
call the interval of indices $[1,n)$ {\it the support of $\delta^n$}.

\end{deF}
Note that the zero \cf\ is declared to be a proper \dbl\ at index $n$ 
for any integer $n\ge 1$, for which the support is $[n,n]$, while  the support of a nonzero proper \dbl\ is uniquely determined.
The basis \cf\ $\beta^n$  is a simple example of   nonzero $\delta$-blocks at index $n \ge 1$, which may or may not be proper. Consider the collection $\cE_0$ defined in Example \ref{exm:k-1}.
Then, $\zeta=(0,0,0,0,5)$ is an example of  proper $\hbeta$-blocks, and its
 support is $[3,5]$ since $i=3$ is the index such that 
 $\zeta_i < \hbeta_i=3$ and $\zeta_k = \hbeta_k$ for $k=4,5$. 

Our main interest for positive integers is an \aorcoll\ collection $\cE$ of \cf s with finite support such that 
the \immpred s $\hbeta^n$ exist for each $n\ge 1$, and in general, 
 a $\hbeta$-block  $\zeta$ at index $n$ is not required to be a member of $\cE$.  
If $\cE$ is \zec, then by Theorem \ref{thm:block-def-N}, Part 2, a proper \Eb\ is a member of $\cE$.

\begin{theorem}\label{thm:block-def-N}
Let $\cE$ be an \aorcoll\ collection of   \cf s with finite support  such that 
the \immpred s $\hbeta^n$ exist for each $n\ge 2$.
\begin{enumerate}
\item
The collection $\cE$ is \zec\ if and only if
all of the following are satisfied:
\begin{enumerate}
\item
For each $\mu\in\cE$, there are a positive integer $M$, a unique \Eb\ $\zeta^1$ with support $[i_1,n_1]$, and unique proper \Eb s $\zeta^m$ with support $[i_m,n_m]$ for $2\le m\le M$ (if $M\ge 2$)
such that $i_1=1$, $n_m+1=i_{m+1}$ for all $1\le m\le M-1$, and
$\mu = \sum_{m=1}^M \zeta^m$.
We call the expression the \Eb\ decomposition.

\item Given $\mu\in\cE$, if $\mu = \sum_{m=1}^M \zeta^m$ is the
\Eb\ decomposition, then $\td\mu = \beta^1 + \mu$ if $\zeta^1$ is not maximal, and $\td\mu = \beta^n +\sum_{m=2}^M \zeta^m$ if $\zeta^1 = \hbeta^n$ for some $n\ge 2$.

\end{enumerate}
\item Let $\cE$ be \zec,
and let
$\zeta^m$ for $m=1,\dots,M$ be a sequence of \Eb s with
disjoint supports $[i_m,n_m]$ such that $\zeta^m$ are proper for
$m\ge 2$ (if $M\ge 2$).
Then, the \cf\ $ \sum_{m=1}^M \zeta^m$ is a member
of $\cE$.
\end{enumerate}

\end{theorem}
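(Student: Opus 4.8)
My plan rests on Lemma~\ref{lem:seq}, which presents a \zec\ collection $\cE$ as a strictly increasing chain $\tau^{0}=0\prec\tau^{1}\prec\tau^{2}\prec\cdots$ in the ascending lexicographic order $\prec$, with each $\tau^{n+1}$ the immediate successor of $\tau^{n}$; I also use the elementary facts that every \cf\ $\mu$ of finite support satisfies $\mu\prec\beta^{\ord(\mu)+1}$, that consequently every $\mu\in\cE$ satisfies $\mu\preceq\hbeta^{\ord(\mu)+1}$ (as $\hbeta^{\ord(\mu)+1}$ is the largest element of $\cE$ below $\beta^{\ord(\mu)+1}$), and that — by inspection of Definition~\ref{def:E-block-N} — any concatenation $\sum_{m=1}^{M}\zeta^{m}$ of $\cE$-blocks of order $k$ is again $\preceq\hbeta^{k+1}$, because the topmost block pins the top digits and forces the order-$k$ comparison with $\hbeta^{k+1}$. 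Write $\cE^{*}$ for the set of \cf s of the form $\sum_{m=1}^{M}\zeta^{m}$ with $i_{1}=1$, $n_{m}+1=i_{m+1}$, $\zeta^{1}$ an $\cE$-block, and $\zeta^{m}$ proper for $m\ge2$; Part~1(a) asserts $\cE\subseteq\cE^{*}$ with uniqueness, and Part~2 (after padding gaps with zero blocks so the supports become consecutive) asserts $\cE^{*}\subseteq\cE$.

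The core is a single induction on $n$ establishing three assertions in lock-step: $\tau^{n}$ has a unique $\cE$-block decomposition; formula~(b) holds with $\mu=\tau^{n}$; and every element of $\cE^{*}$ lexicographically at most $\tau^{n}$ is one of $\tau^{0},\dots,\tau^{n}$. The base case $\tau^{0}=0$ (the zero block, with successor $\beta^{1}$, the least nonzero \cf\ of $\cE$) is immediate. For the step, write $\tau^{n}=\sum_{m=1}^{M}\zeta^{m}$ with consecutive supports $[i_{m},n_{m}]$ and feed this into Definition~\ref{def:zec-N}(2): $\tau^{n+1}$ is either $\beta^{1}+\tau^{n}$ or a carry value $\beta^{n'}+\resab{[n',\infty)}(\tau^{n})$ with $\tau^{n}\equiv\hbeta^{n'}\reS[1,n')$, $n'\ge2$. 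The decisive observation is that the alternative is governed by whether $\zeta^{1}$ is maximal. If $\zeta^{1}$ is not maximal, then $\zeta^{1}+\beta^{1}$ is again an $\cE$-block on $[1,n_{1}]$, so $\beta^{1}+\tau^{n}\in\cE^{*}$, and it is the smallest element of $\cE^{*}$ exceeding $\tau^{n}$ (any \cf\ between the two agrees with $\tau^{n}$ off index~$1$); granting that this $\cE^{*}$-element lies in $\cE$ — the interlocking point discussed below — it equals $\tau^{n+1}$, matching~(b). If $\zeta^{1}=\hbeta^{n_{1}+1}$ is maximal, then $\beta^{1}+\hbeta^{n_{1}+1}$ lies strictly between the consecutive elements $\hbeta^{n_{1}+1}\prec\beta^{n_{1}+1}$ of $\cE$, so $\beta^{1}+\tau^{n}\notin\cE$ and we are in the carry case; since $\tau^{n}\equiv\hbeta^{n_{1}+1}\reS[1,n_{1}+1)$ with $\resab{[n_{1}+1,\infty)}(\tau^{n})=\sum_{m\ge2}\zeta^{m}$, the carry value $\beta^{n_{1}+1}+\sum_{m\ge2}\zeta^{m}$ is well defined, lies in $\cE^{*}$ (carry cascade), and is the smallest element of $\cE^{*}$ exceeding $\tau^{n}$, hence equals $\tau^{n+1}$, again matching~(b). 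In both cases one reads off the block decomposition of $\tau^{n+1}$ from that of $\tau^{n}$ — replace $\zeta^{1}$ by $\zeta^{1}+\beta^{1}$; or by zero blocks on $[1,n_{1}]$ with $\beta^{n_{1}+1}$ absorbed into $\zeta^{2}$, re-absorbed into $\zeta^{3},\dots$ whenever a block overflows — and uniqueness for this (or for any $\mu\in\cE$) follows from the right-to-left greedy recovery licensed by $\mu\preceq\hbeta^{\ord(\mu)+1}$: the top block is forced (it is $\hbeta^{\ord(\mu)+1}$ if $\mu$ equals it, otherwise the proper block on $[i,\ord(\mu)]$ with $i$ the largest index of disagreement between $\mu$ and $\hbeta^{\ord(\mu)+1}$), and one recurses. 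Running this induction proves the forward implication of Part~1 and, via its third clause, also $\cE^{*}\subseteq\cE$, which is Part~2.

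The converse implication of Part~1 is a direct verification of Definition~\ref{def:zec-N} from (a) and (b). Part~(1) follows from (a): any $\alpha\in\cE$ with $\alpha\prec\mu$ has $\ord(\alpha)\le\ord(\mu)$, so its unique $\cE$-block decomposition uses blocks supported in $[1,\ord(\mu)]$, and there are only finitely many such concatenations. Part~(2) follows because, for $\mu=\sum_{m=1}^{M}\zeta^{m}\in\cE$, formula~(b) exhibits $\td\mu$ as $\beta^{1}+\mu$ when $\zeta^{1}$ is not maximal, and as $\beta^{n}+\sum_{m\ge2}\zeta^{m}$ when $\zeta^{1}=\hbeta^{n}$; in the latter case $\mu$ agrees with $\zeta^{1}=\hbeta^{n}$ on $[1,n_{1}]=[1,n)$, so $\mu\equiv\hbeta^{n}\reS[1,n)$, and $\sum_{m\ge2}\zeta^{m}=\resab{[n,\infty)}(\mu)$, so $\td\mu$ has precisely the shape Definition~\ref{def:zec-N}(2) demands.

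I expect the carry cascade to be the principal obstacle. One must verify that whenever the first block is the maximal block $\hbeta^{n_{1}+1}$, the \cf\ $\beta^{n_{1}+1}+\sum_{m\ge2}\zeta^{m}$ is again a legitimate concatenation of $\cE$-blocks — the carried term may be absorbed into $\zeta^{2}$, may promote $\zeta^{2}$ to a maximal block and propagate further, or may leave zero blocks behind on $[1,n_{1}]$ — and that the resulting \cf\ is exactly the lexicographically next element of $\cE^{*}$, so that nothing of $\cE^{*}$ (hence, once the third clause is in hand, nothing of $\cE$) can be slipped in below it. The second, more structural, difficulty is that the forward implication of Part~1 and Part~2 genuinely interlock: the identification $\beta^{1}+\tau^{n}=\tau^{n+1}$ in the non-maximal case needs ``$\beta^{1}+\tau^{n}\in\cE$'', which is an instance of Part~2, so the three clauses of the induction must be organized to be proved simultaneously rather than one after another. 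Making the cascade bookkeeping precise and threading this single induction so that the two halves of the equality $\cE=\cE^{*}$ stay in step is where the real work lies.
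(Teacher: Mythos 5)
Your architecture is essentially the paper's: your $\cE^{*}$ is the collection $E$ generated by block concatenations (Definition \ref{def:generate-zec}), your successor rule analysis is Lemma \ref{lem:E-unique}, Part 2, and your enumeration of $\cE$ as the chain $\tau^{0},\tau^{1},\dots$ is Lemma \ref{lem:seq} combined with Proposition \ref{prop:E-cE-rule} and Lemma \ref{lem:E-cE-rule}. But the step you flag as ``the interlocking point'' is not mere bookkeeping; it is a genuine gap, and the simultaneous induction you propose cannot close it. In the induction step you must decide, from the block decomposition of $\tau^{n}$, which of the two alternatives of Definition \ref{def:zec-N}, Part 2 gives $\tau^{n+1}$. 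When $\zeta^{1}$ is proper you need $\beta^{1}+\tau^{n}\in\cE$ (equivalently, you must rule out that the successor is a carry $\beta^{n'}+\resab{[n',\infty)}(\tau^{n})$ even though $\tau^{n}\equiv\hbeta^{n'}\reS[1,n')$ may hold for some $n'$); when $\zeta^{1}=\hbeta^{n_{1}+1}$ you need both $\beta^{1}+\tau^{n}\notin\cE$ (your justification only shows $\beta^{1}+\hbeta^{n_{1}+1}$ lies between $\hbeta^{n_{1}+1}$ and $\beta^{n_{1}+1}$, which does not address $\beta^{1}+\tau^{n}$ once $\tau^{n}$ has higher blocks) and that the carry occurs at exactly $n'=n_{1}+1$, which Definition \ref{def:zec-N} does not assert (the carry index need not be unique a priori). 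All of these facts concern \cf s lying strictly \emph{above} $\tau^{n}$ in the ascending order, whereas your third induction clause only controls elements of $\cE^{*}$ that are $\aordeq\tau^{n}$; so the induction hypotheses you set up can never deliver them, no matter how the three clauses are threaded.

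The paper closes precisely this circle with Lemma \ref{lem:decomposition} and Corollary \ref{cor:decomposition}: for any member of $\cE$ of the form $\al+\rho$, one follows the successor chain \emph{forward} inside $\cE$, uses the finiteness axiom (Definition \ref{def:zec-N}, Part 1) to locate the first moment the part of the \cf\ above $\ord(\al)$ changes, and reads off from the carry rule that $\rho$ decomposes into proper \Eb s. It is this forward-looking decomposability of tails that, combined with uniqueness of block decompositions, forces ``first block proper $\Rightarrow$ successor is $\beta^{1}+\mu$'' and ``first block maximal $\Rightarrow$ successor is the carry at $n_{1}+1$'' (this is the content of Proposition \ref{prop:E-cE-rule}). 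Your proposal contains no substitute for this lemma, and without it the central case analysis of your induction step, and hence the forward implication of Part 1 together with Part 2, remains unproved. The remaining ingredients of your sketch (uniqueness of decompositions by top-down greedy recovery, the finiteness argument for the converse direction, padding gaps with zero blocks in Part 2) are fine and match the paper.
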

If $\cE$ is \zec, then for convenience we may write the \Eb\ decomposition as
$\ep = \sum_{m=1}^\infty \zeta^m$ where
all sufficiently large \Eb s are zero \cf s.
Let us introduce some examples.
The collection $\cE$ for the classical \zec\ condition is
a \zec\ collection, e.g.,
$\mu=(0,1,0,1,0,0,1,\bar 0)$
is decomposed into non-zero \Eb s $(0,1,0,1,\bar 0) + \beta^7$ where
$\zeta^1=(0,1,0,1,\bar 0)$ is the maximal \Eb\ at index $4$,
and $\td\mu=\beta^5 + \beta^7$.
The \cf s $\ep:=\sum_{k=1}^\infty\beta^{n+2k}$ for $n\ge 1$ are members of the completion $\bar\cE$.
For the completion of $\cE$,
an infinite sum of proper \Eb s with disjoint supports is a member of $\cE$, but some members of $\cE$ such as $\ep$ defined above may not be written as an infinite sum of proper \Eb s with disjoint supports.

Another important application of Theorem \ref{thm:block-def-N} is constructing a \zec\ collection 
with the \immpred s $\hbeta^n$. 
\begin{deF}\label{def:generate-zec}
Let $\set{\delta^n : n\ge 2}$ be a set of \cf s  under the ascending \lex\ order such that $\ord(\delta^n) = n-1$ for all $n\ge 2$. 
 {\it The  \aorcoll\ collection $\cE$ of   \cf s determined  by $\set{\delta^n : n\ge 2}$} is defined to be the \aorcoll\ collection $\cE$ of   \cf s consisting of
 $\delta$-block decompositions $ \sum_{m=1}^M \zeta^m$ where 
 $M$ is a positive integer.  
 \end{deF}
\begin{cor}\label{cor:generate-zec}
If $\cE$ is the \aorcoll\ collection  of   \cf s determined by \cf s $\delta^n$ of order $n-1$ for $n\ge 2$, then $\cE$ is \zec, and $\hbeta^n = \delta^n$ for all $n\ge 2$.
Moreover, $\cE$ is the only \zec\ collection for positive integers such that 
$\hbeta^n=\delta^n$ for each $n\ge 2$.
\end{cor}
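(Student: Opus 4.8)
The plan is to exhibit the collection $\cE$ determined by $\set{\delta^n : n \ge 2}$ as a \zec\ collection by checking the two conditions of Definition \ref{def:zec-N}, then deducing $\hbeta^n = \delta^n$ from the structure of $\delta$-block decompositions, and finally obtaining uniqueness from Theorem \ref{thm:block-def-N}. First I would verify Part 1 of Definition \ref{def:zec-N}: given $\mu = \sum_{m=1}^M \zeta^m \in \cE$, every \cf\ below $\mu$ in the ascending \lex\ order is supported on $[1, n_M]$ where $n_M = \ord(\mu)$ (for the maximal block $\delta^n$ the order is $n - 1$, for a proper block at index $n$ the order is $n$), and only finitely many \cf s of $\cE$ have support bounded by a fixed index, since each such \cf\ is a concatenation of $\delta$-blocks over a bounded range of indices and each block at a given index takes finitely many values — the proper block at index $n$ is pinned down by a single free coordinate $\zeta_i \in [0, \delta^{n+1}_i)$ and the tail $\zeta \equiv \delta^{n+1} \reS (i, n]$.

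Next I would establish the \immsucc\ formula, which is the heart of the argument and where I expect the main obstacle. Given $\mu = \sum_{m=1}^M \zeta^m \in \cE$ with $\zeta^1$ supported on $[1, n_1]$, I claim the \cf\ $\nu$ obtained by the recipe in Theorem \ref{thm:block-def-N}, Part 1(b) — namely $\nu = \beta^1 + \mu$ if $\zeta^1$ is not the maximal block $\delta^{n_1 + 1}$, and $\nu = \beta^{n} + \sum_{m=2}^M \zeta^m$ if $\zeta^1 = \delta^n$ — lies in $\cE$ and is the \immsucc\ of $\mu$. Membership in $\cE$ needs an induction: incrementing the lowest coordinate of the lowest block either keeps $\zeta^1$ a legitimate proper block (if it stays below $\delta^{n_1+1}$ coordinatewise at index $1$) or forces a carry, replacing $\zeta^1$ by the maximal block $\delta^n$ and recursing on the block $\zeta^2$; the carry terminates because the blocks have bounded support and the recursion strictly decreases $M$. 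The delicate point is showing there is nothing strictly between $\mu$ and $\nu$: any $\gamma$ with $\mu \aord \gamma \aord \nu$ would have to agree with both on a tail and differ at some index; a careful comparison of $\delta$-block decompositions, using that $\delta^{n+1}$ is the unique largest block fitting on $[1, n]$, rules this out. Once the \immsucc\ exists and satisfies $\td\mu = \beta^n + \resab{[n,\infty)}(\mu)$ with $\mu \equiv \hbeta^n \reS [1,n)$ in the carry case, Part 2 of Definition \ref{def:zec-N} follows directly, so $\cE$ is \zec.

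It then remains to identify $\hbeta^n = \delta^n$. By Lemma \ref{lem:seq} the \immpred\ of each basis \cf\ $\beta^n$ exists; by the \immsucc\ formula just proved, the \cf\ whose block decomposition is the single maximal block $\delta^n$ has \immsucc\ equal to $\beta^n$ (taking $M = 1$, $\zeta^1 = \delta^n$, so $\td{\delta^n} = \beta^n$), hence $\delta^n = \hbeta^n$. Finally, uniqueness: if $\cE'$ is any \zec\ collection for positive integers with $\hbeta^n = \delta^n$ for all $n \ge 2$, then by Theorem \ref{thm:block-def-N}, Part 1(a) every member of $\cE'$ has a $\hbeta$-block decomposition $\sum_{m=1}^M \zeta^m$, which is precisely a $\delta$-block decomposition, so $\cE' \subseteq \cE$; conversely by Theorem \ref{thm:block-def-N}, Part 2 every such decomposition lies in $\cE'$, so $\cE \subseteq \cE'$, giving $\cE = \cE'$. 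I expect the \lex-order minimality check in the \immsucc\ step to be the one requiring genuine care; the rest is bookkeeping on block supports.
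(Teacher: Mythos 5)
Your proposal is correct and takes essentially the same approach as the paper: your direct verification of Definition \ref{def:zec-N} for the determined collection (finiteness, the two-case \immsucc\ rule, and the resulting identification $\hbeta^n=\delta^n$) is exactly the content of the paper's Lemma \ref{lem:E-unique}, and your uniqueness argument via Theorem \ref{thm:block-def-N}, Parts 1(a) and 2, is the paper's Lemma \ref{lem:E-cE-rule} in different packaging. One minor remark: no cascading carry is actually needed for membership of the proposed \immsucc, since $\beta^{i_2}+\zeta^2$ is always again a single $\delta$-block (proper, or maximal with support reaching down to $1$), so the only step requiring real work is the minimality check you flag, which the paper handles by stripping the common upper blocks and reducing an intermediate element to some $\tau\in\cE$ with $\delta^{n_1+1}\aord\tau\aord\beta^{n_1+1}$, contradicting that $\delta^{n_1+1}$ is the \immpred\ of $\beta^{n_1+1}$.
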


By Corollary \ref{cor:generate-zec}, given a
set of \cf s $\delta^n$ of order $n-1$ for $n\ge 2$,
we may define  
 {\it the \zec\ collection for positive integers with \immpred s
$\hbeta^n=\delta^n$ for $n\ge 2$} to be the collection defined in Definition
\ref{def:generate-zec}.

Let us introduce \zec\ collections  for the unit interval $\OI$.

\begin{deF}\label{def:immsucc-R}

Let $\cE$ be a \dorcoll\ collection of   \cf s,
and given an index $M\ge 1$, let
$\cE^M$ denote the collection consisting
of $\ep \reS M$ for $\ep\in\cE$.
The collection $\cE$ is called \zec\ for the open interval $\OI$ if the following
are satisfied:
\begin{enumerate}

\item For each $\mu\in\cE^M$
there are at most finitely many \cf s in $\cE^M$ that are less than $\mu$.
\item Given an index $n\ge 1$, there is a unique \cf\ $\bbeta^n$
of order $ n$ with infinite support, not necessarily a member of $\cE$,
such that $\bbeta^n \reS M$ is the \immpred\ of $\beta^{n-1} \reS M$
in $\cE^M$
for all $M\ge n$ if $n\ge 2$, and it is maximal in $\cE^M$ for all $M\ge n$ if $n=1$. The \cf s $\bbeta^n$ are called the maximal \cf\ of order $ n$ for $\cE$.

\item
Given $\mu\in \cE^M$ that is less than $\bbeta^1 \reS M$,
if
its \immsucc\ $\td\mu$ in $\cE^M$ is not $\mu + \beta^M$,
then there is an index $1\le n< M$
such that $\bbeta^{n+1}\equiv \mu \reS (n,M]$ and $\td\mu =\resab{[1,n]}(\mu) + \beta^n $.

\item
Let $\ep$ be a   \cf\ in $\cE$.
Then, $\ep \in \cE$ if and only if there are infinitely many indices $M\ge 1$ such that
$\ep^M:=\ep \reS M$ is a member of $\cE^M$ and
the \immsucc\ of $\ep^M$ in $\cE^M$ is given by $\ep + \beta^M \reS M$ .

\end{enumerate}
\end{deF}
\begin{example}\label{exm:OI}\rm
Let $\cE $ be the \dorcoll\ collection of  
\cf s $\mu$ 
such that $\mu_k\le k$ and $\mu_{k}=k$ implies $\mu_{k+1}=0$ for all $k\ge 1$.
Then, $\cE$ is \zec\ for $\OI$.
The \cf\ $\bbeta^3=(0,0,3,0,5,0,7,0,\cdots)$
is an example of maximal \cf s for $\cE$, and it is not a member of $\cE$.
The \immsucc\ of $\resab{[1,6]}(\beta^2 + \bbeta^3)=(0,1,3,0,5,0)$ in $\cE^6$ 
is $2\beta^2$. 
\end{example}
Notice that since $\beta^n \in\cE^M$ for all $M\ge n$,
the property $\resab{[1,M]}(\beta^n) \dord \resab{[1,M]}(\bbeta^n)$ implies that $\bbeta^n_n\ge 1$, i.e., $\ord(\bbeta^n)=n$.
Also notice that $\bbeta^n$
does not satisfy the existence of infinitely many indices $M$ described
in Definition \ref{def:immsucc-R}, Part 4, and hence,
it is not a member of the \zec\ collection $\cE$.
This condition is motivated from the situation where we have
two representations in the binary expansions,
$1/2 = 1/2^2 + 1/2^3 + 1/2^4+\cdots$.
However, the fact that a   \cf\ $\mu$ terminates with a maximal \cf\ of order $n$
does not imply that $\mu \not\in \cE$.
For example, $\mu:=\beta^1 + \bbeta^3$ may or may not be members of $\cE$, and we shall explain this properly  after Corollary \ref{cor:generate-zec-real} below.
As in the case of Definition \ref{def:zec-N},
a \zec\ collection for $\OI$ is
completely determined by the maximal \cf s of order $n$ for $n\ge 1$, and
it is proved by
Theorem \ref{thm:block-R} and Corollary \ref{cor:generate-zec-real} below.

\begin{deF}
Let $\set{\delta^n : n\ge 1}$  be a set of \cf s with infinite support  under the descending \lex\ order such that $\ord(\delta^n)=n$ for each $n\ge 1$.
A   \cf\ $\zeta$  is called
{\it a proper $\delta$-block at index $n$}
if there is an index $ i \ge n$ such that $0\le \zeta_i< \delta^n_i$,
$\zeta\equiv \delta^n \reS [n,i)$, and $\zeta_s=0$
for all $s\not\in [n,i]$,
and the interval of indices $[n,i]$ is called
{\it the support of
a proper $\delta$-block at index $n$}.

\end{deF}
The zero \cf\ is declared to be a proper $\delta$-block at any index $n$ with the support $[n,n]$, and the basis \cf\ $\beta^n$ is a proper $\delta$-block at index $n$ for any integer $n\ge 1$ since $\delta^n$ has infinite support.
Let $\cE$ be the \dorcoll\ collection defined in Example \ref{exm:OI}, and 
let $\mu =(0,0,3,0,5,0,0,\bar 0)$.
Then, $\mu\in\cE$, and it is a proper \Ebb\ at index $3$ with support $[3,7]$.

Our main interest is a descendingly-ordered collection $\cE$ of 
\cf s such that the \cf\ $\bbeta^n$ defined in Definition \ref{def:immsucc-R}, Part 2 exists for each $n\ge 1$.
In general, a proper \Ebb\ is not required to be a member of $\cE$.
By Theorem \ref{thm:block-R}, Part 2 below, if $\cE$ is \zec,
then the proper \Ebb s are members of $\cE$.

\begin{theorem}\label{thm:block-R}
Let $\cE$ be a \dorcoll\ collection   \cf s for the open interval $\OI$.

\begin{enumerate}

\item
The collection $\cE$ is \zec\ if and only if
all of the following are satisfied:

\begin{enumerate}
\item Given an index $n\ge 1$, there is a maximal \cf\ $\bbeta^n$ of order $ n$ as defined in Definition \ref{def:immsucc-R}.

\item
For each $\ep\in\cE$, there are unique proper \Ebb s $\zeta^m$
at index $n_m$ with support $[n_m,i_m]$ for all $m\ge 1$
such that $n_1=1$ and $i_m+1=n_{m+1}$, and
$\ep = \sum_{m=1}^\infty \zeta^m$.
We call the expression the \Ebb\ decomposition of $\ep$.

\item Given a  \Ebb\ decomposition $\ep = \sum_{m=1}^\infty \zeta^m$ of $\ep\in\cE$ and an index $M\ge 1$,
the \immsucc\ $\td\ep$ of $\ep \reS M$ in $\cE^M$ is given as follows, if $\ep \not
\equiv \bbeta^1 \reS [1,M]$.
Let $K$ be the index such that the support $[n_K,i_K]$ of $\zeta^K$ contains $M$. If $n_K\le M < i_K$, then $\td\ep= \sum_{m=1}^{K-2} \zeta^m + \zeta^{K-1} + \beta^{n_K-1}$.
If $M=i_K$, then
$\td\ep= \sum_{m=1}^{K-1} \zeta^m + \zeta^{K} + \beta^{i_K}$.

\end{enumerate}

\item Suppose that $\cE$ is \zec, and let $\zeta^m$ be proper \Ebb s
at index $n_m$ with support $[n_m,i_m]$
such that $n_1=1$ and $i_m+1=n_{m+1}$ for all $m\ge 1$.
Then,
$\sum_{m=1}^\infty \zeta^m$ is a member of $\cE$.
\end{enumerate}

\end{theorem}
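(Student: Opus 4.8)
The plan is to transpose the argument behind Theorem \ref{thm:block-def-N} from the ascending, finite-support world to the descending one, carrying everything out in the truncations $\cE^M$, and then to handle the one feature that has no counterpart in Definition \ref{def:zec-N}: the completion-type condition in Definition \ref{def:immsucc-R}, Part 4. The heart of the matter is a finite analogue of (b): for each $M$, every $\mu\in\cE^M$ with $\mu\dordeq\bbeta^1\reS M$ has a unique expression $\mu=\zeta^1+\cdots+\zeta^K$ in which $\zeta^1,\dots,\zeta^{K-1}$ are proper \Ebb s with consecutive supports $[1,i_1],\dots,[i_{K-2}+1,i_{K-1}]$ exhausting $[1,i_{K-1}]$ and $\zeta^K=\resab{(i_{K-1},M]}(\bbeta^{i_{K-1}+1})$ is an initial segment (possibly empty) of a maximal \cf. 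I would build it greedily: starting with $n=1$, let $i$ be the least index at which $\mu$ drops strictly below $\bbeta^{n}$ — such an $i$ exists and is at most $M$ because $\bbeta^{n}\reS M$ is, by Definition \ref{def:immsucc-R}, Part 2, the \immpred\ of $\beta^{n-1}\reS M$ (or the top element when $n=1$), so $\mu$, agreeing with it up to the first discrepancy, must be smaller there — then peel off the proper block $\resab{[n,i]}(\mu)$, reset $n\leftarrow i+1$, and repeat. The statement that keeps the recursion legitimate is that, once the blocks covering $[1,i_m]$ have been removed, the residual tail satisfies $\resab{(i_m,M]}(\mu)\dordeq\resab{(i_m,M]}(\bbeta^{i_m+1})$; I would prove this by induction on $m$ from Definition \ref{def:immsucc-R}, Parts 2 and 3, arguing that a larger tail would push $\mu$ past an element of $\cE^M$ of the form ``$\resab{[1,i_m]}(\mu)+\beta^{i_m}$ continued,'' contradicting either the maximality that defines $\bbeta^{i_m+1}$ or the successor formula of Part 3. \textbf{This inductive tail estimate is the main obstacle.}

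For the direction ``$\Rightarrow$'' assume $\cE$ is \zec. Then (a) is Definition \ref{def:immsucc-R}, Part 2 verbatim. For (b), apply the truncated decomposition to $\ep\reS M$ and let $M\to\infty$; since the greedy construction never inspects coordinates beyond the first drop below the current maximal \cf, the truncated decompositions are mutually consistent and glue to a sequence of proper \Ebb s with consecutive supports summing to $\ep$. That this sequence ``runs to infinity,'' rather than terminating as the truncation of a maximal \cf\ in the excluded way, is exactly what Definition \ref{def:immsucc-R}, Part 4 provides: the infinitely many indices $M$ for which the successor of $\ep\reS M$ in $\cE^M$ equals $\ep\reS M+\beta^M$ turn out, by the computation below, to be precisely the right endpoints of the blocks. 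Uniqueness descends from the truncated statement. For (c), fix the decomposition and $M$, let $[n_K,i_K]$ be the block support containing $M$, and apply Part 3: if $n_K\le M<i_K$ then $\bbeta^{n_K}\equiv\ep\reS(n_K-1,M]$, so $\td\ep=\resab{[1,n_K-1]}(\ep)+\beta^{n_K-1}=\sum_{m\le K-1}\zeta^m+\beta^{n_K-1}$; if $M=i_K$ then the $M$th coordinate of $\ep$ lies strictly below the corresponding coordinate of the relevant maximal \cf, so the ``else'' hypothesis of Part 3 is not met and $\td\ep=\ep\reS M+\beta^M=\sum_{m\le K}\zeta^m+\beta^{i_K}$. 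These are the two formulas of (c).

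For the direction ``$\Leftarrow$'' together with Part 2, assume (a), (b), (c). Part 2 of Definition \ref{def:immsucc-R} is (a). By (b), each $\mu\in\cE^M$ is a restriction of some $\ep\in\cE$, hence is pinned down by the finitely many proper \Ebb s of $\ep$ supported inside $[1,M]$ together with a single straddling initial segment of a maximal \cf; since each proper block on a given support has only finitely many choices — its nonzero entries are bounded by entries of the relevant $\bbeta^n$, which are nonnegative integers — and $[1,M]$ admits only finitely many interval decompositions, $\cE^M$ is finite with top element $\bbeta^1\reS M$, which is Part 1. Iterating the successor inside the finite chain $\cE^M$ from the zero \cf\ and reading off (c) at each step identifies $\cE^M$ with the set of truncated block configurations reaching $M$, and reading (c) backwards through the translation above returns Part 3. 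Part 4 in the ``only if'' direction is the $M=i_K$ case of (c); in the ``if'' direction, infinitely many good $M$ force $\ep$ to be a sum of proper \Ebb s with consecutive supports, so it only remains to place such a sum back in $\cE$ — which is Part 2 of the present theorem. I would prove Part 2 alongside: for every $M$ the truncation $\bigl(\sum_m\zeta^m\bigr)\reS M$ appears on the successor chain of $\cE^M$, hence lies in $\cE^M$, and at each block endpoint its successor is itself plus $\beta^M$, so Definition \ref{def:immsucc-R}, Part 4 puts $\sum_m\zeta^m$ in $\cE$. \textbf{The second obstacle is arranging this last bootstrap so that it is not circular}: one must prove ``$\cE^M$ equals the truncated configurations'' using only data internal to $\cE^M$, and invoke Part 4 only afterwards. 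As an alternative for the whole theorem, one could reverse the index range $[1,M]$, extend $\cE^M$ to a genuine collection of \cf s, and quote Theorem \ref{thm:block-def-N} for each $M$, at the cost instead of checking that the reversed collections remain compatible as $M$ grows.
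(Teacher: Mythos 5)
Your overall architecture matches the paper's: work inside the truncations $\cE^M$, prove there a block-decomposition statement analogous to Theorem \ref{thm:block-def-N}, and then glue across $M$ by identifying the indices in Definition \ref{def:immsucc-R}, Part 4 with the right endpoints of the proper \Ebb s (your closing alternative --- reverse the indices on $[1,M]$ and quote the ascending theory --- is essentially what the paper does, by transferring the \dver s of Corollary \ref{cor:decomposition}, Lemma \ref{lem:E-unique}, Proposition \ref{prop:E-cE-rule} and Lemma \ref{lem:E-cE-rule} to $\cE^M$). But as a proof the proposal has a genuine gap at exactly the point you flag as ``the main obstacle'': the claim that, after peeling blocks up to $i_m$, the residual tail satisfies $\resab{(i_m,M]}(\mu)\dordeq\resab{(i_m,M]}(\bbeta^{i_m+1})$ --- equivalently, that every $\mu\in\cE^M$ admits a proper-block decomposition at all --- is never established. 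Your sketched contradiction (``a larger tail would push $\mu$ past an element of $\cE^M$ of the form $\resab{[1,i_m]}(\mu)+\beta^{i_m}$ continued'') presupposes that such an element lies in $\cE^M$ and that its position relative to $\mu$ can be controlled, which is not given by the axioms. The paper gets this decomposition by a different mechanism: the successor-iteration argument of Lemma \ref{lem:decomposition} (iterate the \immsucc\ of $\mu$ until the part beyond the current index changes; finiteness from Part 1 of the definition plus the successor rule of Part 3 then force the tail segment to be a proper block), followed by Corollary \ref{cor:decomposition} and the comparison of $\cE^M$ with the block-generated collection $E^M$ via Lemmas \ref{lem:E-unique}--\ref{lem:E-cE-rule}. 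Without this (or some substitute), both your finite decomposition and the $\Leftarrow$ bootstrap for Parts 3 and 4 of the definition remain unproved.

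A second, related problem is your use of Definition \ref{def:immsucc-R}, Part 3 in the successor computations for (c). As stated, Part 3 is a one-way implication: \emph{if} $\td\mu\ne\mu+\beta^M$ \emph{then} some index $n<M$ with $\bbeta^{n+1}\equiv\mu\reS(n,M]$ realizes the successor. You use its converse twice: at $n_K\le M<i_K$ you infer the formula for $\td\ep$ from the mere existence of the congruence at $n=n_K-1$, and at $M=i_K$ you infer $\td\ep=\ep\reS M+\beta^M$ from the failure of that single congruence, whereas the contrapositive requires ruling out \emph{every} $n<M$ with $\bbeta^{n+1}\equiv\ep\reS(n,M]$ (such an $n$ can exist even when $\ep_{i_K}<\bbeta^{n_K}_{i_K}$). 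The paper avoids this by proving the successor rule combinatorially for $E^M$ (the \dver\ of Lemma \ref{lem:E-unique}, Part 2) and then showing $\cE^M=E^M$, rather than reading the successor off Definition \ref{def:immsucc-R}, Part 3 directly. So the proposal is a correct plan at the structural level, but the two steps you yourself flag as obstacles are precisely where the paper's substantive arguments live, and they are not supplied.
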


As in the case of  \zec\ collections for positive integers, a \zec\ collection
$\cE$ for $\OI$ is completely determined by the set of maximal \cf s $\bbeta^n$. 
\begin{deF}\label{def:generate-zec-real}
Let $\set{\delta^n : n\ge 1}$  be a set of \cf s with infinite support  under the descending \lex\ order such that $\ord(\delta^n)=n$ for each $n\ge 1$.
 {\it The  \dorcoll\ collection $\cE$ of   \cf s determined by $\set{\delta^n : n\ge 1}$} is defined to be the \dorcoll\ collection $\cE$ of   \cf s consisting of
$\delta$-block decompositions $ \sum_{m=1}^\infty \zeta^m$.
 \end{deF}
\begin{cor}\label{cor:generate-zec-real}  
If $\cE$ is the \dorcoll\ collection $\cE$ of   \cf s determined by \cf s $\delta^n$ of order $n$ with infinite support for $n\ge 1$, then $\cE$ is \zec, and $\bbeta^n =\delta^n$ for all $n\ge 1$.
Moreover, $\cE$ is the only \zec\ collection for $\OI$ such that 
$\bbeta^n=\delta^n$ for each $n\ge 1$.
\end{cor}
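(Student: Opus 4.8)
The plan is to deduce the corollary from Theorem~\ref{thm:block-R}, in close analogy with the way Corollary~\ref{cor:generate-zec} follows from Theorem~\ref{thm:block-def-N}. First I would record the easy preliminaries: the collection $\cE$ of Definition~\ref{def:generate-zec-real} really is a \dorcoll\ collection of \cf s, since it contains the zero \cf\ (the tiling sum of zero proper $\delta$-blocks with one-point supports) and every basis \cf\ $\beta^n$, because $\beta^n$ is itself a proper $\delta$-block at index $n$ --- with support $\{n\}$ if $\delta^n_n\ge 2$, and with support $[n,i]$, where $i$ is the next nonzero index of $\delta^n$, if $\delta^n_n=1$ (this uses that $\delta^n$ has infinite support). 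Granting this, it suffices to verify conditions (a), (b), (c) of Theorem~\ref{thm:block-R}, Part~1; the uniqueness clause then follows almost immediately.

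Condition (b) is where the description of $\cE$ as a set of block decompositions does the work: existence of a $\delta$-block decomposition for each $\ep\in\cE$ is part of the definition of $\cE$, and I would prove uniqueness by induction on the blocks from the left. If $\ep$ admits two $\delta$-block decompositions whose first blocks have right endpoints $i_1\le i_1'$, then reading off the entry at index $i_1$ forces $\ep_{i_1}<\delta^1_{i_1}$ from the first decomposition but $\ep_{i_1}=\delta^1_{i_1}$ from the second whenever $i_1<i_1'$; hence $i_1=i_1'$, the two first blocks both agree with $\delta^1$ on $[1,i_1)$, agree with $\ep$ at $i_1$, and vanish elsewhere, so they coincide, and one repeats on $[i_1+1,\infty)$. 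For condition (a) I would show $\bbeta^n=\delta^n$ by checking that $\delta^n$ --- which is of order $n$ with infinite support, as required --- satisfies the defining property of the maximal \cf\ of order $n$ in Definition~\ref{def:immsucc-R}, Part~2, and then appealing to uniqueness of such a \cf. Concretely: every $\ep\in\cE$ satisfies $\ep\reS M\dordeq\delta^1\reS M$ because its first block agrees with $\delta^1$ up to its right endpoint and is strictly smaller there, with equality achieved by taking the first block long enough, and $\delta^1\reS M\in\cE^M$ since a long enough truncation of $\delta^1$ is a proper $\delta$-block; and for $n\ge 2$, a \cf\ in $\cE^M$ is $\dord\beta^{n-1}\reS M$ exactly when it vanishes on $[1,n-1]$ --- which, via the block decomposition, forces the first $n-1$ blocks to be zero one-point blocks and the $n$th to be a proper $\delta$-block at index $n$ --- and the largest of these is $\delta^n\reS M$.

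The genuine obstacle is condition (c), the \immsucc\ formula. Given $\ep=\sum_m\zeta^m\in\cE$, an index $M$ with $\ep\reS M\ne\delta^1\reS M$, and the block $\zeta^K$ whose support $[n_K,i_K]$ contains $M$, the asserted successor is $\td\ep=\sum_{m=1}^{K-1}\zeta^m+\beta^{n_K-1}$ when $n_K\le M<i_K$ and $\td\ep=\sum_{m=1}^{K}\zeta^m+\beta^{i_K}$ when $M=i_K$. I would then (i) show $\td\ep\in\cE^M$ by re-assembling it into a genuine $\delta$-block decomposition; the one delicate point is that adding $\beta^{n_K-1}$, respectively $\beta^{i_K}$, can make the affected block agree all the way across its support with the relevant maximal \cf, in which case one absorbs the increment by extending that block's support out to the next nonzero index of that \cf\ (which exists since it has infinite support), leaving the restriction to $[1,M]$ unchanged; (ii) check that $\td\ep$ exceeds $\ep\reS M$ in the descending \lex\ order; and (iii) check minimality, namely that any $\gamma\in\cE^M$ with $\ep\reS M\dord\gamma$ satisfies $\td\ep\dordeq\gamma$, by comparing $\delta$-block decompositions entry by entry from index~$1$ and examining the first index at which $\gamma$ differs from $\ep\reS M$. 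Keeping the two sub-cases of (c) straight near the endpoints of supports, together with the re-assembly in step~(i), I expect to be the bulk of the work.

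Finally, the uniqueness clause is immediate from Theorem~\ref{thm:block-R}: if $\cE'$ is any \zec\ collection for $\OI$ with $\bbeta^n=\delta^n$ for all $n\ge 1$, then Part~1(b) supplies each member of $\cE'$ with a $\delta$-block decomposition, so $\cE'\subseteq\cE$, while Part~2 places every tiling sum of proper $\delta$-blocks into $\cE'$, so $\cE\subseteq\cE'$; hence $\cE=\cE'$.
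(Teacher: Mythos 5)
Your proposal is correct and follows essentially the same route as the paper: verify conditions (a), (b), (c) of Theorem~\ref{thm:block-R}, Part~1 for the collection determined by the $\delta^n$, conclude via the if-direction of that theorem that it is \zec\ with $\bbeta^n=\delta^n$, and obtain uniqueness from the block-decomposition characterization. The only difference is one of packaging: where the paper simply cites the descending versions of Lemma~\ref{lem:E-unique} (for the uniqueness of decompositions and the \immsucc\ rule in $\cE^M$) and of Lemma~\ref{lem:E-cE-rule} (for the uniqueness clause), you re-derive that material inline --- including the correct handling of the case where adding $\beta^{n_K-1}$ or $\beta^{i_K}$ saturates a block, fixed by extending its support to the next nonzero index of the maximal \cf\ --- and you get the final uniqueness by combining Theorem~\ref{thm:block-R}, Part~1(b) and Part~2 directly.
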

By Corollary \ref{cor:generate-zec-real},
given a set of \cf s $\delta^n$ of order $n$ with infinite support    for  $n\ge 1$,
we may define 
{\it the \zec\ collection for $\OI$ determined by maximal \cf s $\bbeta^n=\delta^n$ for $n\ge 1$} 
to be the collection defined in Definition \ref{def:generate-zec-real}.
For example, let $\cE$ be the  \zec\ collection for $\OI$ determined by 
maximal \cf s $\bbeta^n = \sum_{k=0}^\infty \beta^{n+2k}
=(\bar 0,1,0,1,0,1,0,\dots)$ for $n\ge 1$. Then, the collection is similar to the one for the classical \zec\ condition, but
it allows infinitely many entries of $1$.

Let us revisit the earlier example $\mu:=\beta^1 + \bbeta^3$.
Suppose that $\bbeta^3=\sum_{k=0}^\infty \beta^{3+2k}$
and $\bbeta^n = \sum_{k=n}^\infty \beta^k$ for all $n\ge 4$
are the maximal \cf s of a \zec\ collection for $\OI$.
If
$\bbeta^1 = \beta^1+\sum_{k=3}^\infty \beta^k$,
then
$\mu =(\beta^1+\beta^3 +0) + (\beta^5+0) +(\beta^7+0)+\cdots$ is an \Ebb\ decomposition, and hence, $\mu\in\cE$ by Theorem
\ref{thm:block-R}.
If $\bbeta^n=\sum_{k=n}^\infty \beta^k$ for all $n\ge 1$, then $\mu=\beta^1 + \sum_{k=3}^\infty \beta^k$ does not have an \Ebb\ decomposition since
the first proper \Ebb\ in $\mu$ is $(\beta^1+0)$,
but there is no proper \Ebb\ at index $3$ in $\mu$.
Hence, $\mu$ is not a member of $\cE$.

Let
\begin{equation}\label{eq:L-recursion}
Q_n = e_1 Q_{n-1} + \cdots + e_N Q_{n-N}
\end{equation}
be a linear recurrence for a sequence in $R$
where $N$ is a fixed positive integer and $e_k$ are integers independent of $n$.
Let us review the standard \zec\ conditions on the \cf s associated with this recursion for sequences $Q$ in $\nat$ in terms of \immpred s.
The conditions in full generality are first introduced in \cite{mw}, and introduced below would be a slight generalization toward adapting infinite expansions of numbers in $\OI$ and $\zz_p$.
Let $L=(e_1,\dots, e_N)$ be a finite list of non-negative integers where $e_1e_N>0$,
and we shall call it {\it a \zml}.
Given an index $n\ge 2$, let
$\delta^n =\sum_{k=1}^{n-1}\hat e_{\rem(k)} \beta^{n-k}
=(\dots, e_2,e_1,\hat e_N,e_{N-1},\dots, e_2,e_1,\bar 0)$ be
\cf s where $\rem(k)$ denotes the least positive residue of $k$ mod $N$
and $\hat e_k = e_k$ for all $1\le k<N$ and $ \hat e_N:=e_N -1 $.
We may consider the \zec\ collection for positive integers with 
$\hbeta^n=\delta^n$ for $ n\ge 2$,  and we denote it by $\cL$.
The completion of $\cL$ for the $p$-adic integers is denoted by
$\overline{\cL}$.
For the \zec\ collection   for the open interval $\OI$, we consider $\bbeta^n: =\sum_{k=n}^{\infty}\hat e_{\rem(k-n+1)} \beta^{k}
=(\bar 0,e_1,e_2,\dots,\hat e_N, e_1,e_2,\dots)$,
and we also denote the collection by $\cL$.

Given a set of numbers $R$ and a \lex ly ordered collection $\cE$ of
 \cf s,
a sequence $Q $ in $R$ is said to
{\it have the \EEunique}
if $\sum \delta Q$ have distinct values for the \cf s $\delta \in \cE$.
Given a sequence $Q$ with \EEunique,
let us denote by $X_Q^{\cE}$ the subset
consisting of values of $\sum \delta Q$ for non-zero \cf s $\delta\in \cE$, and we call it
an $\cE$-subset of $R$.
When the collection $\cE$ is understood in the context,
we simply denote the subset by $X_Q$.
Recall that if $L=(1,1)$ is a \zml, the \aclz\ condition on \cf s for positive integers coincides with the classical \zec\ condition
on the Fibonacci sequence.
For example, consider $Q_n=2^{n-1}$ for $n\ge 1$ under the \aclz\ condition.
Then, $165= Q_8 + Q_6 + Q_3 + Q_1$ is a member of $X_Q$
while $166$ and $167$ are not.
However, if $\widetilde L=(1,2)$, then the binary expansion of a positive integer is
$\widetilde\cL$-\zec, and $X_Q^{\widetilde\cL}=\nat$
while $X_Q^{\cL}$ is a proper subset of $\nat$.
Let us introduce another interesting example.
Let $L=(1,1)$, and let $Y$ be the subset consisting of $ \sum \mu F $
for \clz\ \cf s $\mu$ with $\mu_1=0$ where $F=(1,2,3,\dots)$ is the Fibonacci sequence, i.e.,
the positive integers whose classical \zec\ decompositions do not involve $F_1$.
Then, obviously, $Y$ is represented uniquely under the \Lz\ condition by the sequence $Q$ given by $Q_k:=F_{k+1}$ for $k\ge 1$.
More interestingly, by Theorem \ref{thm:integers} below,
it turns out that it is the only increasing sequence with that property.

Given a \lex ly ordered collection $\cE$ and a subset $Y$ of $R$, if there is a sequence $Q$ in $R$ with \EEunique\ such that $X_Q=Y$,
then $Q$ is called a {\it fundamental sequence for the $\cE$-subset $Y$}.
In addition, if the fundamental sequence $Q$ is an increasing sequence,
then $Y$ is also called an {\it increasing $\cE$-subset of $\nat$}.
If the fundamental sequence $Q$ is a decreasing sequence in $R$ that is either $\OI$ or $\zz_p$, the subset $Y$ is called a {\it decreasing $\cE$-subset of $R$}.

\iffalse
Also we say that {\it an $\cE$-subset $Y$ is generated by a fundamental sequence $Q$}, and
{\it a fundamental sequence $Q$ generates an $\cE$-subset $Y$}.
\fi

\subsection{Results} \label{sec:results}
Let us begin with the results on positive integers.
In this paper, for simplicity an increasing sequence
means a strictly increasing sequence in the usual sense.
\begin{theorem}
\label{thm:integers}
\phantom{nothing}
\begin{enumerate}
\item (\zec's Theorem for positive integers) Let $Q$ be an increasing sequence in $\nat$ with $Q_1=1$.
\begin{enumerate}
\item
Then, there are \cf s $\mu^n$ of order $n-1$ for $n\ge 2$ such that
\begin{equation}\label{eq:full-recursion}
Q_n = Q_1 + \sum\mu^n Q.
\end{equation}
\item Suppose that a sequence of \cf s
$\mu^n$ of order $n-1$ for $n\ge 2$ satisfies the recursion
(\ref{eq:full-recursion}) for all $n\ge 2$, and let $\cE$ be the \zec\ collection with $\hbeta^n:=\mu^n$ as the \immpred\ of $\beta^n$ %
for $n\ge 2$. Then, $Q$ is a fundamental sequence for the $\cE$-set of numbers $\nat$.
\end{enumerate}
\item (The weak converse)
Let $\cE$ be an arbitrary  \aorcoll\ collection of   \cf s with finite support.
\begin{enumerate}
\item If $\cE$ is \zec, then $\nat$ is an $\cE$-set of numbers.
\item
Each increasing $\cE$-subset $Y$ of $\nat$ has a unique
increasing fundamental sequence $Q$.
In addition, if $\cE$ is \zec, and $Y=\nat$, then $Q$ is given by
$Q_n=\sum\hbeta^{n} Q +1$ for all $n\ge 2$ and $Q_1=1$ where
$\hbeta^n$ for $n\ge 2$ are the \immpred s of $\beta^n$.
\end{enumerate}
\end{enumerate}
\end{theorem}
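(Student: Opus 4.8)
The plan is to prove the four parts in the order 1(a), 1(b), 2(a), 2(b), each building on its predecessors. For 1(a), I would apply the mixed-radix greedy algorithm to the integer $Q_n-Q_1=Q_n-1$: starting from $r=Q_n-1$, with $k$ running from $n-1$ down to $1$, set $\mu^n_k:=\lfloor r/Q_k\rfloor$ and replace $r$ by $r-\mu^n_k Q_k\in[0,Q_k)$. Strict monotonicity gives $Q_n-1\ge Q_{n-1}$, so $\mu^n_{n-1}\ge1$ and $\ord(\mu^n)=n-1$; since $Q_1=1$ the final remainder is $0$, hence $\sum\mu^n Q=Q_n-1$, which is (\ref{eq:full-recursion}). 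For 1(b), let $\cE$ be the \zec\ collection with $\hbeta^n=\mu^n$ (legitimate and unique by Corollary~\ref{cor:generate-zec}, since $\ord(\mu^n)=n-1$), and enumerate $\cE=\{\tau^0\aord\tau^1\aord\cdots\}$ with $\tau^0=0$ and $\tau^{n+1}$ the immediate successor of $\tau^n$ as in Lemma~\ref{lem:seq}. The heart of the matter is the identity $\sum\td\mu\,Q=\sum\mu\,Q+1$ for all $\mu\in\cE$: if $\td\mu=\beta^1+\mu$ it is immediate from $Q_1=1$, and otherwise Part~2 of Definition~\ref{def:zec-N} supplies $p\ge2$ with $\mu\equiv\hbeta^p\reS[1,p)$ and $\td\mu=\beta^p+\resab{[p,\infty)}(\mu)$, so $\sum\td\mu\,Q-\sum\mu\,Q=Q_p-\sum_{k<p}\hbeta^p_k Q_k=Q_p-\sum\mu^p Q=Q_p-(Q_p-Q_1)=1$, using $\ord(\hbeta^p)=p-1$ and (\ref{eq:full-recursion}) at index $p$. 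Induction from $\sum\tau^0 Q=0$ gives $\sum\tau^n Q=n$, so $\delta\mapsto\sum\delta Q$ is a bijection $\cE\to\{0\}\cup\nat$; its injectivity is the unique $\cE$-representation property and the nonzero cfs map onto $\nat$, which is precisely the statement that $Q$ is a fundamental sequence for the $\cE$-set $\nat$.

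Part 2(a) is 1(b) read backwards. Given a \zec\ collection $\cE$ with immediate predecessors $\hbeta^n$ (of order $n-1$), set $Q_1:=1$ and $Q_n:=\sum\hbeta^n Q+1$ for $n\ge2$; this recursion is well posed since $\hbeta^n$ is supported on $[1,n-1]$, and since $\hbeta^n_{n-1}\ge1$ one gets $Q_n\ge Q_{n-1}+1$, so $Q$ is a strictly increasing sequence in $\nat$ with $Q_1=1$ satisfying (\ref{eq:full-recursion}) with $\mu^n=\hbeta^n$. Then 1(b) makes $Q$ a fundamental sequence for the $\cE'$-set $\nat$, where $\cE'$ is the \zec\ collection with immediate predecessors $\hbeta^n$; by the uniqueness clause of Corollary~\ref{cor:generate-zec}, $\cE'=\cE$, so $\nat$ is an $\cE$-set of numbers.

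For Part 2(b), existence of an increasing fundamental sequence $Q$ for an increasing $\cE$-subset $Y$ is built into the definition, so the content is uniqueness. I would not attempt to show $\delta\mapsto\sum\delta Q$ order-preserving (unavailable, and seemingly circular, for a general ascendingly-ordered $\cE$), but use only $\sum\delta Q\ge Q_{\ord(\delta)}$ for nonzero $\delta\in\cE$. Set $S_m:=\{\sum\delta Q:\delta\in\cE,\ \ord(\delta)<m\}$. Then $Q_m\notin S_m$, because a cf of order $<m$ representing $Q_m$ would differ from $\beta^m$, contradicting unique representation; and every $y\in Y$ with $y<Q_m$ lies in $S_m$, since the cf $\delta$ representing $y$ must satisfy $\ord(\delta)<m$ (otherwise $y=\sum\delta Q\ge Q_{\ord(\delta)}\ge Q_m$). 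As $Q_m\in Y$, this gives $Q_m=\min(Y\setminus S_m)$. Since $S_m$ depends only on $\cE$ and $Q_1,\dots,Q_{m-1}$, induction on $m$ (base case $Q_1=\min Y$) shows any two increasing fundamental sequences for $Y$ agree. For the ``in addition'' clause, Part 2(a) shows $\nat$ is an increasing $\cE$-subset and its construction exhibits an increasing fundamental sequence with $Q_1=1$ and $Q_n=\sum\hbeta^n Q+1$; uniqueness identifies it with the fundamental sequence in the statement.

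The step I expect to be the main obstacle is Part 2(b): recognizing that order-preservation of $\delta\mapsto\sum\delta Q$ is unavailable for a general $\cE$, and that the weaker inequality $\sum\delta Q\ge Q_{\ord(\delta)}$ already pins $Q_m$ down as $\min(Y\setminus S_m)$. Once that is in hand, the \zec\ case of the ``in addition'' clause follows cheaply by pairing existence (Part 2(a)) with uniqueness rather than by chasing a recursion, and the only other delicate point is the telescoping identity $\sum\td\mu\,Q=\sum\mu\,Q+1$ underlying 1(b).
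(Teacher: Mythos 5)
Your proposal is correct and follows essentially the same route as the paper: the greedy algorithm for 1(a), the telescoping identity $\sum\td\mu\,Q=\sum\mu\,Q+Q_1$ (the paper's Lemma \ref{lem:add-one}) combined with the enumeration of $\cE$ from Lemma \ref{lem:seq} for 1(b), deducing 2(a) from Part 1, and an induction on the index using $\sum\delta Q\ge Q_{\ord(\delta)}$ plus the unique-representation property for 2(b). Your packaging of the uniqueness step as $Q_m=\min(Y\setminus S_m)$ is only a cosmetic variant of the paper's direct comparison of two fundamental sequences, so no substantive difference remains to report.
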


\begin{example}\label{exm:L-recursion}\rm
If $\cL$ is a \zec\ collection for $\nat$ determined by \zml\ $L=(e_1,e_2,\dots,e_N)$,
the recursion (\ref{eq:full-recursion})
is reduced to $Q_n = e_1 Q_{n-1} + \cdots e_N Q_{n-N}$
for all $n>N$, and
\Lz's Theorem is proved in \cite{mw}.
In Theorem \ref{thm:integers}, Part 2 (b), it is asserted that
it is the only increasing fundamental sequence.
\end{example}

Recall the \zec\ collection $\cE$ in Example \ref{exm:k-1}.
Then,
the increasing fundamental sequence $Q$ is given by
$Q_{n+2} = (n+1) Q_{n+1} + Q_n$ for all $n\ge 1$ and
$Q=(1,2,5,17,\dots)$.
Below we list the first few small \cf s in $\cE_0$:
\begin{gather*}
(1) \aord (0,1) \aord (1,1) \aord (0,2) \aord
(0,0,1) \aord (1,0,1) \aord (0,1,1) \aord (1,1,1)\\
\aord (0,2,1)\aord (0,0,2)\aord\cdots \aord (0,2,2)
\aord (0,0,3) \aord (1,0,3)\aord (0,0,0,1)
\end{gather*}
where the \immpred s $\hbeta^n$ for $n=2,\dots,5$ are given by
$(1)$, $(0,2)$, $(1,0,3)$, and $(0,2,0,4)$.
In terms of the values of $\sum\ep Q$,
each value is obtained by adding $Q_1$ to $\sum \hat\ep Q$
where $\hat\ep$ is the immediate predecessor of $\ep$.
Presenting values of $\sum\ep Q$
with respect to the \lex\ order of $\ep$ as above trivially proves that $Q$ is a fundamental sequence.

\begin{example}\label{exm:neg-coeff}\rm
Let $\cE$ be the \zec\ collection for positive integers with
$\hbeta^n=\sum_{k=1}^{n-2} \beta^k + 2\beta^{n-1}
=(1,1,\dots,1,1,2)$ for $n\ge 2$. Then, the increasing fundamental sequence is given by $Q_n = 3Q_{n-1}-Q_{n-2}$
for $n\ge 3$ and $Q=(1,3,8, 21, 55,\dots)$.
Below we list the first few small \cf s in $\cE$ determined by the \immpred s:
\begin{gather*}
(1) \aord (2) \aord (0,1) \aord (1,1) \aord (2,1) \aord
(0,2) \aord (1,2) \aord (0,0,1) \aord (1,0,1)\\
\aord (2,0,1)\aord (0,1,1)\aord\cdots \aord (2,0,2)
\aord (0,1,2) \aord (1,1,2)\aord (0,0,0,1)
\end{gather*}
The fundamental sequence in fact
satisfies the equation (\ref{eq:full-recursion}), and
the common \zquote{tail part} $\sum_{k=1}^{n-2} Q_k$
of the equation allows us to derive the short recursion above.
\end{example}

By reversing the process of finding a linear recurrence from \immpred s
with simple periodic tails as in Example \ref{exm:neg-coeff}, we obtain the following general result,
and the proof follows immediately from Proposition \ref{prop:neg-coeff} in Section \ref{sec:neg-coeff}.
\begin{theorem}\label{thm:neg-coeff}
Let $N$ be a non-negative integer, and
let $Q_n = \sum_{k=1}^{N+1} c_k Q_{n-k}$ for all $n\ge N+2$
be a linear recurrence
where $c_k$ are constants in $\zz$ such that $ (c_{k} + \cdots + c_{1}) \ge 1$ for all $1\le k\le N+1 $ and $c_1\ge 2$.
Then, there is a \zec\ collection $\cE$ and fixed non-negative coefficients
$e_1,\dots,e_N$ and $b$ for which $e_1\ge 2$,
$\hbeta^n = \sum_{k=1}^{n-N-1} b\beta^k
	+ \sum_{k=1}^N e_k \beta^{n-k}
	$ for all $n\ge N+2$, and
	$\hbeta^n = \sum_{k=1}^{n-1} e_k \beta^{n-k}$
	for all $2\le n\le N+1$, and
	there are initial values
	$(Q_1,\dots,Q_{N+1})$ for which the recurrence defines a fundamental sequence for $\nat$ under the \Ez\ condition.
\end{theorem}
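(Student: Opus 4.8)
The plan is to invert the computation in Example~\ref{exm:neg-coeff}: read off from the target recurrence the \immpred s that would produce it, check that the resulting coefficients are admissible, and then let Corollary~\ref{cor:generate-zec} build the collection and Theorem~\ref{thm:integers} certify the fundamental sequence. So first I would set $e_k:=c_1+\cdots+c_k-1$ for $1\le k\le N$ and $b:=c_1+\cdots+c_{N+1}-1$. The hypothesis $(c_k+\cdots+c_1)\ge 1$ for $1\le k\le N+1$ makes $e_1,\dots,e_N,b$ all nonnegative, and $c_1\ge 2$ makes $e_1\ge 1$. With these, put $\delta^n:=\sum_{k=1}^{n-1}e_k\beta^{n-k}$ for $2\le n\le N+1$ and $\delta^n:=\sum_{k=1}^{n-N-1}b\beta^k+\sum_{k=1}^{N}e_k\beta^{n-k}$ for $n\ge N+2$, the two prescriptions agreeing at $n=N+1$ once empty sums are read as zero. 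Since $e_1\ne 0$ we have $\ord(\delta^n)=n-1$ for all $n\ge 2$, so Corollary~\ref{cor:generate-zec} yields a \zec\ collection $\cE$ for positive integers whose \immpred s are $\hbeta^n=\delta^n$, i.e.\ exactly the ones named in the statement.

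Next I would define the sequence directly by $Q_1:=1$ and $Q_n:=\sum\hbeta^n Q+1$ for $n\ge 2$. Every entry of $\hbeta^n$ is a nonnegative integer and its entry at index $n-1$ is $e_1\ge 1$, so a short induction shows $Q$ is an increasing sequence of positive integers. This defining recursion is precisely the recursion~(\ref{eq:full-recursion}) with $\mu^n=\hbeta^n$ and $Q_1=1$, so Theorem~\ref{thm:integers}, Part~1(b), hands us that $Q$ is a fundamental sequence for the $\cE$-set $\nat$. It then remains only to verify that this same $Q$ is generated by the short recurrence $Q_n=\sum_{k=1}^{N+1}c_kQ_{n-k}$ from its own initial segment $(Q_1,\dots,Q_{N+1})$. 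For $n\ge N+2$ I would subtract $Q_{n-1}=\sum\hbeta^{n-1}Q+1$ from $Q_n=\sum\hbeta^n Q+1$; since $\hbeta^n$ and $\hbeta^{n-1}$ agree on a long constant block of value $b$ carrying the window $(e_N,\dots,e_1)$ shifted by one index, the difference telescopes to
\[
Q_n=(e_1+1)Q_{n-1}+\sum_{k=2}^{N}(e_k-e_{k-1})Q_{n-k}+(b-e_N)Q_{n-N-1},
\]
and substituting the definitions of $e_k$ and $b$ collapses the coefficients to $c_1,\dots,c_{N+1}$. Hence the prescribed recurrence, run from the exhibited initial values, reproduces $Q$, which is a fundamental sequence for $\nat$ under the \Ez\ condition.

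The argument is otherwise routine; the one spot deserving care is the lowest index $n=N+2$ of the telescoping, where $\hbeta^{N+1}$ has an empty constant-$b$ block while $\hbeta^{N+2}$ already carries one entry equal to $b$. Writing $\hbeta^{N+1}=\sum_{k=1}^{N}e_k\beta^{N+1-k}$ and lining up indices shows the displayed identity still holds at $n=N+2$, with the initial segment $(Q_1,\dots,Q_{N+1})$ being the one computed from $Q_1=1$ and $Q_n=\sum\hbeta^n Q+1$ for $2\le n\le N+1$. This inversion is exactly what Proposition~\ref{prop:neg-coeff} packages, and the theorem follows from it at once.
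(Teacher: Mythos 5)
Your proposal is correct and follows essentially the paper's own route: the paper proves this theorem by inverting the coefficient relations of Proposition \ref{prop:neg-coeff} (taking $e_k=c_1+\cdots+c_k-1$ for $1\le k\le N$ and $b=c_1+\cdots+c_{N+1}-1$, nonnegative by hypothesis) and then appealing to Corollary \ref{cor:generate-zec} and Theorem \ref{thm:integers}, which is exactly what you do, your telescoping step being precisely the computation already packaged in Proposition \ref{prop:neg-coeff}. The only mismatch is with the theorem's wording rather than with your argument: the construction forces $e_1=c_1-1\ge 1$ (not the stated $e_1\ge 2$), which is what Proposition \ref{prop:neg-coeff} actually requires and is consistent with the paper's own example ($c_1=8$, $e_1=7$), so the stated bound appears to be a slip in the statement.
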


For example,
let $Q$ be a sequence given by $Q_n
=8Q_{n-1} -2Q_{n-2}-3 Q_{n-3}$ for $n\ge 4$ and $(Q_1,Q_2,Q_3)=(1,8,62)$.
Then, the \zec\ collection described in Proposition \ref{prop:neg-coeff} and Theorem \ref{thm:neg-coeff} has $\hbeta^n = \sum_{k=1}^{n-3} 2 \beta^k
+ 5 \beta^{n-2} + 7 \beta^{n-1}$ for $n\ge 3$,
$\hbeta^3=5\beta^1 +7\beta^2$, and $\hbeta^2=7\beta^1$.
If coefficients $c_k$ do not satisfy the conditions in
Theorem \ref{thm:neg-coeff},
but there are increasing initial values $(Q_1,\cdots,Q_{N+1})$ for which
the recurrence defines an increasing sequence,
there is still a \zec\ condition under which $Q$ is an increasing fundamental sequence for $\nat$, as asserted in Theorem \ref{thm:integers}.
It will be further explained in Algorithm \ref{alg:E} below, but
we do not expect that its \immpred s have periodic tails.
Theorem \ref{thm:integers}, Part 1 (a) can be proved by
a trivial sequence of \cf s $\mu^n$ for $n\ge 2$ given by
\begin{equation}
\mu^n:=(Q_n- Q_{n-1} - 1)\beta^1 + \beta^{n-1} , \label{eq:trivial-hbeta}
\end{equation}
but
we may use a greedy algorithm as described in Algorithm \ref{alg:E} to
find $\mu^n $ where the values of
$\mu^n_k$ are relatively smaller.

\begin{alg}\label{alg:E}\rm
Let $Q$ be an increasing sequence in $\nat$ such that $Q_1=1$.
Given $Q_n$ for $n\ge 2$, let $e_1$ be the largest integer
such that $(Q_n-1) - e_1Q_{n-1}\ge 0$, and for $2\le k \le n-1$, recursively define
$e_k$ to be the largest integer such that
$(Q_n -1) - (e_1 Q_{n-1} +\cdots + e_{k} Q_{n-k} )\ge 0$.
Then,
since $Q_1=1$, the algorithm terminates with an equality
$Q_n=1+\sum_{k=1}^{n-1} e_k Q_{n-k}$.
By Corollary \ref{cor:generate-zec},
we have the \zec\ collection $\cE$ for positive integers with $\hbeta_n:=(e_{n-1},e_{n-2},\dots,e_1,\bar 0)$ for $n\ge 2$  where $(e_1,\dots,e_{n-1})$ depend on $n$,
and by Theorem \ref{sec:results},  the sequence $Q$ is the only increasing fundamental sequence for the $\cE$-set of numbers $\nat$.
\end{alg}

For example, let $Q$ be a sequence given by $Q_k=k!$ for all $k\ge 1$.
If we apply Algorithm \ref{alg:E} using the identity $1!\cdot 1 + 2!\cdot 2 + \cdots + n!\cdot n=(n+1)! - 1$, we obtain 
the \zec\ collection $\cE$ for positive integers with $\hbeta^n = \sum_{k=1}^{n-1} k \beta^k=(1,2,\dots,n-1,\bar 0)$ for all $n\ge 2$.
The trivial example of $\mu^n$ mentioned in (\ref{eq:trivial-hbeta}) yields
\immpred s
$\hbeta^n:=
(Q_n - Q_{n-1}-1)\beta^1 + \beta^{n-1}=
( (n-1)!\cdot(n-1)-1)\beta^1 + \beta^{n-1}$ for $n\ge 2$.

Let us also introduce the algorithm of finding the $\cE$-expansion of a positive integer
when $\cE$ is \zec.

\begin{alg}\label{alg:E-expansion}\rm
Let $\cE$ be a \zec\ collection for positive integers, and let $Q$ be the increasing fundamental sequence $Q$ for the $\cE$-set of numbers $\nat$.
If $x\in \nat$, the unique non-zero \Eb\ decomposition $x=\sum_{m=1}^M \sum \zeta^m Q$ is given by a greedy algorithm in terms of \Eb s. That is,
for each $k=1,2,\dots, M-1$, the proper \Eb\ $\zeta^{M-k+1}$ is recursively defined to be the largest one for which
$x-\sum_{m=1}^{k-1} \sum \zeta^{M-m+1} Q\ge \sum\zeta^{M-k+1} Q$, and $\zeta^1$ is the largest \Eb, proper or maximal, for which
$x-\sum_{m=1}^{M-1} \sum \zeta^{M-m+1} Q= \sum\zeta^1 Q$.
It is a straightforward induction exercise on $x$ to show that
the algorithm terminates with zero remainder.
\end{alg}

To prove Theorem \ref{thm:integers}, Part 1 (b),
we may use Algorithm \ref{alg:E-expansion}. In fact, we use the algorithm to establish the existence and uniqueness
of the $\cE$-expansions of real numbers in the interval $\OI$, and
the proof directly translates to the case of $\nat$.
In Section \ref{sec:proofs}
we also introduce a different approach to proving the case of $\nat$,
for which we do not use a greedy algorithm at all.

At first,
we were motivated to prove the weak converse for the $\cL$-subsets of $\nat$, and realized that the weak converse for subsets of $\nat$ holds even for non-\zec\ collections of \cf s, as stated in Theorem \ref{thm:integers}, Part 2 (b).
However, we learned later that Theorem \ref{thm:integers}, Part 2 (b) for non-\zec\ collections was
noticed and proved for $\nat$ in \cite{keller}, and
apparently it had been unnoticed in the later literature such as
\cite{bruckman} where
the weak converse for the $N$th order \zec\ set of numbers $\nat$
is proved using a different method.
Our proof is nearly identical to the one in \cite{keller}, but
we include our version in Section \ref{sec:proofs} as it is more generally for subsets of $\nat$.

Theorem \ref{thm:integers}, Part 2 concerns proper $\cE$-subsets of $\nat$ as well.
Let $L=(1,1,1)$ be a \zml.
Then, the \zec\ collection $\cL$ defines the $3$rd order \zec\ condition.
Let $Y:=7\nat$, i.e., the subset of positive multiples of $7$.
Then, $Y=X_Q$ where $Q$ is given by $Q_k=7 H_k$ where $H$ is the third order Fibonacci sequence $H=(1,2,4,7,13,\cdots)$.
Thus, $7\nat$ is an $\cL$-subset of $\nat$, and
by Theorem \ref{thm:integers},
the sequence $Q$ is the only increasing fundamental sequence for $7\nat$.
It turns out that other congruence classes mod 7 are not $\cE$-subsets of $\nat$ for any nontrivial \zec\ collections $\cE$, and neither is the subset of positive odd integers, which was
mentioned in Section \ref{sec:intro}.
We shall further discuss this example and more in Section \ref{sec:examples}.

Recall from Example \ref{exm:L-recursion} the fundamental sequence $Q$ for the $\cL$-set of numbers $\nat$.
If we instead choose initial values such that
$Q_n>\sum_{k=1}^{n-1} e_k Q_{n-k}$ for all $2\le n \le N$,
then it is a straightforward exercise to show that
$\mu \aord \delta$ implies $\sum \mu Q < \sum \delta Q$.
This implies that
$Q$ is an increasing sequence, and has the \Lunique.
We may also use the following general criteria to generate
more $\cL$-subsets of $\nat$.
If the initial values for the linear recurrence in Example \ref{exm:L-recursion} are increasing,
then the fundamental sequence is increasing, and by Theorem \ref{thm:integers}, Part 2, these increasing $\cL$-subsets satisfy the weak converse of \Lz's theorem.
\begin{theorem}\label{thm:unique}
Let $N$ be a positive integer, and let $Q$ be a sequence in $\nat$ given by the linear recurrence (\ref{eq:L-recursion}) where
$e_k\ge 1$ for all $k=1,\dots,N$.
Then, the sequence $Q$
has the \Lunique\ if and only if
the values of
$\sum \sig Q$ are distinct for all $\sig\in \cL$ of order $\le 4N$.
\end{theorem}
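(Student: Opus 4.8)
The plan is to dispatch the forward implication in one line and spend the effort on the converse via contraposition, reducing an arbitrary $\cL$-collision to one of order at most $4N$. If $Q$ has the $\Lunique$, then by definition $\sum\sigma Q\ne\sum\sigma'Q$ for every pair of distinct members of $\cL$, in particular for those with $\ord(\sigma),\ord(\sigma')\le 4N$; so that direction is free. For the converse, suppose $Q$ fails the $\Lunique$, fix distinct $\cL$-\cf s $\mu\ne\nu$ with $\sum\mu Q=\sum\nu Q$, and among all such colliding pairs choose one minimizing $n:=\max(\ord(\mu),\ord(\nu))$. The whole task is then to show $n\le 4N$, for then $\mu$ and $\nu$ are themselves the witnesses required.

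First I would collect the structural facts. By Corollary \ref{cor:generate-zec}, $\cL$ is the \zec\ collection whose \immpred s are exactly the \cf s $\delta^m$, and these are $N$-periodic read from the top: the coordinate of $\delta^m$ at index $m-k$ is $\hat e_{\rem(k)}$, so $\delta^{m+N}$ agrees with $\delta^m$ on $[1,m-1]$ and carries one extra copy of the length-$N$ pattern $(e_1,\dots,e_{N-1},e_N-1)$ on $[m,m+N-1]$. By Theorem \ref{thm:block-def-N}, every member of $\cL$ has a unique block decomposition into blocks tiling an initial interval $[1,\ord(\cdot)]$; deleting the topmost block yields another member of $\cL$; and properness of a block is a condition on its own support only, so it survives translating the block down by a multiple of $N$. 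Finally, $Q$ satisfies the homogeneous recurrence $Q_m=e_1Q_{m-1}+\dots+e_NQ_{m-N}$ for all $m>N$, and since the $e_k$ are at least $1$, a run of at least $N$ consecutive zeros in a legal \cf\ can only be built from single-index zero blocks, hence consists entirely of block boundaries.

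Next I would localize the disagreement using minimality. If the topmost blocks of $\mu$ and $\nu$ coincide, deleting them from both gives a strictly shorter colliding pair, a contradiction; more generally, if $\mu$ and $\nu$ share a block boundary $b<n$ and agree above it, then truncating both at $b$ gives a shorter colliding pair. Combining this with the zero-run remark, one shows that a minimal colliding pair has $\ord(\mu)=\ord(\nu)=n$ and that the difference $\mu-\nu$ is confined to a window of $O(N)$ indices just below $n$: any longer spread would exhibit a common boundary with agreement above, splitting off a shorter collision. Write $d=\mu-\nu\ne0$, so that $\sum d_kQ_k=0$ with $\nu$ and $\nu+d$ both legal and $d$ supported within the topmost few $N$-blocks of $[1,n]$.

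The hard part, and the step I expect to be the main obstacle, is the \emph{fold-down}: assuming $n>4N$, produce a strictly shorter colliding pair. The idea is that once one is more than $N$ indices above the bottom, both the legality constraints near the top (governed by the $\delta^m$) and the recurrence for $Q$ are invariant under an upward shift by $N$. So I would cut $\mu$ and $\nu$ at a common block boundary placed a couple of periods below $n$ — low enough that the disagreement window and a full period of slack lie above the cut, yet high enough that the bottom segment $[1,N]$, where shift-invariance of the recurrence fails, is untouched — and transplant the top portions of both \cf s downward by $N$. Periodicity of the $\delta^m$ and locality of properness keep every prefix legal and every block proper, while repeated use of $Q_m=e_1Q_{m-1}+\dots+e_NQ_{m-N}$ turns the transplanted sums back into the same common value; the outcome is a colliding pair of order $n-N$, contradicting minimality. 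The constant $4N$ is exactly the room this requires: one period to bring the block straddling the cut into a legal phase, one for the disagreement window above it, and two more as a buffer so the recurrence may be applied throughout the transplant without reaching index $N$ or below. The delicate points — handling a block that straddles the cut, and checking that the single transplant operation simultaneously preserves legality of all prefixes and the value of $\sum(\cdot)Q$ — are precisely where the exact $N$-periodicity of $\delta^m$ and the locality of the block conditions are indispensable. Once $n\le 4N$ is forced, the contrapositive, and with it the theorem, is complete.
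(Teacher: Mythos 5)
Your trivial direction and the truncation-at-a-common-block-boundary reductions are fine, but the two load-bearing steps of the contrapositive fail. First, the localization claim is false: a minimal colliding pair need not have equal orders, and its disagreement is not confined to an $O(N)$ window just below the top. Because the recurrence constrains $Q_n$ only for $n>N$ and the initial values $Q_1,\dots,Q_N$ are arbitrary (the sequence need not even be increasing), a collision whose top disagreement index $K$ is large forces the smaller side to carry a chain of recurrence coefficients reaching from just below $K$ all the way down to an index $\le 3N$, with further disagreement inside the initial segment; this is exactly the structure the paper establishes in Theorem \ref{thm:unique-2}, Part 2. In the paper's example $L=(11,3)$, $(Q_1,Q_2)=(19,3)$, the collision $Q_7 = 11Q_6+2Q_5+11Q_4+Q_3+3Q_2+9Q_1$ has disagreement spanning all of $[1,7]$, so there is no cut ``a couple of periods below $n$'' above which the disagreement lives and below which the two \cf s agree.

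Second, the fold-down itself is not a legitimate operation: translating the portions of $\mu$ and $\nu$ above the cut downward by $N$ does not preserve $\sum(\cdot)\,Q$, and the recurrence cannot restore the equality, because the linear relations among the $Q_k$ that create collisions are accidental relations coming from the initial values and are not shift-invariant. Concretely, with $L=(11,3)$ and $(Q_1,Q_2)=(19,3)$ one has $2Q_3=3Q_2+9Q_1$ but $2Q_5\ne 3Q_4+9Q_3$. What makes the paper's reduction to order $\le 4N$ work is not the $N$-periodicity of the \immpred s but the fact that the long chain forced inside the smaller \cf\ consists exactly of the recurrence coefficients, so that $Q_K$ can be unfolded along it down to $Q_{\ell+1}$ with $\ell+1\le 4N$; proving that this chain must be present, that the larger \cf\ vanishes on $[2N,K)$, and that the leftover lives below $3N$ requires the quantitative inequalities of Lemma \ref{lem:cascade} (valid only above index $N$ and using $e_k\ge 1$). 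That analysis, carried out in Theorem \ref{thm:unique-2}, is the substance of the proof and has no counterpart in your argument.
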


Let us demonstrate the theorem with $L=(2,3)$ and
$Q=(5,3,21,\dots)$.
As mentioned earlier,
the property
$Q_2 > 2 Q_1$ guarantees that
the sequence has the \Lunique, but
the inequality fails for $Q$.
However, computer calculations show that
there are 6,560 \Lz\ \cf s $\mu$ of
order $\le 8$, and they generate distinct values of $\sum \mu Q$.
Hence, by Theorem \ref{thm:unique},
it has the \Lunique.
See Section \ref{sec:proof-unique-property} for example of a sequence that does not have the \Lunique.
If $\cL$ is the $N$th order \zec\ condition,
the search can be shortened to the \cf s of order $\le 2N$; see \cite{chang}.

\begin{example}\label{exm:converse-fail}\rm
Let us demonstrate the the full converse of \zec's theorem fails for some
\zec\ conditions, i.e.,
there are two fundamental sequences for $\nat$ under the \zec\ condition.
Let $\cE$ be the \zec\ collection of   \cf s defined in
Example \ref{exm:zero-coeff},
and let $Q$ and $Z$ be sequences in $\nat$
defined by the recurrence $Q_n = 7Q_{n-3}$ and $Z_n = 7 Z_{n-3}$
such that $Q=(1,2,3,\dots)$ and $Z=(2,1,3,\dots)$.
Then, both sequences are fundamental sequences for the $\cE$-set of numbers
$\nat$, and $Z_{1+3n} > Z_{2+3n}$ for all $n\ge 0$.
\end{example}

Let us introduce our results for the unit interval $\OI$ of real numbers.
\begin{theorem} \label{thm:real} \phantom{nothing}
\begin{enumerate}
\item
(\zec's Theorem for the unit interval)
Let $Q$ be a decreasing sequence in $\OI$, and let $Q_0=1$.
\begin{enumerate}
\item If $Q_n\to0$
as $n\to\infty$, then there are   \cf s $\mu^n$
of order $n$
with infinite support for $n\ge 1$ such that
\begin{equation}\label{eq:full-recursion-R}
Q_{n-1}=\sum\mu^n Q
\end{equation}
for all $n\ge 1$.
\item Let $\cE$ be a \zec\ collection  for $\OI$, and let
$\bbeta^n$ for $n\ge 1$ be its maximal \cf s of order $n$.
Then, $Q$ is a decreasing fundamental sequence for the $\cE$-interval $\OI$
if and only if $Q$ satisfies (\ref{eq:full-recursion-R}) for all $n\ge 1$ where $\mu^n=\bbeta^n$ for each integer $n\ge 1$.

\end{enumerate}

\item (The weak converse)
Let $\cL$ be the \zec\ collection of   \cf s for $\OI$ determined by a \zml\
$L=(e_1,\dots,e_N)$, and let $\ome$ be the (only) positive real zero of the polynomial
$e_Nx^N +\cdots + e_1x - 1$.
Then, $\OI$ is an $\cL$-set of numbers, and $\OI$ has one and only one decreasing
fundamental sequence $Q$ given by $Q_k = \ome^k$ for all $k\ge 1$.

\end{enumerate}

\end{theorem}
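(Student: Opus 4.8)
The plan is to prove the three parts of Theorem~\ref{thm:real} in turn; parts~(1b) and~(2) are the substantial ones.

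\emph{Part~(1a).} For fixed $n\ge1$ build $\mu^n$ by a greedy algorithm on the tail of $Q$: set $\mu^n_k=0$ for $k<n$, put $r_n:=Q_{n-1}$, and for $k=n,n+1,\dots$ let $\mu^n_k$ be the largest non-negative integer with $\mu^n_kQ_k\le r_k$, $r_{k+1}:=r_k-\mu^n_kQ_k$; if this would yield $r_{k+1}=0$ with $\mu^n_k\ge1$, decrease $\mu^n_k$ by $1$ instead, so $r_{k+1}=Q_k>0$. Then $0\le r_{k+1}\le Q_k\to0$ forces $\sum_{k\ge n}\mu^n_kQ_k=Q_{n-1}$, the decrease step keeps the support infinite, and $Q_{n-1}>Q_n$ gives $\mu^n_n\ge1$, so $\ord(\mu^n)=n$. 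This proves~(\ref{eq:full-recursion-R}).

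\emph{Part~(1b), ``recursion $\Rightarrow$ fundamental sequence''.} Assume $Q_{n-1}=\sum\bbeta^nQ$ for all $n\ge1$. First $Q_n\to0$: if $Q_n\to c>0$ then, since $\bbeta^n$ has infinite support, $Q_{n-1}=\sum_{k\ge n}\bbeta^n_kQ_k\ge c\cdot\#\{k\ge n:\bbeta^n_k\ge1\}=\infty$, absurd. By Corollary~\ref{cor:generate-zec-real}, $\cE$ is exactly the set of block decompositions $\sum_m\zeta^m$ (proper $\bbeta$-blocks $\zeta^m$ at index $n_m$ with support $[n_m,i_m]$, $n_1=1$, $i_m+1=n_{m+1}$). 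For such a block, using $\sum\bbeta^nQ=Q_{n-1}$ and $\bbeta^n_i-\zeta_i\ge1$,
\[
\textstyle\sum\zeta Q=Q_{n-1}-(\bbeta^n_i-\zeta_i)Q_i-\sum_{k>i}\bbeta^n_kQ_k\le Q_{n-1}-Q_i,
\]
strictly once $\zeta\ne0$; summing telescopes to $\sum\delta Q\le\sum_m(Q_{i_{m-1}}-Q_{i_m})\le Q_0=1$ (with $i_0:=0$), so $X_Q\subseteq\OI$. For surjectivity and injectivity run a greedy block algorithm: given $x\in(0,1)$, let $\zeta^1$ be the lexicographically largest proper $\bbeta^1$-block with $\sum\zeta^1Q\le x$; since $\sum\bbeta^1Q=1>x$ its support $[1,i_1]$ is finite, maximality gives $x-\sum\zeta^1Q<Q_{i_1}=\sum\bbeta^{i_1+1}Q$, so the construction continues at index $i_1+1$ and yields $x=\sum_m\sum\zeta^mQ$ with $\sum_m\zeta^m\in\cE$ by Theorem~\ref{thm:block-R}(2); this gives $\OI\subseteq X_Q$. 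Conversely, if $\delta=\sum_m\zeta^m\in\cE$ then $\sum_{m\ge2}\sum\zeta^mQ<Q_{i_1}$ while any proper $\bbeta^1$-block $\eta$ that is $\dord$-larger than $\zeta^1$ satisfies $\sum\eta Q\ge\sum\zeta^1Q+Q_{i_1}>\sum\delta Q$, so $\zeta^1$ must be the greedy choice for $x:=\sum\delta Q$; iterating, $\delta$ equals the (unique) greedy expansion of $x$, which is the unique $\cE$-representation property. Hence $Q$ is a decreasing fundamental sequence for the $\cE$-interval $\OI$.

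\emph{Part~(1b), ``fundamental sequence $\Rightarrow$ recursion''.} Let $Q$ be a decreasing fundamental sequence for the $\cE$-interval $\OI$; then $X_Q=\OI$ forces $Q_n\to0$ (else $X_Q$ would miss $(0,\inf_kQ_k)$, as $\sum\delta Q\ge Q_{\ord(\delta)}$ for $\delta\ne0$). The engine — and the main obstacle of the theorem — is the order-compatibility of the value map $v_M(\gamma):=\sum_{k\le M}\gamma_kQ_k$: for every $\delta\in\cE$ and every index $n$, $\sum_{k>n}\delta_kQ_k<Q_n$, equivalently $v_M$ is strictly increasing on each truncation $\cE^M$. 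I would prove this by induction along the well-ordered enumeration of $\cE^M$ supplied by Lemma~\ref{lem:seq}, controlling the increment from an element to its successor via the explicit successor formula in Definition~\ref{def:immsucc-R}(3) and the block decomposition of Theorem~\ref{thm:block-R}. Granting it, $\bbeta^n\reS M$ — the \immpred\ of $\beta^{n-1}\reS M$ in $\cE^M$, or the maximal element when $n=1$ — realizes the largest value in $\cE^M$ strictly below $Q_{n-1}=\sum\beta^{n-1}Q$; since $\{\sum\delta Q:\delta\in\cE\}$ is all of $\OI$, for any $\xi<Q_{n-1}$ some $\delta\in\cE$ has $\sum\delta Q\in(\xi,Q_{n-1})$, so $v_M(\bbeta^n\reS M)\ge v_M(\delta\reS M)>\xi$ for large $M$; letting $M\to\infty$ squeezes $Q_{n-1}\ge\sum\bbeta^nQ\ge Q_{n-1}$, i.e. $Q_{n-1}=\sum\bbeta^nQ$.

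\emph{Part~(2).} By Part~(1b) it suffices to show the system $Q_{n-1}=\sum\bbeta^nQ$ $(n\ge1,\ Q_0:=1)$ over decreasing sequences $Q$ in $\OI$ has the unique solution $Q_k=\ome^k$; here $\ome\in(0,1)$ is the unique positive zero of $p(x)=e_Nx^N+\cdots+e_1x-1$ ($p(0)=-1<0\le p(1)$, $p$ strictly increasing on $(0,\infty)$), and $\bbeta^n=\sum_{k\ge n}a_{k-n+1}\beta^k$ with $a_j:=\hat e_{\rem(j)}$ of period $N$. \emph{Existence.} For $Q_k=\ome^k$,
\[
\textstyle\sum\bbeta^nQ=\ome^{n-1}\sum_{j\ge1}a_j\ome^{j}=\frac{\ome^{n-1}}{1-\ome^{N}}\sum_{i=1}^N\hat e_i\ome^{i}=\frac{\ome^{n-1}}{1-\ome^{N}}\Bigl(\sum_{i=1}^Ne_i\ome^{i}-\ome^{N}\Bigr)=\ome^{n-1}=Q_{n-1},
\]
and this $Q$ is decreasing in $\OI$, so by Part~(1b) it is a decreasing fundamental sequence and $\OI=X_Q$ is an $\cL$-set. \emph{Uniqueness.} For any decreasing solution $Q$, subtracting $Q_{m+N}=\sum_{j\ge1}a_jQ_{m+N+j}=\sum_{j\ge N+1}a_jQ_{m+j}$ from $Q_m=\sum_{j\ge1}a_jQ_{m+j}$ and using $a_{j+N}=a_j$, $\hat e_N=e_N-1$, gives $Q_m=\sum_{j=1}^Ne_jQ_{m+j}$ for all $m\ge0$. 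This linear recurrence has characteristic polynomial $p$, whose reversal is $x^N-e_1x^{N-1}-\cdots-e_N$; the companion matrix of the latter is non-negative and (since $e_1,e_N>0$) primitive, so by Perron--Frobenius it has a unique root $1/\ome$ of maximal modulus, simple and positive; hence $\ome$ is the unique root of $p$ of minimal modulus, simple and positive. If $Q_m=\sum_i\pi_i(m)\rho_i^m$ (over the distinct roots $\rho_i$ of $p$, $\pi_i$ polynomials) had a nonzero contribution from some $\rho_i\ne\ome$, then among the active roots of maximal modulus $\rho_{\max}>\ome$ none is positive real (the only positive real root of $p$ is $\ome$), so $Q_m/\rho_{\max}^{m}$ would be, up to a power of $m$, asymptotic to a nonzero real exponential polynomial of mean zero, which is negative on a set of positive density — contradicting $Q_m>0$; the same argument forces $\ome$ to be active. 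Therefore $Q_m=c\,\ome^m$, and $Q_0=1$ gives $c=1$, so $Q_k=\ome^k$, completing the proof.
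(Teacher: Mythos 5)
Your Part 1(a), the ``recursion $\Rightarrow$ fundamental sequence'' half of Part 1(b), and the skeleton of Part 2 are sound, and in Part 2 your Perron--Frobenius/positivity argument is a legitimate replacement for the paper's route (Rouch\'e plus the cosine--gcd Lemma~\ref{lem:circle} plus the cited Theorem~\ref{thm:daykin-2} of Daykin). The genuine gap is in the ``fundamental sequence $\Rightarrow$ recursion'' half of Part 1(b) --- exactly the step you call the main obstacle and then leave as a sketch. You reduce everything to the ``engine'' that $v_M(\gamma)=\sum_{k\le M}\gamma_kQ_k$ is strictly increasing on each $\cE^M$, and propose to prove it ``by induction along the enumeration of $\cE^M$ via the successor formula.'' That induction is circular: the only nontrivial successor steps are the carry steps $\mu\mapsto\resab{[1,n]}(\mu)+\beta^n$ with $\mu\equiv\bbeta^{n+1}\reS(n,M]$, whose value increment is $Q_n-\sum_{k=n+1}^{M}\bbeta^{n+1}_kQ_k$; positivity of this increment is precisely the inequality the engine asserts (indeed, since $\ord(\bbeta^{n+1})=n+1$, it is literally the increment at the earlier step from $\bbeta^{n+1}\reS M$ to its successor $\beta^n\reS M$), and it does not follow from the induction hypothesis, which only gives monotonicity among lexicographically smaller elements and says nothing comparing $v_M(\bbeta^{n+1}\reS M)$ with $Q_n$. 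So as described the induction cannot get off the ground; moreover the asserted equivalence between your two formulations of the engine is also left unproven.

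What is actually needed here is an input from the fundamental-sequence hypothesis itself, not just the order combinatorics of $\cE^M$. The paper supplies it in Lemma~\ref{lem:inequality-1}: assuming $\sum\bbeta^nQ>Q_{n-1}$, one picks the largest proper block $\zeta$ at index $n$ with $\sum\zeta Q\le Q_{n-1}<\sum(\zeta+\beta^M)Q$, uses uniqueness to make the first inequality strict, uses existence to expand the remainder $Q_{n-1}-\sum\zeta Q<Q_M$ by a coefficient function of order $>M$, and thereby manufactures a second $\cE$-expansion of $Q_{n-1}$, contradicting uniqueness; Lemma~\ref{lem:telescoping-sum} and Proposition~\ref{prop:max-sum} then upgrade $\sum\bbeta^nQ\le Q_{n-1}$ to equality (and give the strict bound $\sum\delta Q<Q_{n-1}$ for $\delta\in\cE$ of order $n$, which is your tail inequality). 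Your final squeeze ``$Q_{n-1}\ge\sum\bbeta^nQ\ge Q_{n-1}$'' is fine once such a result is in hand, but without a proof of the engine (or of Lemma~\ref{lem:inequality-1}-type statements) the only-if direction of Part 1(b) --- and hence also the uniqueness claim in Part 2, which relies on every decreasing fundamental sequence satisfying the recursion --- is not established.
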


Recall from the introduction the question on the weak converse for the $\cL$-set of numbers
$\OI$ where $L=(1,1)$.
Theorem \ref{thm:real}, Part 1 (b)
provides an affirmative answer to the question, and it is
given by $Q_k=\ome^k$ for all $k\ge 1$ where $\ome$ is the reciprocal of the
golden ratio.
In fact, the weak converse for the $\cL$-set of numbers $\OI$ is proved in
\cite{daykin3} for the case of
$L=(1,0,0,\dots,0,1)$ where $e_2=\cdots = e_{N-1}=0$.
Our idea is similar to \cite{daykin3},
and our proof relies on \cite[Theorem 1]{daykin3}, but also we improve
\cite[Theorem 1]{daykin3} in Proposition \ref{prop:dominant} below.
We prove the proposition in Section \ref{sec:proofs} in a more general setting.

As demonstrated in Theorem \ref{thm:neg-coeff},
if $\bbeta^n$ for $n\ge 1$ have common \zquote{tails},
it admits a short recursion.
Let $\bbeta^n:= 2\beta^n
+ \sum_{k=n+1}^\infty \beta^k$, which is an analogy of Example \ref{exm:neg-coeff} for $\OI$.
Then, if exists, a decreasing fundamental sequence must satisfy
$Q_n = 3 Q_{n+1} - Q_{n+2}$.
Its characteristic polynomial is $x^2-3x+1$, and
if we impose the decreasing property to Binet's Formula for $Q_n$, we
find that $Q_k = \ome^k$ for
$k\ge 1$ where $\ome=\tfrac12(3-\sqrt{5})$ must be the case.
Since $Q$ satisfies (\ref{eq:full-recursion-R}), by Theorem \ref{thm:real}, Part 1 (b), it is a decreasing fundamental sequence, and in particular,
it is the only decreasing fundamental sequence.
For Theorem \ref{thm:real}, Part 2 originally we assumed the condition $e_1\le \cdots\le e_N$ to
obtain certain inequalities between the values of $\sum \ep Q$, which seemed necessary to derive the equality (\ref{eq:full-recursion-R}),
but later we could derive the inequality without the assumption.
The increasing condition on $e_k$ reminds us of the Parry condition; see
\cite{miller:minimality}, \cite{frougny}, and
\cite{parry}.
As noted in \cite{miller:minimality},
prior to the work of \cite{mw},
the \Lz's theorem was only known in the case where the \zml\ satisfies the Parry condition.

For Theorem
\ref{thm:real}, Part 1 (a),
we use the greedy algorithm that is similar to Algorithm \ref{alg:E} to find \cf s $\mu^n$ for $n\ge 1$.
Given an index $n\ge 0$, recursively define $e_k$ for $k=1,2,3,\dots$ to be the largest non-negative integer such that $
Q_n - (e_1 Q_{n+1} +\cdots + e_{k-1} Q_{n+k-1})> e_k Q_{n+k} $; the strict inequality allows us to find infinitely many indices $k$ with $e_k\ge 1$.
Then, $\mu^{n+1} =\sum_{k=1}^\infty e_k \beta^{n+k}$, and
$Q$ satisfies the equality (\ref{eq:full-recursion-R}).

\begin{example}\label{exm:R-greedy}\rm
Let $Q$ be a sequence given by $Q_k=1/(k+1)$ for all $k\ge 0$.
Then,
\begin{gather}
Q_{n-1}=Q_{n} + Q_{n^2+n-1}\quad\text{ for $n\ge 1$},
\label{eq:Qn-id}
\end{gather}
and
if we repeatedly apply the recurrence to the last term of the RHS of (\ref{eq:Qn-id}), we
obtain $Q_{n-1} = Q_{m_1} + Q_{m_2} + Q_{m_3}+\cdots$ where $m_1=n$ and
$m_{k+1} = m_k(m_k+1)$ for $k\ge 1$.
\iffalse
if we apply the recurrence to $Q_{n^2+n-1}$, then
$Q_{n-1}=Q_{n} + Q_{n^2+n}+Q_{t^2+t-1}$ where
$t=n^2+n$, i.e., $Q_{n-1}=Q_{n } + Q_{n(n+1)}+Q_{t^2+t-1}$.
If we continue unfolding the last terms using the recurrence, we find
$Q_{n-1} = Q_{m_1} + Q_{m_2} + Q_{m_3}+\cdots$ where $m_1=n$ and
$m_{k+1} = m_k(m_k+1)$ for $k\ge 1$.
\fi
Let $\cE$ be the \zec\ collection for $\OI$
determined by $\bbeta^{n} =\sum_{k=1}^\infty \beta^{m_k}$ for $n\ge 1$, e.g.,
$\bbeta^3 = \beta^3 + \beta^{12} + \beta^{156}+\cdots$.
Then,
$Q_{n-1} = \sum \bbeta^n Q$ for $n\ge 1$, and
hence, by Theorem \ref{thm:real}, Part 1 (b),
$Q$ is a decreasing fundamental sequence for the $\cE$-set of numbers $\OI$.
Moreover, if we use the identity (\ref{eq:Qn-id}) to apply the greedy algorithm described above that requires a strict inequality,
we find that the expression $Q_{n-1} = \sum \bbeta^n Q$ for $n\ge 1$
coincides with the one obtained by the greedy algorithm.
Thus, the expansion of each real number obtained by applying the greedy algorithm with $Q$ makes a unique $\cE$-expansion.
For example, recall the well-known expansion
$\frac\pi8=\sum_{k=0}^\infty \frac1{(4k+1)(4k+3)}=\frac13+
\frac1{35}+\frac1{63}+\cdots+=Q_2 + Q_{34} + Q_{62}+\cdots$
, but since $34\cdot 35 =1190$, it is not \Ez.
The greedy algorithm yields
$$\frac \pi8\approx Q_2 + Q_{16} + Q_{1844} + Q_{4683104} +\cdots
=\frac13 +\frac1{17}+\frac1{3^2\cdot 5 \cdot 41}
+\frac1{3^2\cdot 5 \cdot 7 \cdot 14867}+\cdots
$$
\iffalse
$$\frac1\pi \approx Q_3 + Q_{14} + Q_{608} + Q_{845028} +\cdots
$$
\fi
where $\beta^2 + 0\cdot \beta^{6}$ is the first non-zero proper \Ebb,
and this is the only way of expressing $ \pi/8$ under the \Ez\ condition.
The process of finding non-zero terms of $Q$ is similar to that of continued fraction expansions, i.e., $y:=\frac \pi8-(\frac13+\frac1{17})$ and
$ 1/y \approx 1844.27$.
In fact, the \zec\ collection $\cE$ enjoys the finite expansions of all rational numbers in $\OI$ as in the continued fraction expansion.
\end{example}
Let us introduce our results for $p$-adic integers.
A sequence $Q$ in $\zz_p$ is {\it decreasing}
if $\abs{Q_k}_p > \abs{Q_{k+1}}_p$.
\begin{theorem}\label{thm:p-adic}
Let $p$ be a prime number.

\begin{enumerate}
\item (\zec's Theorem for $p$-adic integers)
Let $\cE$ be an arbitrary collection of \cf s $\ep$ such that
$\ep_n <p$ for all $n\ge 1$.
If $Q$ is a decreasing sequence in $\zz_p$, then
$Q$ has the unique $\cE$-representation property.

\item (The weak converse)
Let $\cE_0$ be a \zec\ collection   for positive integers, and let $\cE$ be the completion
of $\cE_0$.
If $\ep_n\le \min\set{\sqrt p, (p-1)/2}$ for all $n\ge 1$ and all $\ep\in\cE$, then
each decreasing $\cE$-subset of $\zz_p$ has a unique decreasing fundamental sequence.
\end{enumerate}

\end{theorem}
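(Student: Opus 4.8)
The plan is to dispose of Part 1 with a one-line valuation estimate and to put all the work into Part 2. Since $Q$ is decreasing, $v_p(Q_1)<v_p(Q_2)<\cdots$ is a strictly increasing sequence of non-negative integers, so every series $\sum\ep Q=\sum_k\ep_kQ_k$ converges in $\zz_p$. For a \cf\ $\ep$ with all coordinates $<p$ and least non-zero index $n_0$, the term $\ep_{n_0}Q_{n_0}$ has valuation exactly $v_p(Q_{n_0})$ (since $p\nmid\ep_{n_0}$) while every later term has strictly larger valuation; hence $v_p(\sum\ep Q)=v_p(Q_{n_0})$ and, more precisely, $\sum\ep Q\equiv\ep_{n_0}Q_{n_0}$ modulo $p^{v_p(Q_{n_0+1})}$. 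For distinct $\ep,\ep'\in\cE$ I would apply the same estimate to the coordinatewise difference, whose least non-zero coordinate lies strictly between $-p$ and $p$, concluding that $v_p(\sum\ep Q-\sum\ep'Q)$ is finite, so the two values differ; this gives Part 1. The displayed congruence is the only tool I will need for Part 2.

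For Part 2, write $\ord(\ep)$ for the least non-zero index of $\ep$, fix a decreasing $\cE$-subset $Y$, and take two decreasing fundamental sequences $Q,Z$ with $X_Q=X_Z=Y$; the goal is $Q=Z$. I would first note $\set{v_p(y):y\in Y}=\set{v_p(Q_k):k\ge1}=\set{v_p(Z_k):k\ge1}$ (forward inclusions by the valuation formula, reverse ones because $Q_k,Z_k\in Y$), so all three are the same strictly increasing list $v_1<v_2<\cdots$ and $v_p(Q_k)=v_p(Z_k)=v_k$ for every $k$. Next comes the base step: for each $k$ the set $Y_k:=\set{y\in Y:v_p(y)=v_k}$ equals $\set{\sum\ep Q:\ep\in\cE,\ \ord(\ep)=k}$, and since the tail of such a sum has valuation $>v_k$, dividing by $p^{v_k}$ and reducing mod $p$ identifies the image of $Y_k$ in $\mathbb{F}_p$ with $u_kA_k$, where $u_k:=p^{-v_k}Q_k$ and $A_k:=\set{\ep_k\bmod p:\ep\in\cE,\ \ord(\ep)=k}\subseteq\set{1,\dots,\lfloor\sqrt p\,\rfloor}$. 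The identical computation with $Z$ presents the same set $Y_k$ as $u'_kA_k$ with $u'_k:=p^{-v_k}Z_k$. Because $\beta^k\in\cE$ we have $1\in A_k$, so $u_k\in u'_kA_k$ and $u'_k\in u_kA_k$ yield $a,b\in A_k$ with $u_k\equiv u'_kb$ and $u'_k\equiv u_ka$, whence $ab\equiv1\pmod p$; but $1\le ab\le\lfloor\sqrt p\,\rfloor^2<p$ (a prime is never a square), so $a=b=1$ and $u_k\equiv u'_k\pmod p$, i.e.\ $v_p(Q_k-Z_k)\ge v_k+1$ for all $k$.

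The final step is an extremal argument. Assuming $Q\ne Z$, I would set $d:=\min\set{v_p(Q_j-Z_j)-v_j:Q_j\ne Z_j}$, a positive integer by the base step, realized at some index $k$, and expand $Q_k\in Y=X_Z$ in its unique $Z$-representation $Q_k=\sum\ep'Z$. Matching valuations forces $\ord(\ep')=k$, and the congruences $Q_k\equiv\ep'_kZ_k$ and $Q_k\equiv Z_k$ modulo $p^{v_k+1}$ force $\ep'_k=1$, so $Q_k-Z_k=\sum_{j>k}\ep'_jZ_j$. Let $\ell>k$ be the least index with $\ep'_\ell\ne0$; then $v_\ell=v_p(Q_k-Z_k)=v_k+d$ and $Q_k-Z_k\equiv\ep'_\ell Z_\ell\pmod{p^{v_\ell+1}}$. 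Running the same argument with $Q$ and $Z$ interchanged produces the same index $\ell$ (it is pinned down by the valuation $v_k+d$), together with an integer $\ep_\ell\ge1$ and $Z_k-Q_k\equiv\ep_\ell Q_\ell\pmod{p^{v_\ell+1}}$. Adding the two congruences and using $Q_\ell\equiv Z_\ell\pmod{p^{v_\ell+1}}$ (by minimality of $d$, or trivially if $Q_\ell=Z_\ell$) would give $(\ep'_\ell+\ep_\ell)Z_\ell\equiv0\pmod{p^{v_\ell+1}}$, hence $p\mid\ep'_\ell+\ep_\ell$; but $2\le\ep'_\ell+\ep_\ell\le p-1$ since each summand is $\le(p-1)/2$, a contradiction. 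Therefore $Q=Z$.

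The routine parts are the $p$-adic convergence and the valuation bookkeeping. The hard part will be the last step: I must make sure that the $Z$-representation of $Q_k$ and the $Q$-representation of $Z_k$ truly \emph{detect a common index} $\ell>k$ and produce there two positive coefficients whose sum is forced into the open interval $(0,p)$, because that is exactly where the quantitative hypotheses are spent ($\ep_n\le\sqrt p$ in the base step, $\ep_n\le(p-1)/2$ in the extremal step). A secondary point to verify is that the argument never uses more about $\cE$ than that it contains every $\beta^k$ and has all coordinates bounded by $\min\set{\sqrt p,(p-1)/2}<p$, so that the hypothesis that $\cE$ is the completion of a \zec\ collection is invoked only through these two facts.
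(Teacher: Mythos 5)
Your proof is correct. Part 1 is essentially the paper's argument (the paper matches orders and then coefficients one index at a time; you compress this by applying the valuation estimate to the coordinatewise difference, which is equivalent). For Part 2 you take a genuinely different route. The paper proves your valuation-matching step as Lemma \ref{lem:ord-Q-Z}, but then invests in the block machinery: Lemma \ref{lem:p-adic-decomposition} (tails of $\cE$-\cf s stay in $\cE$) and Lemma \ref{lem:QnZn} show that each $Z_n$ is represented in $Q$ by a \emph{single} \Eb\ with leading coefficient $1$, and $Q_n=Z_n$ is then deduced index by index by cancelling the remaining coefficients modulo powers of $p$, invoking Lemma \ref{lem:QnZn} again at index $n+1$. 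You bypass the block structure entirely: your set identity $u_kA_k=u'_kA_k$ giving $Q_k\equiv Z_k\pmod{p^{v_k+1}}$ is the same $\sqrt p$ trick the paper uses to force $\zeta^1_n\al_n=1$ inside Lemma \ref{lem:QnZn}, and your minimal-defect argument with $d$ spends the $(p-1)/2$ bound at the common index $\ell$ exactly where the paper spends it in its final congruence $(\xi_{n+1}+\zeta_{n+1})Q_{n+1}\equiv 0$. The trade-off is clear: the paper's route yields the extra structural fact that the $Q$-expansion of each $Z_n$ is a single block (which genuinely needs the \zec\ hypothesis on $\cE_0$), while your route, as you observe, uses only that $\cE$ contains all basis \cf s and obeys the coefficient bound, so it proves a slightly more general statement for arbitrary collections; the price is that without the single-block lemma you cannot conclude $Q_n=Z_n$ locally and must run the global extremal argument, which is sound as written (the minimum $d$ exists over a nonempty set of positive integers, the index $\ell$ is pinned down by the strictly increasing valuations, and the minimality of $d$ is what licenses $Q_\ell\equiv Z_\ell\pmod{p^{v_\ell+1}}$).
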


Let $p=41$.
Then, the golden ratio $\phi$ is defined in the $p$-adic integers $\zz_p$, and let us consider the sequence $Q$ in $\zz_p$ given by $Q_k = (\phi^k + 3\bar\phi^k) p^{k-1}$.
If we consider the \zml\ $L=(1,1)$ as for the Fibonacci sequence,
by Theorem \ref{thm:p-adic},
$Q$ is the only decreasing fundamental sequence for the $\cL$-subset $X_Q$, and this can be considered a $p$-adic analogue of the \fib\ sequence in terms of the weak converse.
It satisfies the recurrence $Q_{n+2} = pQ_{n+1} + p^2 Q_n$ for all $n\ge 1$, but $X_Q\ne \zz_p$.
The sequences $Z$ defined by the classical \fib\ recurrence $Z_{n+2} = Z_{n+1} + Z_n$ make some $\sum \ep Z$ divergent in $\zz_p$.

The weak converse fails for the $\cL$-set of numbers $\zz_p$
if the values of \cf s are too large, so it does not satisfy the condition of Theorem \ref{thm:p-adic}, Part 2.
Let $Q$ be the sequence given by $Q_k=p^{k-1}$ for $k\ge 1$, and
let $L=(p-1,p)$.
The standard $p$-adic expansion of $\zz_p $ makes an
$\cL$-expansion $\sum \ep Q$, and it has the \Lunique\
by Theorem \ref{thm:p-adic}, Part 1.
However, if $Z_k = \big( p(p-1) \big)^{k-1}$, then
$\ord_p(Z_k)=k-1$ implies that
each $p$-adic integer is equal to $\sum\ep Z$
for a unique \clz\ \cf\ $\ep$.
Thus, there are two distinct decreasing fundamental sequences
for $\zz_p $, and hence, the weak converse fails for $\zz_p$ under that \clz-condition.

In proving Theorem \ref{thm:real}, Part 2,
we establish the following result under a more general condition on the coefficients.
The result is also found in \cite[Theorem 12.2]{ostrowski}, but
our proof is more detailed, and we introduce the proof in Section \ref{sec:real}.
 
If a polynomial $f(z)$ over the complex numbers
has a unique complex root with largest modulus,
it is called a {\it dominant polynomial}.
\begin{prop}\label{prop:dominant}
Let $f(z)=a_n z^n -a_{n-1}z^{n-1}-\cdots - a_1 z - a_0$ be a polynomial in $\real[z]$
such that
$a_k\ge 0$ for all $0\le k\le n$ and $a_na_0 \ne 0$.
If there are indices $m$ and $\ell$ such that $1 \le m<\ell\le n-1$, $\gcd(m,\ell)=1$, and $a_m a_\ell \ne 0$,
then $f(z)$ is a dominant polynomial.
\end{prop}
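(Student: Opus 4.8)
The plan is to identify the dominant root as the unique positive real root of $f$, and then to rule out any other complex root of the same modulus by invoking the equality case of the triangle inequality, where coprimality of the two middle indices is exactly what forces equality to be attained only at the real root.

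\textbf{Step 1: a unique positive real root $\rho$ that bounds all others.} For $x>0$ I would write $f(x)/x^n = a_n - h(x)$ with $h(x)=\sum_{k=0}^{n-1} a_k x^{k-n}$. Each term $a_k x^{k-n}=a_k x^{-(n-k)}$ is a non-increasing function of $x>0$, and the term $a_0 x^{-n}$ is strictly decreasing since $a_0>0$; hence $h$ is a strictly decreasing continuous bijection from $(0,\infty)$ onto $(0,\infty)$. So $h(x)=a_n$ has a unique solution $\rho>0$, with $f(x)<0$ on $(0,\rho)$, $f(\rho)=0$, and $f(x)>0$ on $(\rho,\infty)$. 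For any complex root $z$ (so $z\ne 0$, as $a_0\ne 0$), the identity $a_n z^n=\sum_{k=0}^{n-1}a_k z^k$ gives $a_n\lvert z\rvert^n\le\sum_{k=0}^{n-1}a_k\lvert z\rvert^k$, i.e.\ $f(\lvert z\rvert)\le 0$, which forces $\lvert z\rvert\le\rho$ because $f>0$ on $(\rho,\infty)$.

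\textbf{Step 2: $\rho$ is the only root of modulus $\rho$.} Suppose $z$ is a root with $\lvert z\rvert=\rho$. Then
\[
a_n\rho^n=\Bigl\lvert\sum_{k=0}^{n-1}a_k z^k\Bigr\rvert\le\sum_{k=0}^{n-1}a_k\rho^k=a_n\rho^n,
\]
so equality holds in the triangle inequality, whence all nonzero summands $a_k z^k$ share a common argument. The summands $a_0 z^0=a_0$, $a_m z^m$, $a_\ell z^\ell$ are all nonzero (this is where the hypotheses $a_0\ne0$, $1\le m<\ell\le n-1$, $a_m a_\ell\ne0$ enter), and since $a_0>0$ the common argument must be $0$. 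Hence $z^m$ and $z^\ell$ are positive reals. Writing $z=\rho e^{2\pi i t}$, this says $mt\in\zz$ and $\ell t\in\zz$; choosing $u,v\in\zz$ with $um+v\ell=1$ (possible as $\gcd(m,\ell)=1$) gives $t=u(mt)+v(\ell t)\in\zz$, so $z=\rho$. Therefore $\rho$ is the unique root of maximal modulus and $f$ is dominant.

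\textbf{Expected main obstacle.} Step 1 is routine monotonicity. The substantive point is Step 2: one must make sure the terms whose arguments are pinned down actually occur among the coefficients of $f$ and are nonzero, which is precisely guaranteed by the index range $1\le m<\ell\le n-1$ together with $a_m a_\ell\ne0$ and $a_0\ne0$; and then coprimality of $m$ and $\ell$ is exactly what upgrades ``$z^m$ and $z^\ell$ are positive reals'' to ``$z=\rho$,'' rather than merely $z^{\gcd(m,\ell)}=\rho^{\gcd(m,\ell)}$. If one only knew $a_m\ne0$ for a single $m$ with $\gcd(m,n)=1$ one could still argue similarly using $a_n z^n$ as well, but the stated hypothesis makes the cleanest argument available.
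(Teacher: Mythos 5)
Your proposal is correct, and it reaches the conclusion by a more elementary and self-contained route than the paper. The paper first applies Rouch\'e's theorem to the reciprocal polynomial $a_0z^n+a_1z^{n-1}+\cdots+a_{n-1}z-a_n$ to show that its unique positive zero $\ome$ has smallest modulus, then invokes its Lemma 28 (adding $f(\ome e^{i\theta})=0$ to its conjugate and subtracting $f(\ome)=0$ to get $\sum_k a_k\ome^k(1-\cos k\theta)=0$, forcing $\cos(m\theta)=\cos(\ell\theta)=1$ and a contradiction with $\gcd(m,\ell)=1$), and finally passes back through the reciprocal to get dominance of $f$. You instead work with $f$ itself: your Step 1 replaces Rouch\'e by the elementary observation that $f(\abs z)\le 0$ for every root $z$ together with the sign analysis of $f$ on $(0,\infty)$, and your Step 2 replaces the cosine identity by the equality case of the triangle inequality, with B\'ezout playing the role of the paper's arithmetic contradiction $m'\ell=m\ell'$; these two uniqueness arguments are equivalent in substance (both exploit strict positivity of $a_0$, $a_m$, $a_\ell$ at coprime indices), but yours needs no complex-analytic input and no reciprocal-polynomial bookkeeping. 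What the paper's route buys is economy within its own context: the Rouch\'e argument and Lemma 28 are already set up for the proof of Theorem 13, Part 2, so the proposition there falls out as a two-line corollary of machinery in place, whereas your argument is the cleaner standalone proof.
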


By a classical theorem of
\cite{brauer},
if $a_{n-1}\ge \cdots \ge a_0$,
then the unique positive zero of the polynomial $f(z)$ in Proposition
\ref{prop:dominant}
is a Pisot number, and hence, it is a dominant polynomial.
As proved in \cite{dubickas},
most polynomials are dominant, but
proving that a certain class of polynomials are dominant in general
may require some work.
Various constructions and tests for dominant polynomials are introduced in
\cite{dubickas}.

\section{Examples of non-standard \zec\ conditions}\label{sec:non-standard}

Recall that by Theorem \ref{thm:integers}, given an increasing sequence
in $\nat$, there is a \zec\ condition under which the sequence is
a fundamental sequence for $\nat$, and it can be constructed using a greedy algorithm.
In this section, we introduce some non-standard examples for which
the \zec\ conditions are more concrete than the one abstractly given by a greedy algorithm.

\subsection{Fixed blocks}

Recall the \zec\ collection defined in Example \ref{exm:zero-coeff}.
The \immpred s  are given by $\hbeta^n=
\sum_{k=1}^{n-1} \beta^k=(1,1,\dots,1,1,\bar 0)$ for $n\not\equiv 0\mod 3$,
and $\hbeta^{3n}=\sum_{k=1}^{3(n-1)} \beta^k
+\beta^{3n-1}=(\bar 1,0,1,\bar 0)$.
The unique increasing fundamental sequence
$Q$ is given by $Q_n := Q_{n-1}+\cdots + Q_2 + 2Q_1$ for $n\not\equiv 0\mod 3$ where $(Q_1,Q_2)=(1,2)$, $Q_{3n} = Q_{3n-1}
+ Q_{3n-3}+\cdots +Q_2 + 2Q_1 $ for all $n\ge 2$, and $Q_3=3$.
Let us show that these recursions reduce to $Q_n = 7 Q_{n-3}$ for $n\ge 4$
with $(Q_1,Q_2,Q_3)=(1,2,3)$.

Notice that
the values of $\cE$-expansions supported on the indices
$\set{3n-2,3n-1,3n}$ are obtained by adding $Q_{3n-2}$ each time as follows:
\begin{gather}
0<Q_{3n-2}<Q_{3n-1}< Q_{3n}
<Q_{3n-2} +Q_{3n}<Q_{3n-1} +Q_{3n}
<Q_{3n-2} +Q_{3n-1} +Q_{3n}< Q_{3n+1}.
	 \label{eq:listing}
\end{gather}
Hence, $7Q_{3n-2} = Q_{3n+1}$.
Also notice that as we keep adding $Q_{3n-2}$,
we arrive $Q_{3n-1}$ and $Q_{3n}$, and
we have $Q_{3n-1} =2 Q_{3n-2}$ and $Q_{3n}=3 Q_{3n-2}$.
Thus,
\begin{align*}
Q_{3n-1} =2 Q_{3n-2},\ n\ge 1 &\implies
Q_{3n+2}=2 Q_{3n+1}=7(2Q_{3n-2})=7 Q_{3n-1},\\
Q_{3n}=3 Q_{3n-2},\ n\ge 1 &\implies Q_{3n+3}
=3 Q_{3n+1} =7(3Q_{3n-2})= 7 Q_{3n}.
\end{align*}
Let us introduce a different perspective,
from which it is far easier to see the simple formula of $Q_n$.
First notice that $(1,2,3)$ is the only increasing sequence of three positive integers
that uniquely represents the first consecutive positive integers $\set{1,2,\dots,M}$
under the \Ez\ condition where $M$ turns
out to be $6$.
Given any positive integers $n$, let $n=\sum_{r=0}^\infty a_r 7^r$ be
the base-$7$ expansion where $0\le a_r \le 6$, and
write $n=\sum_{r=0}^\infty (\mu_{r1} +2 \mu_{r2} + 3\mu_{r3})7^r$
where $(\mu_{r1}, \mu_{r2}, \mu_{r3})$ is the unique one of the seven blocks such that $a_r =\mu_{r1} +2 \mu_{r2} + 3\mu_{r3}$. Thus, they make an $\cE$-expansion.

In general given a positive integer $N\ge 1$,
we may consider a random list of
\immpred s $\hbeta^k$ for $2\le k\le N+1$.
They determine an increasing finite sequence
$(Q_1,\dots,Q_N)$ where $Q_1=1$ and $Q_n = Q_1+\sum\hbeta^n Q$
for $2\le n\le N$, and let $B$ be the list of blocks of length $N$
that were used in representing each integer from $1$ to $\sum \hbeta^{N+1} Q$.
Then, the collection $\cE$ generated by concatenating blocks in $B$
is \zec.
Consider the sequence $Q$ given by $Q_n = b Q_{n-N}$ where $b=Q_{N+1}$ and the initial values $(Q_1,\dots,Q_N)$ that were the ones determined earlier.
Then, as argued in the earlier example where $b=7$ and $N=3$,
it is easy to see via the base $b$-expansions that
$Q$ is an increasing fundamental sequence for $\nat$.
By Theorem \ref{thm:integers},
under the \Ez\ condition,
$\nat$ has a unique increasing
fundamental sequence, and hence,
the system of equalities $Q_{n+1}=Q_1 + \sum\hbeta^n Q$
must imply the short recursion $Q_n = b Q_{n-N}$ for all $n\ge N+1$.
However, we can also use the method of
listing expressions using
all blocks as in (\ref{eq:listing}), i.e.,
\begin{gather}
Q_{nN+1} <\cdots
<\sum_{k=nN+1}^{(n+1)N} \hbeta^{N +1}_{k-nN} Q_k < Q_{(n+1)N +1},\label{eq:listing-1}\\
\intertext{
and as we list each block,
we obtain}
Q_{nN+k} = c_k Q_{nN+1}\quad \text{for $1\le k\le N$}
\label{eq:listing-2}
\end{gather}
where $c_k$ is independent of $n$.
This proves that $Q_{n+N} = b Q_n$ for all $n\ge 1$
where $b=Q_{N+1}$.

Let us consider \zec\ collections constructed with these fixed blocks for
the unit interval $\OI$.
Let $B$ be the descendingly-ordered list of  blocks
$\set{(a_1,a_2,a_3)\in\zz^3 : 0\le a_k \le 1,\ k=1,2,3}- \set{(0,1,1)}$,
and let us declare the maximal \cf s of order $n$ by concatenating these seven blocks.
For $n\ge 0$,
define
\begin{gather}
\begin{aligned}
\bbeta^{3n+1} &:=\beta^{3n+1} + \beta^{3n+2} + \beta^{3n+3}
+\sum_{k=3n+4}^\infty \beta^k\\
\bbeta^{3n+2} &:=\phantom{\beta^{3n+1} +} \beta^{3n+2} \phantom{ + \beta^{3n+3} }
+\sum_{k=3n+4}^\infty \beta^k\\
\bbeta^{3n+3} &:= \phantom{\beta^{3n+1} + \beta^{3n+2} +} \beta^{3n+3}
+\sum_{k=3n+4}^\infty \beta^k,
\end{aligned}
\label{eq:block-eq}
\end{gather}
and let
$\cE$ be the \zec\ collection determined by $\bbeta^n$ for $n\ge 1$.
If exists, by Theorem \ref{thm:real}, a decreasing fundamental sequence $Q$ must satisfy the following
for $n\ge 0$
since $Q_{3n+3}=\sum_{k=4}^\infty Q_{3n+k}$:
\begin{equation}\label{eq:system}
\begin{aligned}
Q_{3n}&
	 =Q_{3n+1}+Q_{3n+2}+2Q_{3n+3}\\
Q_{3n +1}&
	 =\phantom{Q_{3n+1}+}Q_{3n+2}+ Q_{3n+3}\\
Q_{3n+2 }&
	 = \phantom{Q_{3n+1}+Q_{3n+2}+} 2Q_{3n+3}.
	 \end{aligned}
	 \end{equation}
	 The situation is similar to the case of positive integers.
	 Recall the subcollection $\cE^{3n+3}$, which is finite, from Definition \ref{def:immsucc-R}.
The following seven values are obtained from using \cf s in $\cE^{3n+3}$, and they are equal to consecutive integer multiples of
$Q_{3n+3}$:
$$
0<
Q_{3n+3} < Q_{3n+2} <Q_{3n+1}
<\cdots <Q_{3n+3} + Q_{3n+2} + Q_{3n+1} < Q_{3n},$$
and $Q_{3n+2}=2Q_{3n+3}$ and $Q_{3n+1} = 3 Q_{3n+3}$ for all $n\ge 1$. Hence, $Q_n = 7 Q_{n+3}$ for all $n\ge 1$.
If we use the same idea of decomposing coefficients of the base $1/7$-expansion, it is easy to see that
$(Q_1,Q_2,Q_3)=(3/7,2/7,1/7)$ and $Q_n = 7 Q_{n+3}$ for all $n\ge 1$ define a decreasing fundamental sequence for $\OI$ under the \Ez\ condition.

Let us consider the problem of the weak converse under the \Ez\ condition,
i.e., the problem of determining whether this is the only decreasing fundamental sequence.
Since the characteristic polynomial for the recursion is
$7x^3-1$, and its zeros have the moduli equal to each other,
\cite[Theorem 1]{daykin3} does not allow us to conclude that
a decreasing fundamental sequence is given by
$ a /\sqrt[3]{7}^n$ for $n\ge 1$.
In fact,
there are complex numbers $\al_k$ for $k=1,2,3$ such that the terms of the above fundamental sequence $Q$ are given by
$Q_n = \al_1 \frac1{\sqrt[3]{7^n}} + \al_2 \frac{e^{2\pi in/3}}{\sqrt[3]{7^n}}+ \al_3 \frac{e^{2\pi in/3}}{\sqrt[3]{7^n}}$
for all $n\ge 0$ where
$\al_1 \approx .96$, $\al_2\approx .02 + .07 i$, and $\al_3\approx .02 - .07 i$.
It is possible to use Binet's formula and consider other possibilities
for $\al_k$ to prove that $Q$ is the only decreasing fundamental sequence, but also we can specialize the system (\ref{eq:system}) with $n=0$ to obtain
$(Q_1,Q_2,Q_3)=(3/7,2/7,1/7)$.
This proves that $Q$ is the only decreasing fundamental sequence under
the \Ez\ condition, i.e.,
the weak converse holds for the $\cE$-interval $\OI$, but
as in the integer case Example \ref{exm:converse-fail}, the full converse fails since
the initial values $(Q_1,Q_2,Q_3)=(3/7,1/7,2/7)$ makes a fundamental sequence as well.

The setup (\ref{eq:block-eq}) can be generalized as follows.
Let $N$ be a positive integer, and given integer $k=1,2,\dots,N$,
let
$a_k$ be a positive integers and $b_{kj}$ be non-negative integers
for $j=k+1,\dots,N$.
Let $\cE$ be the \zec\ collection   for $\OI$ determined by
maximal \cf s $\bbeta^m$ of order $m$ such that
$$\bbeta^{Nn+k}=a_k \beta^{Nn+k} + \sum_{j=k+1}^{N} b_{kj} \beta^{Nn+j}
+\bbeta^{N(n+1)+1}$$
for $k=1,\dots,N$ and $n\ge 0$.
Then, we have a corresponding system of linear equations for $Q_1,\dots,Q_N$
we obtain as in
(\ref{eq:system}) with $n=0$.
Its row reduction becomes a nonhomogeneuous system since $Q_0=1$, and the row-reduced system has no shifts in leading positions, i.e., it is nonsingular.
Thus, it is clear that, if exists, the first $N$ values of a decreasing fundamental sequence $Q$ are uniquely determined, and
it must be given by $Q_n = b Q_{n+N}$ according
to the principle introduced in (\ref{eq:listing-1}) and
(\ref{eq:listing-2}).
By Theorem \ref{thm:real}, it is indeed a decreasing fundamental sequence for the $\cE$-interval $\OI$, and hence,
the weak converse holds.

\subsection{Linear recurrence with negative coefficients}
\label{sec:neg-coeff}

Recall Example \ref{exm:neg-coeff}.
The recurrence was obtained by considering
$\hbeta^n = \sum_{k=1}^{n-2} \beta^k + 2\beta^{n-1}$
where $
\sum_{k=1}^{n-2} \beta^k$ allows $Q_n $ and $Q_{n-1} $ to have a common tail, i.e.,
$Q_n = 2Q_{n-1} + \sum_{k=1}^{n-2} Q_k$
and $Q_{n-1}= 2Q_{n-2} + \sum_{k=1}^{n-3} Q_k$
have a common tail $\sum_{k=1}^{n-3} Q_k$, and
this allows us to derive a short recursion
$Q_n = 3Q_{n-1} - Q_{n-2}$.
We have
the following general result.
\begin{prop}\label{prop:neg-coeff}
Let $N$ and $b$ be non-negative integers such that $b>0$ if $N=0$,
and let $e_k$ for $1\le k\le N$ be non-negative integers such that $e_1\ge 1$ if $N>0$.
Let $\cE$ be a \zec\ collection for positive integers
determined by \immpred s $\hbeta^n=
	 \sum_{k=1}^{n-1} e_k \beta^{n-k}$ for $2\le n\le N+1$, and
for $n\ge N+2$,
$\hbeta^n=\sum_{k=1}^{n-N-1} b \beta^k
	+ \sum_{k=1}^N e_k \beta^{n-k}$ (where coefficients $e_k$ are independent of $n$).
	Let $Q$ be
	the increasing fundamental sequence for the $\cE$-set of numbers $\nat$.
Then, for $n\ge N+2$,
\begin{equation}
Q_n =(e_1+1) Q_{n-1}
+ \sum_{k=2}^N (e_k-e_{k-1})Q_{n-k}
+ (b-e_N) Q_{n-N-1}. \label{eq:neg-coeff}
\end{equation}

\end{prop}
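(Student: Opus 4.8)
The plan is to turn the \immpred s into a closed formula for $Q_n$ and then difference consecutive terms so as to kill the long tail $b\sum_{k<n-N}Q_k$. Since $\cE$ is \zec\ and $Q$ is the increasing fundamental sequence for the $\cE$-set of numbers $\nat$, Theorem \ref{thm:integers}, Part 2 (b) gives $Q_1=1$ and $Q_m = 1+\sum\hbeta^m Q$ for every $m\ge 2$. Substituting the hypothesized shape of $\hbeta^m$, I obtain, for all $m\ge N+1$,
$$Q_m = 1 + b\sum_{k=1}^{m-N-1}Q_k + \sum_{k=1}^N e_k Q_{m-k},$$
where for $m=N+1$ the first sum is empty, so the identity reduces to $Q_{N+1}=1+\sum_{k=1}^N e_kQ_{N+1-k}$, which is exactly the instance coming from the range $2\le n\le N+1$ of the hypothesis. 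Hence a single identity covers all $m\ge N+1$.

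Next I fix $n\ge N+2$ and apply this identity at $m=n$ and at $m=n-1$, the latter being legitimate since $n-1\ge N+1$, and subtract. The additive constant cancels, the two tails differ by the single term $bQ_{n-N-1}$, and in the leftover $\sum_{k=1}^N e_kQ_{n-k}-\sum_{k=1}^N e_kQ_{n-1-k}$ I re-index the second sum via $k\mapsto k-1$, writing it as $\sum_{k=2}^{N+1}e_{k-1}Q_{n-k}$. Collecting the coefficient of each $Q_{n-k}$ gives
$$Q_n-Q_{n-1}=bQ_{n-N-1}+e_1Q_{n-1}+\sum_{k=2}^N(e_k-e_{k-1})Q_{n-k}-e_NQ_{n-N-1},$$
and transposing $Q_{n-1}$ to the right-hand side produces exactly (\ref{eq:neg-coeff}).

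I expect this computation to be entirely routine; the only point demanding attention is the boundary value $n=N+2$, where $Q_{n-1}=Q_{N+1}$ is governed by the short \immpred\ rather than the long one. As noted above, adopting the empty-sum convention makes the governing identity uniform for all $m\ge N+1$, so this case needs no separate argument; alternatively one checks it directly and finds the same telescoping. In the degenerate case $N=0$ (reading $e_0=e_1=0$) the identity (\ref{eq:neg-coeff}) collapses to $Q_n=(b+1)Q_{n-1}$, matching $\hbeta^n=b\sum_{k=1}^{n-1}\beta^k$, and the hypothesis $b>0$ keeps it nontrivial. Feeding the initial values $Q_m=1+\sum\hbeta^m Q$ for $2\le m\le N+1$ into this recursion then yields Theorem \ref{thm:neg-coeff} at once. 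There is no deeper obstacle beyond the index bookkeeping.
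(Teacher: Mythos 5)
Your proof is correct and follows essentially the same route as the paper: write $Q_m = Q_1 + \sum\hbeta^m Q$, substitute the explicit form of the \immpred s, and subtract the identities at $m=n$ and $m=n-1$ so the common tail $b\sum_{k\le n-N-2}Q_k$ and the constant cancel, leaving exactly (\ref{eq:neg-coeff}). Your explicit attention to the boundary case $n=N+2$ via the empty-sum convention is a minor refinement the paper leaves implicit; nothing further is needed.
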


\begin{proof}
If $n\ge N+2$,
\begin{align*}
&
\begin{aligned} Q_n &=Q_1+ \sum_{k=1}^{n-N-1} b \beta^k Q_k
	+ \sum_{k=1}^N e_k Q_{n-k} \\
	& =Q_1+ \sum_{k=1}^{n-N-2} b \beta^k Q_k+ b Q_{n-N-1}
	+ \sum_{k=1}^N e_k Q_{n-k},\end{aligned} \\
	& Q_{n-1}  =Q_1+ \sum_{k=1}^{n-N-2} b \beta^k Q_k
	+ \sum_{k=1}^N e_k Q_{n-k-1} \\
	\implies\quad
& Q_n - Q_{n-1}  =e_1 Q_{n-1}
+ \sum_{k=2}^N (e_k-e_{k-1})Q_{n-k}
+ (b-e_N) Q_{n-N-1}\\
\implies\quad
& Q_n  =(e_1+1) Q_{n-1}
+ \sum_{k=2}^N (e_k-e_{k-1})Q_{n-k}
+ (b-e_N) Q_{n-N-1}.
\end{align*}

\end{proof}
\noindent
The structure of the coefficients in (\ref{eq:neg-coeff})
immediately implies Theorem \ref{thm:neg-coeff}, and we leave the proof to the reader.

\subsection{Linear recurrence with non-constant coefficients}
Let us introduce \immpred s $\hbeta^n$ that are
written in terms of polynomials with
positive coefficients.
It is still based on the same idea of using a common tail in \immpred s as in Section \ref{sec:neg-coeff}.
Let $\tau=\sum_{k=1}^\infty k \beta^k$ be a \cf,
and let $\tau^n$ denote $\resab{[1,n]}(\tau)$ for $n\ge 1$.
Let
$\cE$ be the \zec\ collection for positive integers with 
$\hbeta^n = \tau^{n-2} + n \beta^{n-1}=(1,2,3,\dots,n-2,n,\bar0)$ for all $n\ge 2$.
Let $Q$ be the increasing fundamental sequence for the $\cE$-set of numbers $\nat$, so that it is given by
$Q_n = Q_1 + \sum\hbeta^n Q$.
Then, using the same idea introduced in Section \ref{sec:neg-coeff},
we find
$ Q_n = (n+1) Q_{n-1}- Q_{n-2} $ for all $n\ge 3$.
For the same \cf\ $\tau$,
if we instead consider \immpred s
$\hbeta^n =\tau^{n-3} + 2n \beta^{n-2}+3n \beta^{n-1}$,
then
the recursion turns out to be
$Q_n=(3n+1) Q_{n-1} -(n-3) Q_{n-2} - (n+1) Q_{n-3}$
for all $n\ge 4$.

Recall from Section \ref{sec:results} the example of a fundamental sequence $Q$ given by
$Q_{n+2} = (n+1) Q_{n+1} + Q_n$ for all $n\ge 1$ and
$Q=(1,2,5,17,\dots)$.
This recursion is obtained from
$\hbeta^n = \sum_{k=0}^{\lfloor (n-1)/2\rfloor}(n-1-2k) \beta^{n-1-2k}=(\cdots,0,n-5,0,n-3,0,n-1,\bar 0)$ for all $n\ge 2$, and
if we use the same idea demonstrated in this section
and the expression
$\hbeta^n = (n-1) \beta^{n-1} + \hbeta^{n-2}$ for all $n\ge 4$,
i.e.,
\begin{align*}
Q_{n} &= (n-1) Q_{n-1} +(n-3) Q_{n-3} +\cdots\\
Q_{n-2} &= (n-3) Q_{n-3} +(n-5) Q_{n-5} +\cdots
\end{align*}
then we obtain $Q_{n+2} = (n+1) Q_{n+1} + Q_n$ for all $n\ge 1$.

Conversely, if we have
$$Q_{n+3}= f_1(n+2) Q_{n+2} + f_2(n+1) Q_{n+1} + f_3(n) Q_n$$ for all $n\ge 1$, for example, where $f_k(n)$ for $k=1,2,3$ are non-zero polynomials with non-negative integer coefficients, we may recursively define \immpred s as follows:
$\hbeta^n = f_1(n-1) \beta^{n-1} + f_2(n-2) \beta^{n-2}
+(f_3(n-3) - 1) \beta^{n-3} + \hbeta^{n-3}$ for all $n\ge 5$,
$\hbeta^4 = f_1(3) \beta^3+f_2(2) \beta^2+(f_3(1)-1) \beta^1$,
$\hbeta^3 = f_1(2) \beta^2 + f_2(1) \beta^1$, and $\hbeta^2 =f_1(1) \beta^1$.
For example,
$$
\hbeta^6
=f_1(5)\beta^5 + f_2(4)\beta^4 +( f_3(3)-1) \beta^3
+ f_1(2)\beta^2 + f_2(1)\beta^1.$$

\section{Examples of $\cE$-\zec\ subsets }\label{sec:examples}

Recall from Section \ref{sec:D-R} that $a\nat$ is an $\cL$-subset of $\nat$ if $L=(1,1,1)$ and $a$ is a positive integer.
In fact, $a\nat$ is an $\cE$-subset of $\nat$ for all \zec\ collections $\cE$
since $\nat$ is an $\cE$-set of numbers for all \zec\ collections $\cE$ by Theorem \ref{thm:integers}.
It turns out that other congruence classes are essentially not an $\cE$-subset for any
  \zec\ collection  $\cE$ that contains more than the basis \cf s $\beta^n$ and the zero \cf.
  We shall call such a collection a nontrivial \zec\ collection.
\begin{prop}
Given an integer $a>1$ and an integer $0< j <a$ such that $\gcd(j,a)=1$, the subset $j+a\nat$ is not an $\cE$-subset if $\cE$ is not trivial.
\end{prop}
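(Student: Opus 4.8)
The plan is to argue by contradiction, exploiting the modular rigidity forced on any fundamental sequence for $j+a\nat$. Suppose $j+a\nat=X_Q^{\cE}$ for some sequence $Q$ in $\nat$ with the \EEunique, where $\cE$ is a nontrivial \zec\ collection for positive integers. Since every basis \cf\ $\beta^n$ belongs to $\cE$, we have $Q_n=\sum\beta^n Q\in j+a\nat$, hence $Q_n\equiv j\pmod a$ for all $n\ge 1$. Therefore, for an arbitrary nonzero $\delta=\sum_k\delta_k\beta^k$ in $\cE$, reduction modulo $a$ gives $\sum\delta Q=\sum_k\delta_k Q_k\equiv j\sum_k\delta_k\pmod a$; since $\sum\delta Q$ again lies in $j+a\nat$ and $\gcd(j,a)=1$, this forces $\sum_k\delta_k\equiv 1\pmod a$. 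In short, every nonzero \cf\ of $\cE$ has digit sum congruent to $1$ modulo $a$.

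The second half of the plan is to show that a nontrivial \zec\ collection always contains a \cf\ of digit sum exactly $2$; combined with the previous paragraph this yields $2\equiv 1\pmod a$, i.e.\ $a=1$, contradicting $a>1$. To produce such a \cf\ I would enumerate $\cE=\set{\tau^0=0,\tau^1,\tau^2,\dots}$ in ascending lexicographic order as in Lemma \ref{lem:seq}. One first checks $\tau^1=\beta^1$, since $\beta^1$ is the smallest nonzero \cf\ with finite support under $\aord$. If $\cE$ is nontrivial there is a least index $k\ge 2$ with $\tau^k\ne\beta^k$, and then $\tau^{k-1}=\beta^{k-1}$. Now apply Definition \ref{def:zec-N}, Part 2, to the \immsucc\ $\tau^k$ of $\beta^{k-1}$: either $\tau^k=\beta^1+\beta^{k-1}$, or $\tau^k=\beta^n+\resab{[n,\infty)}(\beta^{k-1})$ for some $n\ge 2$ with $\beta^{k-1}\equiv\hbeta^n\reS[1,n)$. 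Using only that $\ord(\hbeta^n)=n-1$ (so $\hbeta^n\ne0$ and is supported on $[1,n-1]$), one rules out the second alternative in every case: if $n<k$ the congruence forces $\hbeta^n=0$, a contradiction; if $n\ge k$ it forces $\hbeta^n=\beta^{k-1}$, hence $n=k$ and then $\tau^k=\beta^k$, again a contradiction. So $\tau^k=\beta^1+\beta^{k-1}$, which has digit sum $2$ (it equals $2\beta^1$ when $k=2$), as desired.

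I expect the second half to be the main obstacle, since the digit-sum argument must be handed a member of digit sum \emph{exactly} $2$: knowing merely that $\cE$ contains some nonzero non-basis \cf\ $\delta$ only gives $\sum_k\delta_k\ge 2$ together with $\sum_k\delta_k\equiv1\pmod a$, which is not contradictory (e.g.\ $\sum_k\delta_k=a+1$). Pinning down the sharp value $2$ is exactly what the \immsucc\ analysis via Definition \ref{def:zec-N} delivers, and it must be run keeping careful track of the order constraint $\ord(\hbeta^n)=n-1$ at each branch; an alternative route would go through Theorem \ref{thm:block-def-N} and Corollary \ref{cor:generate-zec} (a nontrivial $\cE$ has $\hbeta^n\ne\beta^{n-1}$ for some $n$, and $\beta^1+\beta^{n-1}$ then admits an \Eb\ decomposition in $\cE$), but the direct argument seems cleaner. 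Finally I would record two remarks: the hypothesis $\gcd(j,a)=1$ is essential, as the argument otherwise only yields that digit sums are $\equiv 1$ modulo $a/\gcd(j,a)$; and nontriviality is precisely the needed hypothesis, since for the trivial $\cE$ one has $Q=(1,2,3,\dots)$ and $X_Q=\nat$.
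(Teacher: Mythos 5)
Your proof is correct and follows essentially the same route as the paper: the paper picks a nonzero $\mu\in\cE$ whose \immsucc\ is $\beta^1+\mu$ and reduces $\sum\td\mu Q=Q_1+\sum\mu Q$ modulo $a$ to get $2j\equiv j$, which is exactly your ``digit sum $2$ versus digit sum $\equiv 1\pmod a$'' contradiction in the special case $\mu=\beta^{k-1}$. The only difference is that the paper simply asserts the existence of such a $\mu$ from nontriviality, whereas your \immsucc\ analysis via Definition \ref{def:zec-N} (ruling out the carry alternative using $\ord(\hbeta^n)=n-1$) supplies a full justification of that step.
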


\begin{proof}
Let $\cE$ be a nontrivial \zec\ collection for $\nat$, and suppose
that $j+a\nat$ is an $\cE$-subset of $\nat$, so that it has a fundamental sequence $Q$.
Since $\cE$ is not trivial,
there must be a \cf\ $\mu$ whose \immsucc\ is given by $\td\mu=\beta^1 + \mu$.
Then, $\set{Q_1,\sum \mu Q}\subset j+a \nat$ implies
$\sum \td \mu Q = Q_1 + \sum \mu Q\equiv j+j \mod a$.
Since $\sum \td \mu Q\in j + a\nat$, it follows that $2j \equiv j \mod a$, and
since $\gcd(j,a)=1$, it implies that $2\equiv 1 \mod a$, which is a contradiction. This proves the proposition.

\end{proof}
In fact, if $\cE$ is trivial, then we may define $Q_k:=j + ak$ and $j+a\nat = X_Q$. The only $\cE$-expansions are $Q_k$ since $\cE=\set{\beta^k : k\ge 1}
\cup\set{0}$.
The case of $\gcd(j,a)>1$ is reduced $j+a\nat = d(j_0 + a_0\nat)$
where $d=\gcd(j,a)$.
If $a_0>1$, then the proposition implies that $\cE$ must be trivial,
but if $a_0=1$, then the answer is different.
\begin{prop}
Let $j$ be a positive integer.
Then, $j + \nat$ is an $\cE$-subset of $\nat$ for a nontrivial \zec\ collection $\cE$.
\end{prop}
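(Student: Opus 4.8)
The goal is to exhibit, for each fixed positive integer $j$, a nontrivial \zec\ collection $\cE$ for $\nat$ together with an increasing fundamental sequence $Q$ such that $X_Q = j+\nat$. The plan is to build $Q$ by hand so that its $\cE$-subset is exactly $\{j+1, j+2, j+3, \dots\}$, and then invoke Corollary \ref{cor:generate-zec} to get the \zec\ collection with the prescribed \immpred s for free. The natural choice is to let $Q$ represent $j+\nat$ the same way the sequence $(1,2,3,\dots)$ represents $\nat$ under the trivial-plus-one collection, but shifted: concretely, try $Q_1 = j+1$ and $Q_n = Q_1 + (\text{shifted copies})$ so that consecutive $\cE$-expansions differ by $1$. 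The cleanest implementation is $Q_n := j + 2^{n-1}$ is \emph{not} quite right because the gaps are not all $1$; instead one wants the values $\sum\delta Q$ to run through $j+1, j+2, \dots$ consecutively, which forces the \immpred\ structure. A workable concrete choice: take $\cE$ to be the collection whose \immpred s are $\hbeta^2 = (Q_1 - 1)\beta^1 = j\,\beta^1$ (i.e.\ the trivial-type \cf\ of \eqref{eq:trivial-hbeta}) and more generally $\hbeta^n = (Q_n - Q_{n-1} - 1)\beta^1 + \beta^{n-1}$, with $Q_1 = j+1$ and $Q_n$ determined recursively by $Q_n = Q_1 + \sum\hbeta^n Q$. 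By construction this $\cE$ is nontrivial (it contains $\beta^1 + \mu$ as the \immsucc\ of many $\mu$, e.g.\ $\td{\mathbf 0} = \beta^1$), and by Algorithm \ref{alg:E} / Theorem \ref{thm:integers} Part 1(b) the resulting $Q$ is the fundamental sequence for some $\cE$-set of numbers.

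The key steps, in order, are: (1) set $Q_1 := j+1$, pick any valid sequence of \immpred s $\hbeta^n$ of order $n-1$ making $Q$ increasing (the trivial choice of \eqref{eq:trivial-hbeta} suffices, or simply a shifted version of the $(1,2,3,\dots)$-example), and let $\cE$ be the \zec\ collection determined by them via Corollary \ref{cor:generate-zec}; (2) observe $\cE$ is nontrivial because $Q_1 = j+1 > 2$ forces at least one \immpred\ with nonzero $\beta^1$-coefficient, hence $\cE \supsetneq \{\beta^n\}\cup\{0\}$; (3) verify $X_Q = j+\nat$. For step (3), the inclusion $X_Q \subseteq j+\nat$ follows because the smallest value $Q_1 = j+1$ and, walking through $\cE$ in ascending \lex\ order using Lemma \ref{lem:seq}, each successive $\cE$-expansion value increases by exactly $Q_1 = j+1$ when passing a maximal block and by smaller increments otherwise — so the set of values is an initial segment... — wait, more carefully, I should choose the \immpred s so that consecutive $\cE$-values differ by exactly $1$. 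That is achieved precisely by the shifted analogue of the collection in Example \ref{exm:L-recursion} with $L=(1,1)$-type behavior, or most simply: declare $Q_n$ so that $Q_n = (Q_{n-1}\text{-th value's successor})$, which is exactly what "$Q_n = Q_1 + \sum\hbeta^n Q$" with the $(1,2,3,\dots)$-style \immpred s gives when $Q_1$ is replaced by $j+1$. The value set then runs $j+1, j+2, \dots$ with no gaps and no overshoot, giving $X_Q = j+\nat$.

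The main obstacle is pinning down \emph{which} sequence of \immpred s makes the $\cE$-values land on $j+1, j+2, j+3, \dots$ with no gaps. Increasing $Q_1$ from $1$ to $j+1$ shifts every $\cE$-expansion value by a contribution of $Q_1$ each time a carry occurs, so naively the values become $j+1, 2(j+1), \dots$ rather than consecutive. The fix is to absorb the extra $j$ into the \immpred s: instead of $\hbeta^n$ contributing a clean $\beta^{n-1}$, one arranges the tail so that the jump from the last expansion of "length $< n$" to $\beta^{n-1}$ is exactly $1$ rather than $Q_1$. Equivalently — and this is the slick route — take the sequence $Q' = (1,2,3,\dots)$ representing $\nat$ trivially-plus-one, and set $Q_k := Q'_k + j\cdot[\text{$k=1$}]$... no: the correct move is to let $\cE$ be the collection with \immpred s $\hbeta^n = \beta^{n-1}$ for... actually the honest statement is that one should directly define $Q_k := j+k$ for all $k\ge 1$ when... no, that is the trivial collection. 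So the real content is: for $j+\nat$ one needs the nontrivial collection whose fundamental sequence is, e.g., $Q_1 = j+1$, $Q_2 = j+2$, and $Q_n = Q_1 + \sum_{k=1}^{n-1}\hbeta^n_k Q_k$ with $\hbeta^n$ chosen (via the greedy Algorithm \ref{alg:E} applied to the target sequence $j+1, j+2, 2j+3, \dots$) to make it consistent — and then cite Theorem \ref{thm:integers} Part 2(b) to conclude $X_Q$ is exactly the increasing $\cE$-subset generated, which one identifies with $j+\nat$ by checking the value set is an initial segment of $j+\nat$ exhausting it. I would present this by explicitly writing down $Q_1=j+1$, $Q_n = Q_1 + \sum\hbeta^n Q$ with the trivial \immpred s \eqref{eq:trivial-hbeta}, verifying nontriviality, and then showing $X_Q = j+\nat$ by an induction on the value, exactly paralleling the remark after Example \ref{exm:L-recursion} that "each value is obtained by adding $Q_1$ to $\sum\hat\ep Q$" — which, with $Q_1$ replaced appropriately and the \immpred s tuned, yields consecutive integers starting from $j+1$.
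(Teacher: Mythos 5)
There is a genuine gap, and it is exactly the one you half-noticed but never resolved. Every concrete construction you return to ties $Q$ to the collection by the canonical recursion $Q_n = Q_1 + \sum\hbeta^n Q$ (step (1), whether with the trivial \immpred s of (\ref{eq:trivial-hbeta}) or with \immpred s produced by Algorithm \ref{alg:E}), and then sets $Q_1=j+1$. But for any \zec\ collection $\cE$ and any $Q$ satisfying that recursion, Lemma \ref{lem:add-one} shows that passing from a \cf\ to its \immsucc\ increases the value $\sum\delta Q$ by exactly $Q_1$, so by Lemma \ref{lem:seq} the value set is $X_Q=\set{mQ_1 : m\ge 1}=(j+1)\nat$ --- an arithmetic progression, never $j+\nat$ when $j\ge 1$. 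You observe precisely this (``naively the values become $j+1, 2(j+1),\dots$''), but the proposed repair --- ``absorb the extra $j$ into the \immpred s'' so that some jumps are $1$ instead of $Q_1$ --- is impossible inside that framework: as long as $Q$ obeys $Q_n=Q_1+\sum\hbeta^n Q$ relative to the \immpred s of $\cE$, \emph{all} consecutive jumps equal $Q_1$. Citing Theorem \ref{thm:integers}, Part 2(b) does not help either; it only gives uniqueness of an increasing fundamental sequence for a set already known to be an $\cE$-subset, whereas the whole task is to exhibit $j+\nat$ as an $\cE$-subset in the first place. In the end no definite pair $(\cE,Q)$ with a verified identity $X_Q=j+\nat$ is ever produced; the closing ``check that the value set is an initial segment of $j+\nat$ exhausting it'' is asserted, and is false for the sequences you actually write down.

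The missing idea, which is how the paper proceeds, is to decouple $Q$ from the canonical recursion: a fundamental sequence for a \emph{proper} subset such as $j+\nat$ must not satisfy $Q_n=Q_1+\sum\hbeta^n Q$. The paper fixes a concrete nontrivial collection in advance, namely the one determined (via Corollary \ref{cor:generate-zec}) by $\hbeta^n=\beta^{n-1}$ for $2\le n\le j+1$ and $\hbeta^n=\beta^j+\beta^{n-1}$ for $n\ge j+2$, so that the nonzero members of $\cE$ are exactly the basis \cf s $\beta^k$ and the two-term sums $\beta^k+\beta^n$ with $1\le k\le j<n$; nontriviality is then immediate. It then chooses $Q$ directly (not by the recursion) so that the long division algorithm yields the bijection with $j+\nat$: the initial terms $Q_1,\dots,Q_j$ run through $j+1,\dots,2j$, a complete residue system mod $j$, while the terms $Q_n$ with $n\ge j+1$ account for the multiple-of-$j$ part, so each element of $j+\nat$ is uniquely of the form $Q_k$ or $Q_k+Q_n$. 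Some explicit bookkeeping of this kind --- a stated collection, a stated sequence, and a residue-plus-multiple (or similar) argument that the values biject with $j+\nat$ --- is exactly what your proposal lacks, and the recursion-based route you chose cannot supply it.
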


\begin{proof}
Let $\hbeta^{n}=\beta^{n-1}$ for $2\le n\le j+1$, and
$\hbeta^{n}= \beta^j + \beta^{n-1}$ for $n\ge j+2$.
Let $\cE$ be the \zec\ collection determined by \immpred s $\hbeta^n$ for $n\ge 2$.
Let $Q$ be an increasing sequence given by
$Q_k=j+k$ for $1\le k\le j$, and
$Q_n=j(n-j)$ for $n\ge j+1$.
Then, $(Q_1,\dots,Q_j)$ makes a complete residue system mod $j$, and
the terms $Q_k$ for $k\ge j+1$ represent multiples of $j$.
Hence,
given a number $j+x\in j+\nat$, by the long division algorithm,
there are unique integers $k$ and $q$ such that
$x = k + jq$ and $1\le k\le j$.
If $q=0$, i.e., $1\le x\le j$, then $j+x=Q_k$, and
if $q>0$, i.e., $x> j$, then
$j+x=Q_k+Q_n$ where $n:=q+j\ge j+1$.

First notice that the following are typical sequences of \immsucc s:
\begin{gather*}
\beta^1 \aord \beta^2\aord \cdots \aord \beta^j,\\
\intertext{Given $n\ge j+1$,}
\beta^n \aord
\beta^1 +\beta^n \aord \beta^2 +\beta^n
\aord \cdots \aord \beta^j +\beta^n\aord \beta^{n+1}.
\end{gather*}
Then,
given a number $j+x\in j + \nat$, by the earlier decomposition using the long division algorithm, there is a unique \cf\ $\mu\in \cE$ such that
$j+x = \sum \mu Q$. This proves that $j+\nat \subset X_Q$.
Since all non-zero \cf s $\mu \in \cE$ are either $\beta^k$ or $\beta^k + \beta^n$ for $1\le k\le j$ and $n\ge j+1$,
clearly $\sum \mu Q \in j + \nat$, i.e., $X_Q\subset j + \nat$.

\end{proof}

Let us introduce another method of generating $\cE$-subsets of $\nat$,
and it is inspired by the shifting function introduced in \cite{kimberling}.
Let $Z$ be the sequence given by $Z_k:=Q_{k+1}$ where
$Q$ is a fundamental sequence for the $\cE$-set of numbers $\nat$,
and let $\psi_Q : \nat \to \nat$ be the function defined by
$\psi_Q(\sum \ep Q) = \sum \ep Z$.
The unique representation property of $Q$ under the \Ez\ condition implies that the function is well-defined.
For example, if $L=(1,1)$ is a \zml\ and $Q$ is the \fib\ sequence under the \Lz\ condition,
then $\psi_Q(100)=\psi(F_{10}+F_5+F_3) = F_{11}+F_6+F_4 = 162$, and
in general, $\psi_Q(n) =\phi n + O(1)$ where $\phi$ is the golden ratio.
Clearly the subset $\psi_Q(\nat)$ is an $\cL$-subset, and
$\psi_Q(\nat) = \set{\sum \ep Q : \ep\in\cL\ \&\ \ep_1=0}$, which
has a positive proportion of $\nat$ in the following sense:
$
\frac1x\, \#\set{m \le x : m \in \psi_Q(\nat)}
\to \phi -1 $ as $x\to\infty$.
If $\cE$ is an arbitrary \zec\ collection,
there is no difficulty in defining the shifting function $\psi_Q : \nat \to \nat$ where $Q$ is a fundamental
sequence for the $\cE$-set of numbers $\nat$,
and $\psi_Q(\nat)$ and $\psi_Q(\cdots(\psi_Q(\nat)))$ generate a good deal of interesting $\cE$-subsets of $\nat$.

Another interesting way of constructing $\cE$-subsets are as follows.
Let $\wt\cE$ be a \zec\ collection,
and suppose that it has a proper \zec\ subcollection $\cE$.
If $Q$ is a decreasing fundamental sequence
for the $\wt\cE$-set of numbers $R$, then
we may investigate the subset $X_Q^{\cE}$.
Recall from Section \ref{sec:D-R} the sequence $Q$ given by $Q_k:= 2^{k-1}$ for $k\ge 1$, which is an increasing fundamental sequence under the $\wt\cL$-\zec\ condition determined by a \zml\ $\wt L=(1,2)$, and
the subset $X_Q^{\cL}$ where $\cL$ is the \zec\ collection determined by
$L=(1,1)$.
The $\cL$-subset $X_Q^{\cL}$ contains nearly zero proportion of $\nat$
since $\#\set{ m \in X_Q^{\cL} : m \le 2^n} = F_{n+1}$ which implies
$\#\set{ m \in X_Q^{\cL} : m < x} = O(x^{\log_2(\phi)})$ where $\phi\approx 1.6$ is the golden ratio.

\newcommand{\wLz}{$\wt\cL$-\zec}
Let us consider \zec\ collections $\wt \cL$ and $\cL$
of  \cf s for $\OI$ determined by
\zml s $\wt L = (1,2)$ and $L=(1,1)$, respectively.
Then, the sequence $Q$ given by $Q_k=1/2^k$ for $k\ge 1$ is a decreasing
fundamental sequence for the $\wt\cL$-interval $\OI$.
As in the case of $\nat$, the subset generated by $Q$ under
the $\cL$-\zec\ condition is small in the following sense:

\begin{prop}
Let $Q$ be the sequence given by $Q_k=1/2^k$ for all $k\ge 1$,
and let $L=(1,1)$.
Then, the $\cL$-subset $X_Q$ is Lebesgue measurable, and
it has Lebesgue measure zero.
\end{prop}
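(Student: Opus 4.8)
The plan is to exhibit $X_Q$ as a subset of a classical Cantor-type null set, built by covering it with finite unions of dyadic intervals whose total length tends to $0$; since Lebesgue measure is complete, this simultaneously yields measurability and the value $0$.

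First I would record the combinatorial description of the members of $\cL$. Because $L=(1,1)$, the maximal \cf s of the \zec\ collection $\cL$ for $\OI$ are $\bbeta^n=\sum_{k=0}^\infty\beta^{n+2k}$ for $n\ge 1$, and, as noted after Corollary \ref{cor:generate-zec-real}, unwinding the \Ebb\ decomposition shows that a \cf\ $\mu$ lies in $\cL$ exactly when $\mu_k\in\set{0,1}$ for all $k$ and $\mu_k\mu_{k+1}=0$ for all $k$ (no two adjacent $1$'s); that is, $\cL$ is the classical \zec\ condition with infinitely many $1$'s allowed. The reason is that on its support $[n,i]$ a proper \Ebb\ reads $1,0,1,0,\dots,1,0,0$ (or is the zero block), hence has no adjacent $1$'s and ends in a $0$, and the blocks of a decomposition are concatenated without overlap, so no $11$ arises at a junction. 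Writing $Q_k=2^{-k}$, it follows that every element of $X_Q$ is of the form $\sum_{k\ge 1}\mu_k2^{-k}$ for such a \cf\ $\mu$.

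Next I would set up the covering. For $n\ge 1$ let $S_n\subseteq\set{0,1}^n$ be the set of length-$n$ binary words with no two adjacent $1$'s; a one-line recursion gives $\abs{S_n}=\abs{S_{n-1}}+\abs{S_{n-2}}$ with $\abs{S_1}=2$ and $\abs{S_2}=3$, so $\abs{S_n}=F_{n+1}$. For $s\in S_n$ let $J_s$ be the closed dyadic interval $\bigl[\,\sum_{k=1}^n s_k2^{-k},\ \sum_{k=1}^n s_k2^{-k}+2^{-n}\,\bigr]$, of length $2^{-n}$, and put $A_n:=\bigcup_{s\in S_n}J_s$. If $x=\sum_{k\ge 1}\mu_k2^{-k}\in X_Q$, then $(\mu_1,\dots,\mu_n)\in S_n$ and $\sum_{k=1}^n\mu_k2^{-k}\le x\le\sum_{k=1}^n\mu_k2^{-k}+\sum_{k>n}2^{-k}=\sum_{k=1}^n\mu_k2^{-k}+2^{-n}$, so $x\in A_n$. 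Hence $X_Q\subseteq N:=\bigcap_{n\ge 1}A_n$, while the Lebesgue measure of $A_n$ is at most $\abs{S_n}2^{-n}=F_{n+1}2^{-n}$.

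Finally, since $F_{n+1}=O(\phi^n)$ with $\phi=(1+\sqrt5)/2<2$, we get $F_{n+1}2^{-n}\to 0$, so the outer measure of $N$, and a fortiori of $X_Q$, is $0$; a set of outer measure $0$ is Lebesgue measurable with measure $0$, which is the assertion. There is no real obstacle: the estimate is the classical Fibonacci--Cantor computation, and the only step needing a little care is the first one, namely extracting from the general \Ebb\ formalism that the \cf s of $\cL$ are precisely the ``no two consecutive $1$'s'' binary sequences.
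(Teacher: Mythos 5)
Your argument is correct, but it proves the proposition by a genuinely different route than the paper. You cover $X_Q$ from the outside: since every $\cf$ in $\cL$ has entries in $\set{0,1}$ with no two adjacent $1$'s (a consequence of the shape of the proper \Ebb s for $\bbeta^n=\sum_{k\ge0}\beta^{n+2k}$, exactly as you unwind it), every point of $X_Q$ lies in one of the $F_{n+1}$ dyadic intervals of length $2^{-n}$ indexed by admissible length-$n$ prefixes, so the outer measure of $X_Q$ is at most $F_{n+1}2^{-n}\to 0$, and completeness of Lebesgue measure gives measurability and measure zero in one stroke. The paper instead works from the inside of the complement: to each finite-support $\cL$-expansion $\gamma$ it attaches an interval $J_\gamma$ of binary expansions whose first adjacent pair of $1$'s occurs just past the support of $\gamma$, uses uniqueness of binary expansions to show these intervals are pairwise disjoint, identifies $\OI-X_Q$ with their union (up to endpoints), and evaluates $\sum_{b\ge1}F_{b-1}/2^{b+1}=1$ exactly. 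Your covering argument is shorter and needs neither the disjointness analysis nor the exact Fibonacci--geometric identity, only the crude bound $\phi<2$; the paper's argument buys more structural information, namely an explicit description of the complement as a countable disjoint union of intervals of total length $1$. One small inaccuracy in your write-up: the characterization ``$\mu\in\cL$ exactly when $\mu$ has no two adjacent $1$'s'' overstates the converse direction --- for instance $\bbeta^1=(1,0,1,0,\dots)$ has no adjacent $1$'s but is excluded from $\cL$ (as are all \cf s terminating in a maximal tail, by Definition \ref{def:immsucc-R}, Part 4) --- but your proof only uses the forward inclusion, so the argument is unaffected.
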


\begin{proof}
Let $\wt L=(1,2)$ be a \zml, and let $\wt \cL$ be the \zec\ collection
determined by $\wt L$.
Let $S$ be the subset of $X_Q$ consisting of $\sum\ep Q$ such that
$\ep\in \cL$ is non-zero, and has finite support.
Given a positive
integer $b$,
let $S_b$ be the subset of $S$ consisting of $\ep$
such that $\ep_b=1$ and $\ep_k=0$ for all $k>b$.
Then, $S$ is the disjoint union of $S_b$ where $b$ varies over the
positive integers.

Given $\gamma \in S$, find the index $b$ such that
$\gamma\in S_b$.
Let $\gamma_0:=\gamma+1/2^{b+1}$, and
\begin{equation*}
J_\gamma:=(\gamma_0,\gamma_0+1/2^{b+1})
=\set{ \gamma_0 + \sum \delta Q \in \OI : \delta\in \wt\cL,\ %
b+1<\ord(\delta)<\infty}.
\end{equation*}
Then,
the binary expansions of a real number $x$ in the intervals $J_\gamma$
are in the form of $\sum_{k=1}^{b-1} \ep_k 2^{-k} + 2^{-b}+2^{-b-1}
+\sum_{k=b+2}^\infty \delta_k 2^{-k}$ where $\ep\in\cL$ and
$\delta\in\wt\cL$, which are non-zero.
Since $\ep\in\cL$, the sum $2^{-b}+2^{-b-1}$ is the first adjacent terms
in the expansion of $x$.
Conversely, if the binary expansion of a number in $J_\gamma$ is given,
we can identify $\gamma$ by finding the first two adjacent terms.
If $\gamma$ and $\gamma'$ are two numbers in $S$,
and $J_\gamma \cap J_{\gamma'}\ne \varnothing$,
then the first two adjacent terms of the (unique) binary expansion of a number in $J_\gamma \cap J_{\gamma'}$ determine $\gamma$, and hence,
$\gamma=\gamma'$.
Thus, $\bigcup_{\gamma\in S} J_\gamma$ forms a disjoint (countable) union of open intervals.
For all $\gamma\in S$, the left endpoint $\gamma_0$ of the interval $J_\gamma$ is not a member of $X_Q$, but the right endpoint may or may not be a member of $X_Q$.
Let $J_\gamma^0$ denote $[\gamma_0,\gamma_0+1/2^{b+1}]$
if the endpoint is not in $X_Q$, and $[\gamma_0,\gamma_0+1/2^{b+1})$
otherwise. Then since $S$ is countable,
the union $\bigcup_{\gamma\in S} J_\gamma^0$ is Lebesgue measurable,
and $\OI - X_Q = \bigcup_{\gamma\in S} J_\gamma^0$.
It is a straightforward induction exercise to show that given an integer $b\ge 1$, the \fib\ term
$F_{b-1}$ where $F_0=1$ is the number of \cf s in $S_b$.
\newcommand{\mes}{\mathrm{msr}}%
Let
$\mes$ denote the Lebesgue measure.
Then, it follows
\begin{align*}
\mes(\OI-X_Q)=\mes\left(\bigcup_{\gamma\in S} J_\gamma^0 \right)
&=\mes\left(\bigcup_{\gamma\in S} J_\gamma \right)
=\sum_{b=1}^\infty\ \sum_{\gamma\in S_b} \mes\left(J_\gamma \right) =
\sum_{b=1}^\infty \frac{F_{b-1}}{2^{b+1}} = 1.
\end{align*}
Therefore, $X_Q$ is Lebesgue measurable, and $\mes(X_Q)=0$.

\end{proof}

\iffalse
\begin{conjecture}
Given a finite list of positive integers $\cL$,
each Lebesgue measurable \clz\ subset of $\OI$ that is not an open interval has measure zero.

\end{conjecture}

Example: If $Q_k=1/2^k$ and $\cL=(1,1)$, then it seems that $X_Q$ has measure zero.
All the intervals $J_n^1:=[1/2^n + 1/2^{n+1},1/2^{n-1})$ for $n\ge 2$ are gone.
All the intervals $J_n^2:=[1/2^n + 1/2^{n+2}+1/2^{n+3},1/2^n + 1/2^{n+1})$ for $n\ge 2$ are gone.
\fi

\section{Proofs}\label{sec:proofs}
We prove the results that are introduced in Section \ref{sec:D-R}.   
The following lemma is fundamental
to the connection between the two descriptions of \zec\ collections 
introduced in Theorem \ref{thm:block-def-N}.

\begin{lemma}\label{lem:decomposition}
Let $\cE$ be a \zec\ collection   for positive integers,
and let $\delta = \al + \rho\in\cE$ where $\al$ and $\rho$ are \cf s
such that $\al\ne 0$ and $\rho \equiv 0 \reS [1,r)$ where $r:=\ord(\al) $.
Then,
there is an index 	$t$ such that $t>r$,
$\resab{[r,t)}(\rho)$ is a proper \Eb\ at index $t-1$,
and one of the successors of $\delta$ in the \lex\ order
is $\beta^{t} + \resab{[t,\infty)}(\rho)$.
\end{lemma}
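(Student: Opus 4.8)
The plan is to iterate the immediate-successor map of Definition \ref{def:zec-N}, Part 2 starting from $\delta$, tracking how it acts relative to the index $r := \ord(\al)$. Since $\al_k = 0$ for $k > r$ we have $\delta_k = \rho_k$ for all $k > r$; since $\al \ne 0$ and the values of a \cf\ are non-negative we have $\al_r \ge 1$, so $\delta_r = \al_r + \rho_r \ge 1 + \rho_r$ and in particular $\ord(\delta) \ge r$. Set $\sigma^0 := \delta$, and for $j \ge 0$ let $\sigma^{j+1}$ be the immediate successor of $\sigma^j$ in $\cE$ (which exists, since $\cE$ is \zec); then $\sigma^0 \aord \sigma^1 \aord \cdots$. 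Call a step $\sigma^j \mapsto \sigma^{j+1}$ an \emph{overflow} if it has the form $\sigma^{j+1} = \beta^n + \resab{[n,\infty)}(\sigma^j)$ with $n > r$ (recall that the only possibilities for $\sigma^{j+1}$ are $\beta^1 + \sigma^j$ and $\beta^n + \resab{[n,\infty)}(\sigma^j)$ with $n \ge 2$).

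First I would note that before the first overflow every step alters only coordinates in $[1,r]$: a step $\sigma^{j+1} = \beta^1 + \sigma^j$ changes only coordinate $1$, and a step $\sigma^{j+1} = \beta^n + \resab{[n,\infty)}(\sigma^j)$ with $n \le r$ changes only coordinates $1,\dots,n$. Hence, writing $J$ for the last index before the first overflow, for every $j \le J$ we have $\sigma^j_k = \delta_k$ for all $k > r$, and $\sigma^j_r \ge \delta_r$ (the $r$-th coordinate is never decreased by such a step). Next I would show that an overflow must occur: otherwise every $\sigma^j$ would be supported in $[1,\ord(\delta)]$, so $\sigma^j \aord \beta^{\ord(\delta)+1}$ for all $j$; but $\beta^{\ord(\delta)+1} \in \cE$ and the $\sigma^j$ are pairwise distinct, contradicting Definition \ref{def:zec-N}, Part 1. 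Letting $t := n > r$ be the index to which the first overflow carries, we thus have $\sigma^J \equiv \hbeta^t \reS [1,t)$ and $\sigma^{J+1} = \beta^t + \resab{[t,\infty)}(\sigma^J)$.

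It remains to read off the two assertions. Because coordinates above $r$ are untouched through step $J$, we have $\sigma^J_k = \delta_k = \rho_k$ for all $k > r$; since $t > r$ this gives $\resab{[t,\infty)}(\sigma^J) = \resab{[t,\infty)}(\rho)$, whence $\sigma^{J+1} = \beta^t + \resab{[t,\infty)}(\rho)$, which is a successor of $\delta$ in the \lex\ order, being obtained from $\delta$ by $J+1$ applications of the immediate-successor map. For the block assertion I would verify Definition \ref{def:E-block-N} with $i := r$: the \cf\ $\resab{[r,t)}(\rho)$ vanishes outside $[r,t-1]$; on $(r,t-1]$ it satisfies $\rho_k = \sigma^J_k = \hbeta^t_k$ (the first equality because $k > r$, the second from $\sigma^J \equiv \hbeta^t \reS [1,t)$), so it agrees with $\hbeta^t$ there; and at index $r$ one has $0 \le \rho_r < \hbeta^t_r$, since $\hbeta^t_r = \sigma^J_r \ge \delta_r \ge 1 + \rho_r$. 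Hence $\resab{[r,t)}(\rho)$ is a proper \Eb\ at index $t-1$, which completes the argument.

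The step I expect to be the main obstacle is the finiteness argument forcing an overflow, together with the bookkeeping that no step disturbs coordinates above $r$ before that overflow; once these are in hand, the rest is just unwinding the definitions. The case $r = 1$ needs no separate handling: there the ``lower part'' is the single coordinate at index $1$, a $\beta^1$-step changes only that coordinate, and the inequality $\al_1 \ge 1$ is precisely what gets used.
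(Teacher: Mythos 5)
Your proposal is correct and follows essentially the same route as the paper: iterate the immediate-successor map from $\delta$, use Definition \ref{def:zec-N}, Part 1 to force a step that carries past index $r$, and then read off that $\resab{[r,t)}(\rho)$ is a proper \Eb\ at index $t-1$ from the condition $\sigma^J \equiv \hbeta^t \reS [1,t)$ together with $\hbeta^t_r \ge \delta_r > \rho_r$. The only cosmetic difference is bookkeeping (you take the first ``overflow'' step and track that coordinate $r$ never decreases, while the paper takes the last successor agreeing with $\delta$ on $(r,\infty)$ and compares at index $r$ via the lex order), so the arguments are essentially identical.
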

\begin{proof}
Let $\delta^1:=\delta$, and let $\delta^k$ be the immediate successor of $\delta^{k-1}$ for $k=2,\dots$, which are obtained by adding $\beta^1$ or replacing a copy of
$\hbeta^u$ for some index $u$.
Let $S$ be the set of indices $k$ such that
$\delta^k \equiv \delta \reS (r,\infty)$.
Since $1\in S$, the set $S$ is non-empty, and
by Definition \ref{def:zec-N}, it is a finite set.
We denote by $K$ the largest index in $S$.
By the choice of $K$, we must have 
$\delta^{K+1}\ne \delta^K + \beta^1$,
because otherwise we would have $K+1\in S$.
Then, by Part 2 of Definition \ref{def:zec-N}, there is an index $t\ge 2$ such that
$\delta^K \equiv \hbeta^t \reS [1,t)$ and
$\delta^{K+1} = \beta^t + \resab{[t,\infty)}(\delta^K)$.
Let us claim that $t>r$.
If $t\le r$, then $\delta^{K+1}\equiv \delta^K \equiv
\delta \reS (r,\infty)$, which implies $K+1\in S$.

Write $\delta^K = \al^K + \rho$.
Since $\delta^K\equiv\delta\equiv \rho \reS (r,\infty)$, we have
$\al^K \equiv 0 \reS (r,\infty)$.
Notice that $\hbeta^t \equiv \al^K + \rho \reS [1,t)
\implies \hbeta^t \equiv \al^K + \rho \reS [r,t)$.
Since $\al^K \equiv 0 \reS (r,\infty)$, it follows that
$\hbeta^t \equiv \al^K_r \beta^r + \rho \reS [r,t)$, and
$\hbeta^t \equiv \al^K + \rho \equiv \rho \reS (r,t)$.
By the definition of the \lex\ order,
$\al^K_r \ge \al_r \ge 1$, and hence,
and $ \hbeta^t_r=\al^K_r + \rho_r>\rho_r$. Thus, $\rho \reS [r,t)$ is a proper \Eb\ at index $t-1$.
In particular, if $t=r+1$ and $\rho_r=0$, then
the proper \Eb\ at index $t-1$ is the zero \cf.
\end{proof}

\begin{cor}\label{cor:decomposition}
Let $\rho$ be the \cf\ defined in Lemma \ref{lem:decomposition}.
Then $\rho$ has a decomposition into proper \Eb s.
\end{cor}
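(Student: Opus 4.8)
The plan is to prove the corollary by iterating Lemma \ref{lem:decomposition}, peeling off one proper \Eb\ from the low-index end of $\rho$ at each step and feeding the remaining tail back into the lemma.

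First I set $t_0:=r$ and $\delta^{(0)}:=\delta$ and apply Lemma \ref{lem:decomposition} to $\delta=\al+\rho$. This returns an index $t_1>t_0$ for which $\resab{[t_0,t_1)}(\rho)$ is a proper \Eb\ at index $t_1-1$, together with the \cf\ $\delta^{(1)}:=\beta^{t_1}+\resab{[t_1,\infty)}(\rho)$, which is an iterated \immsucc\ of $\delta$ and hence a member of $\cE$. The crucial observation is that $\delta^{(1)}$ again satisfies the hypotheses of Lemma \ref{lem:decomposition}, with the splitting $\al^{(1)}:=\beta^{t_1}$ and $\rho^{(1)}:=\resab{[t_1,\infty)}(\rho)$: indeed $\al^{(1)}\ne 0$, $\ord(\al^{(1)})=t_1$, and $\rho^{(1)}\equiv 0\reS[1,t_1)$ as required. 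Applying the lemma to $\delta^{(1)}$ produces $t_2>t_1$ with $\resab{[t_1,t_2)}(\rho)$ a proper \Eb\ at index $t_2-1$ and $\delta^{(2)}:=\beta^{t_2}+\resab{[t_2,\infty)}(\rho)\in\cE$. Iterating in this fashion, and continuing exactly as long as the current tail $\resab{[t_i,\infty)}(\rho)$ is non-zero, I obtain a strictly increasing sequence of indices $r=t_0<t_1<t_2<\cdots$ and, for each such step $i$, a proper \Eb\ $\resab{[t_i,t_{i+1})}(\rho)$ at index $t_{i+1}-1$.

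To close the argument I appeal to the finiteness of support. If $\rho=0$ there is nothing to prove, since the zero \cf\ is itself a proper \Eb. Otherwise, because the $t_i$ strictly increase, after finitely many steps $t_i$ exceeds the largest index in the support of $\rho$; hence the tail $\resab{[t_i,\infty)}(\rho)$ vanishes at some step $m\ge 1$, and the iteration halts there. Splitting $\rho=\resab{[1,\infty)}(\rho)$ along the indices $t_0<t_1<\cdots<t_m$ and using $\resab{[1,t_0)}(\rho)=\resab{[1,r)}(\rho)=0$ (a hypothesis of Lemma \ref{lem:decomposition}) together with $\resab{[t_m,\infty)}(\rho)=0$ gives $\rho=\sum_{i=0}^{m-1}\resab{[t_i,t_{i+1})}(\rho)$, which exhibits $\rho$ as a sum of proper \Eb s obtained by restricting $\rho$ to the consecutive index intervals $[t_i,t_{i+1})$ that tile $[r,t_m)$; this is the desired decomposition.

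The corollary is thus essentially bookkeeping layered on top of Lemma \ref{lem:decomposition}. The two points calling for care are that the hypotheses of the lemma must be checked to reproduce themselves at each stage — which is exactly why the tail $\resab{[t_i,\infty)}(\rho)$ is paired with the single basis \cf\ $\beta^{t_i}$, so that its order is $t_i$ and $\resab{[t_i,\infty)}(\rho)\equiv 0\reS[1,t_i)$ — and that the strict growth of $(t_i)$ against the finite support of $\rho$ forces termination. I do not expect either to be a genuine obstacle; the mildest subtlety is the routine verification that each $\delta^{(i)}$ lies in $\cE$, which holds because $\delta^{(i)}$ is an iterated \immsucc\ of $\delta\in\cE$ and $\cE$ is closed under the \immsucc\ operation by Lemma \ref{lem:seq}.
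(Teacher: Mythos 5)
Your proof is correct and follows essentially the same route as the paper: both iterate Lemma \ref{lem:decomposition} with $\al=\beta^{t_k}$ at each stage to peel off the proper \Eb\ $\resab{[t_{k-1},t_k)}(\rho)$, and both use the finite support of $\rho$ to conclude that the process yields only finitely many non-zero blocks. The only cosmetic difference is that you halt the iteration once the tail vanishes, whereas the paper lets it run on with zero blocks; this is immaterial.
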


\begin{proof}
Let $t_0:=r$.
By Lemma \ref{lem:decomposition},
there is an index $t_1>t_0$ such that $\zeta^1:=\resab{[t_0,t_1)}(\rho)$ is
a proper \Eb\ at index $t_1-1$.
Also since $\beta^{t_1} + \resab{[t_1,\infty)}(\rho)$ is a member of
$\cE$, the lemma with $\al=\beta^{t_1}$ again implies that
there is an index $t_2>t_1$ such that $\zeta^2:=\resab{[t_1,t_2)}(\rho)$ is
a proper \Eb\ at index $t_2-1$ where
$\hbeta^{t_2} + \resab{[t_2,\infty)}(\rho)\in \cE$.
This process continues and generates an infinite sequence
of \cf s $\beta^{t_k} + \resab{[t_k,\infty)}(\rho)$ in $\cE$ where
$\resab{[t_{k-1},t_k)}(\rho)$ is a proper \Eb\ at index $t_k-1$ for $k\ge 1$.
Then, the expression
$\rho=\sum_{k=1}^\infty \resab{[t_{k-1},t_k)}(\rho)$
serves as
a decomposition into proper \Eb s where all 
\Eb s with sufficiently large $k$ are the zero \cf\ (since $\rho$ has finite support).

\end{proof}

The following lemma is also fundamental to the connection
described in Theorem \ref{thm:block-def-N}.

\begin{lemma}\label{lem:E-unique}
Let $E$ be the \aorcoll\ collection of \cf s determined by $\set{\delta^n : n\ge 2}$  defined in Definition
\ref{def:generate-zec}. Then, the following are true:
\begin{enumerate}
\item Each \cf\ $\mu$ in $E$ has a unique $\delta$-block decomposition, and
the largest $\delta$-block in the decomposition is given at index $\ord(\mu)$.
\item
Each \cf\ in $E$ has a unique \immsucc\ in $E$, and it is given by the rule
described in Theorem \ref{thm:block-def-N}, Part 1 (b) where $\hbeta^n =\delta^n$ for each $n\ge 2$.  In particular, for each $n\ge 2$, $\delta^n$ is the \immpred\ of $\beta^n$, and the rule satisfies Definition \ref{def:zec-N}, Part 2.
\item Given $\mu\in E$, there are only finitely many \cf s in $E$ that are less than $\mu$.
\end{enumerate}
In particular, $E$ is a \zec\ collection for positive integers with 
$\hbeta^n =\delta^n$ for each $n\ge 2$.
\end{lemma}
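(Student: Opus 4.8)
\emph{Proof plan.} I would prove the three numbered assertions in the order (1), (3), (2). The concluding sentence then follows immediately: by construction $E$ is an \aorcoll\ collection of \cf s with finite support, it contains the zero \cf, and it contains each basis \cf\ $\beta^i$ (which is a $\delta$-block at index $i$, preceded in its decomposition by zero blocks); assertion (3) is Part 1 of Definition \ref{def:zec-N}, while assertion (2) supplies Part 2 and the identification $\hbeta^n=\delta^n$. For (1), existence of a $\delta$-block decomposition is the definition of $E$, so the content is uniqueness of the one with nonzero last block (equivalently, of the expansion $\sum_{m\ge1}\zeta^m$ with all sufficiently large $\zeta^m$ zero) and the location of that block; I would induct on $\ord(\mu)$. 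For $\mu\ne0$ with $n=\ord(\mu)$, the supports partition an initial interval and all entries are non-negative, so the last nonzero block is a $\delta$-block at index $n$: either it is the maximal block $\delta^{n+1}$, forcing $\mu=\delta^{n+1}$ and $M=1$, or it is a proper $\delta$-block with support $[i,n]$, in which case $\mu_k=\delta^{n+1}_k$ for $i<k\le n$ and $\mu_i<\delta^{n+1}_i$, so $i=\max\{k\le n:\mu_k\ne\delta^{n+1}_k\}$ is forced by $\mu$ and the block equals the restriction of $\mu$ to $[i,n]$. In either case the last block is determined, the remainder lies in $E$ with order $\le i-1<n$, and induction finishes. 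The same analysis gives an auxiliary fact I will reuse: \emph{a \cf\ of $E$ supported on $[1,N]$ is $\aordeq\delta^{N+1}$, with equality only for $\delta^{N+1}$ itself}.

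\emph{Assertion (3) and the well-ordering.} Every entry of a $\delta$-block at an index $\le N$ is at most $C:=\max_{2\le m\le N+1}\max_k\delta^m_k<\infty$ (immediate for the maximal block; for a proper block with support $[i,n]$ one has $\zeta_i\le\delta^{n+1}_i$ and $\zeta_k=\delta^{n+1}_k$ for $i<k\le n$). Since in a decomposition the supports are disjoint, every entry of a \cf\ $\nu\in E$ with $\ord(\nu)\le N$ is $\le C$, so there are only finitely many such $\nu$. As $\gamma\aord\mu$ forces $\ord(\gamma)\le\ord(\mu)$ (compare at the top nonzero index), $\{\gamma\in E:\gamma\aord\mu\}$ is finite, which is Part 1 of Definition \ref{def:zec-N}. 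Consequently the ascending \lex\ order well-orders $E$ with least element $0$ and order type $\omega$ (it is infinite), so $E=\{\tau^0\aord\tau^1\aord\tau^2\aord\cdots\}$ with $\tau^0=0$ and $\tau^{k+1}$ the \immsucc\ of $\tau^k$.

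\emph{Assertion (2).} Let $s(\mu)$ be the \cf\ produced from the $\delta$-block decomposition of $\mu$ by the rule of Theorem \ref{thm:block-def-N}, Part 1(b): $s(\mu)=\beta^1+\mu$ if the first block $\zeta^1$ is not maximal, and $s(\mu)=\beta^{n}+\sum_{m\ge2}\zeta^m$ if $\zeta^1=\delta^{n}$. I would avoid arguing directly that nothing of $E$ lies strictly between $\mu$ and $s(\mu)$, and instead show that $s$ is a bijection $E\to E\setminus\{0\}$ with $\mu\aord s(\mu)$ always; the enumeration $E=\{\tau^k\}$ then finishes, since writing $s(\tau^k)=\tau^{\phi(k)}$ gives a bijection $\phi\colon\{0,1,2,\dots\}\to\{1,2,3,\dots\}$ with $\phi(k)\ge k+1$, and an immediate induction (if $k_0$ were least with $\phi(k_0)\ge k_0+2$, then no index maps to $k_0+1$) forces $\phi(k)=k+1$, so $s(\mu)$ is the \immsucc\ of $\mu$. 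Applying this to $\mu=\delta^n$, whose first block is the maximal block $\delta^n$, gives $s(\delta^n)=\beta^n$, i.e. $\delta^n=\hbeta^n$; and when $s(\mu)\ne\beta^1+\mu$, i.e. $\zeta^1=\delta^{n}=\hbeta^n$, one gets $\mu\equiv\hbeta^n\reS[1,n)$ and $s(\mu)=\beta^n+\resab{[n,\infty)}(\mu)$, matching Part 2 of Definition \ref{def:zec-N}. To prove the claim: $s(\mu)\in E$ because adding $\beta^1$ to $\zeta^1$ (support $[1,n_1]$) leaves it a proper block or turns it into $\delta^{n_1+1}$, while in the second case $\beta^{n}$ is a $\delta$-block at index $n$ and the later blocks are unchanged; $\mu\aord s(\mu)$ because the two \cf s agree above the altered index and the alteration raises the value there, the drop from $\delta^n$ to $0$ in the second case occurring only below index $n$; $s$ is injective because a first-case output has first entry $\ge1$ and a second-case output first entry $0$, and within each case $\mu$ is read off from $s(\mu)$; and $s$ is surjective because, given $\nu\ne0$, if $\nu_1\ge1$ then $\nu-\beta^1\in E$ (removing $\beta^1$ from $\zeta^1$ again yields a proper, hence non-maximal, first block) and $s(\nu-\beta^1)=\nu$, whereas if $\nu_1=0$, putting $n_0=\min\{j:\nu_j\ne0\}\ge2$, inspection of the $\delta$-block decomposition of $\nu$ shows that $\resab{[n_0,\infty)}(\nu)-\beta^{n_0}$ is again a sum of proper $\delta$-blocks with consecutive supports starting at $n_0$, so $p(\nu):=\delta^{n_0}+\bigl(\resab{[n_0,\infty)}(\nu)-\beta^{n_0}\bigr)\in E$ has first block $\delta^{n_0}$ and $s(p(\nu))=\beta^{n_0}+\bigl(\resab{[n_0,\infty)}(\nu)-\beta^{n_0}\bigr)=\nu$.

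The delicate step is the surjectivity of $s$ — concretely, the predecessor construction when $\nu_1=0$, which turns on exactly how the block of $\nu$ meeting index $n_0$ is shaped relative to $n_0$ (it must become, after stripping one $\beta^{n_0}$, a proper $\delta$-block whose support starts at $n_0$). By contrast, the uniqueness of decompositions in (1), the finiteness in (3) with its order-type corollary, and the injectivity and $s(\mu)\in E$ parts are bookkeeping around inserting or deleting a single $\beta^1$ or $\beta^n$; and the usually-annoying point — that $s(\mu)$ is the \emph{immediate} successor — is handled here not by a direct ``nothing in between'' argument but as a formal consequence of $s$ being an order-increasing bijection of $E$ onto $E\setminus\{0\}$.
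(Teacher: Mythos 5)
Your route is sound and genuinely different from the paper's at the crucial point, Part 2. The paper first uses Parts 1 and 3 to show directly that $\delta^n$ is the \immpred\ of $\beta^n$ in $E$, and then proves immediacy of the successor rule by a ``nothing in between'' argument: a hypothetical intermediate $\mu^1$ is peeled apart block by block (via Part 1) into $\tau+\sum_{m\ge2}\zeta^m$ with $\delta^{n_1+1}\aord\tau\aord\beta^{n_1+1}$, contradicting the \immpred\ fact. You instead make the successor rule a map $s$, prove it is an order-increasing bijection of $E$ onto $E\setminus\set{0}$, and let the order type $\omega$ (from Part 3) deliver immediacy; the identity $\hbeta^n=\delta^n$ then falls out of $s(\delta^n)=\beta^n$ rather than anchoring the proof. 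Parts 1 and 3 are handled essentially as in the paper. Your structure buys immediacy ``for free,'' at the cost of having to construct predecessors (surjectivity of $s$), which the paper never needs; your predecessor construction for $\nu_1=0$ is correct.

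One justification, however, is wrong as written: in the case $\zeta^1=\delta^{n_1+1}$ you claim $s(\mu)=\beta^{n}+\sum_{m\ge 2}\zeta^m\in E$ (with $n=n_1+1$) ``because $\beta^n$ is a $\delta$-block at index $n$ and the later blocks are unchanged.'' Since the maximal block $\delta^{n_1+1}$ has support $[1,n_1]$, the support of $\zeta^2$ begins exactly at $i_2=n$, so $\beta^n$ cannot stand as a separate block: it is absorbed into $\zeta^2$, and the block structure does change. In the paper's own classical example, $\mu=(0,1,0,1,0,0,1,\bar 0)$ has $\td\mu=\beta^5+\beta^7$, whose decomposition contains a proper block with support $[3,7]$ --- neither $\beta^5$ nor the old $\zeta^2$. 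The claim $s(\mu)\in E$ is nevertheless true, and the fix is exactly the merge analysis you already perform in the surjectivity step: since $\zeta^2_{i_2}<\delta^{n_2+1}_{i_2}$, either $\zeta^2+\beta^{i_2}$ is again a proper block with support $[i_2,n_2]$; or $\zeta^2_{i_2}+1=\delta^{n_2+1}_{i_2}$, in which case $\resab{[j,n_2]}(s(\mu))$ is a proper block, where $j$ is the largest index $<i_2$ with $\delta^{n_2+1}_j\ne 0$ (pad $[1,j)$ with zero blocks); or no such $j$ exists and $\resab{[1,n_2]}(s(\mu))=\delta^{n_2+1}$ serves as a maximal first block. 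With that repair your argument goes through; it is worth noting that membership of the successor candidate in $E$ is a point the paper's own proof also leaves implicit, so making it explicit is a genuine improvement rather than mere bookkeeping.
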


\begin{proof}
Let us prove Part 3.
If $\mu' \aord \mu$ are two \cf s in $E$, then $\ord(\mu')\le \ord(\mu)$,
and $\mu'$ is written in terms of $\delta$-blocks  of order $\le \ord(\mu)$.
Since there are only finitely many $\delta$-blocks of order $\le \ord(\mu)$,
we prove Part 3.

Let us use the mathematical induction on the order of \cf s to prove Part 1.
The case of $\ord(\ep)=0$ is trivial.
Assume that the \cf s $\ep\in E$ with $\ord(\ep)=s \le M$ for some $M\ge 0$ have a unique $\delta$-block decomposition, and
let $\mu$ be a \cf\ in $E$ with $\ord(\mu) =M+1$.
Let $\mu= \sum_{m=1}^{A}\zeta^m=\sum_{m=1}^{B}\xi^m$ be two $\delta$-block decompositions with supports
$[i_m,n_m]$ and $[j_m, t_m]$, respectively, such that
$i_1=1$, $n_m+1=i_{m+1}$ for $1\le m\le A$, $\zeta^{A}\ne 0$, $j_1=1$, $t_m+1=j_{m+1}$ for $1\le m\le B$, $\zeta^{B}\ne 0$.
Since $\ord(\mu)=\ord(\zeta^A)=\ord(\xi^B)$,
 both $\zeta^{A}$ and $\xi^{B}$ are non-zero
$\delta$-blocks at index $c:=n_{A}=t_{B}$, and for convenience,
let $[a,c]$ and $[b,c]$ denote the supports of $\zeta^{A}$ and $\xi^{B}$, respectively.
If $b<a$, then
$\delta^{c+1}_a=\xi^{B}_a=\zeta^{A}_a <\delta^{c+1}_a$, which is a contradiction, and $a<b$ implies a similar contradiction.
Thus, we conclude $a=b$, and after cancelling $\zeta^{A}$ and $\xi^{B}$, by induction hypothesis, we prove the uniqueness property.
Given a unique $\delta$-block decomposition $\mu= \sum_{m=1}^{A}\zeta^m $
where $\ord(\mu)=\ord(\zeta^A)$, it is clear that
the largest $\delta$-block is clearly given by $\zeta^A$.

Let us prove Part 2.
First let us prove that 
$\delta^n$ is the \immpred\ of $\beta^n$ for each $n\ge 2$.
By Part 3, an \immpred\ of \cf\ greater than $\beta^1$ uniquely exists.
Let $\mu:=\sum_{m=1}^M \zeta^m $ be the $\delta$-block
decomposition of the \immpred\ of $\beta^n$.
Then, $\beta^{n-1} \le_{\text{a}} \zeta^M$, and hence, $\zeta^M$ 
has index $n-1$.
If $\zeta^M$ is the maximal $\delta$-block at index $n-1$, and hence, $M=1$, then $\zeta^M=\delta^n$, which proves the assertion.
If $\zeta^M$ is a proper $\delta$-block at index $n-1$,
then  
there is an index $i\le n-1$ such that $\zeta^M_i < \delta^n_i$ and 
$\zeta^M_k = \delta^n_k$ for $i<k\le n-1$.
However, this implies that $\mu \aord \delta^n$, which contradicts that $\mu$ is 
not the \immpred\ of $\beta^n$.
This concludes the proof of $\delta^n = \hbeta^n$.

Let $\mu:=\sum_{m=1}^M \zeta^m $ be the $\delta$-block decomposition of $\mu\in E$ with supports $[i_m,n_m]$
with $n_m+1 = i_{m+1}$ for $1\le m\le M-1$.
If $\zeta^1$ is a proper $\delta$-block, then clearly $ \beta^1 +\mu$ is the unique \immsucc\ of $\mu$ in $E$.
Suppose that $\zeta^1=\delta^{n_1+1}$, and
there is $\mu^1\in E$ such that
\begin{gather*}
\mu:=\zeta^1 + \sum_{m=2}^M \zeta^m \aord\quad \mu^1\quad \aord \quad
\mu^2:=\beta^{n_1+1} +\sum_{m=2}^M \zeta^m
=\beta^{i_2} +\sum_{m=2}^M \zeta^m.\\
\text{Then,}\quad
\mu_k =\mu^2_k\text{ for all }k> i_2
\implies \mu^1_k = \mu_k =\mu^2_k\text{ for all }k> i_2, \\
\text{and}\quad
\mu_{i_2}=\zeta^2_{i_2} \le\mu^1_{i_2} \le \mu^2_{i_2}= 1+\zeta^2_{i_2}.
\end{gather*}
If $\mu^1_{i_2}= \mu^2_{i_2}$, then
$\mu^1_k < \mu^2_{k}=0$ for some $k<i_2$, which is not possible.
So, it follows that $ \mu_{i_2}=\mu^1_{i_2}=\zeta^2_{i_2}$,
and hence, there is a \cf\ $\tau$ of order $n_1$ that is larger than
$\zeta^1=\delta^{n_1+1}$ such that
$\mu^1 = \tau + \sum_{m=2}^M \zeta^m$.
If $M\ge 2$, by Part 1, $\zeta^M$ is an $\delta$-block appearing in the $\delta$-block decomposition of $\mu^1$, and $\tau+ \sum_{m=2}^{M-1} \zeta^m$ remains
to be a member of $E$. By repeating this process, we find that $\tau$ is a member of $E$, which is true for $M=1$ as well.
However, $\delta^{n_1+1} \aord \tau \aord \beta^{n_1+1}$
contradicts that $\delta^{n_1+1}$ is the \immpred\ of
$\beta^{n_1+1}$.
\iffalse
Let $\theta$ be the largest \Eb\ with
support $[a,n_1]$ appearing in the \Eb\ decomposition of $\delta$.
If $\theta$ is a proper \Eb, $\theta_a<\hbeta^{n_1+1}_a$ and $\theta_k=\hbeta^{n_1+1}_k$
for $a<k\le n_1$, but this implies that
$\delta\aord \zeta^1=\hbeta^{n_1+1}$, i.e.,
$\mu^1 \aord \mu$, which contradicts $\mu \aord \mu^1$.
If $\theta$ is a maximal \Eb, then $\theta=\delta=\hbeta^{n_1+1}$, i.e.,
$\mu^1 =\mu$, which contradicts $\mu \aord \mu^1$.
\fi
Therefore, we conclude that
$\mu^2$ is the \immsucc\ of $\mu$ in $E$.

Let us prove that $E$ is \zec.  Part 3 of this lemma proves the property
described in Definition \ref{def:zec-N}, Part 1.
Part 2 of this lemma proves the property
described in Definition \ref{def:zec-N}, Part 2.

\end{proof}

\begin{prop}\label{prop:E-cE-rule}
Let $\cE$ be the collection defined in Lemma \ref{lem:decomposition} with
\immpred s $\hbeta^n$ for $n\ge 2$, 
and let $E$ be the collection defined in  Lemma \ref{lem:E-unique}
where   $\delta^n = \hbeta^n$ for each $n\ge 2$.
Then, the following are true:
\begin{enumerate}
\item If $\mu\in E\cap \cE$, then
its \immsucc\ in $E$ is equal to its \immsucc\ in $\cE$.
\item
The collection $E$ is a subset of $\cE$.
\end{enumerate}
\end{prop}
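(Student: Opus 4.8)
The plan is to prove the two parts together by one induction along the successor chain of $E$. Since $E$ is a \zec\ collection for positive integers (Lemma \ref{lem:E-unique}), Lemma \ref{lem:seq} lets us write $E=\{\sigma^k:k\ge 0\}$ with $\sigma^0$ the zero \cf\ and $\sigma^{k+1}$ the \immsucc\ of $\sigma^k$ in $E$. I would prove by induction on $k$ that $\sigma^k\in\cE$ and that the \immsucc\ of $\sigma^k$ in $\cE$ is again $\sigma^{k+1}$. Granting this, Part 2 is the statement $E=\{\sigma^k\}\subseteq\cE$, and then $E\cap\cE=E=\{\sigma^k\}$, so Part 1 is exactly the second half of the induction. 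The base case $k=0$ is immediate: by Definition \ref{def:zec-N}, Part 2, the \immsucc\ of the zero \cf\ in $\cE$ is either $\beta^1$ or $\beta^n+\resab{[n,\infty)}(0)$ for some $n\ge 2$ with $0\equiv\hbeta^n\reS[1,n)$, and the latter is impossible since $\ord(\hbeta^n)=n-1$ forces $\hbeta^n_{n-1}\ge1$; as the zero \cf\ is a non-maximal block, its $E$-successor is also $\beta^1$, and $\beta^1\in\cE$.

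For the inductive step, fix $k$ and put $\mu:=\sigma^k$; by the induction hypothesis $\sigma^0\aord\cdots\aord\sigma^k$ are consecutive in the ascending order of $\cE$ starting from the zero \cf, hence are the $k+1$ smallest \cf s of $\cE$, just as they are of $E$. Let $\mu=\sum_{m=1}^M\zeta^m$ be the \dbl\ decomposition of $\mu$ in $E$, with $\zeta^1$ of support $[1,n_1]$ (Lemma \ref{lem:E-unique}, Part 1). By Lemma \ref{lem:E-unique}, Part 2, the \immsucc\ $\nu_E$ of $\mu$ in $E$ is $\beta^1+\mu$ when $\zeta^1$ is proper and $\beta^{n_1+1}+\resab{[n_1+1,\infty)}(\mu)$ when $\zeta^1=\hbeta^{n_1+1}$ is maximal; by Definition \ref{def:zec-N}, Part 2, the \immsucc\ $\nu_\cE$ of $\mu$ in $\cE$ is $\beta^1+\mu$, or else $\beta^n+\resab{[n,\infty)}(\mu)$ for some $n\ge 2$ with $\mu\equiv\hbeta^n\reS[1,n)$. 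Since both $\nu_E$ and $\nu_\cE$ exceed $\mu=\sigma^k$ and $\sigma^0,\dots,\sigma^k$ are the smallest \cf s of both collections, it is enough to establish the two inclusions $\nu_E\in\cE$ and $\nu_\cE\in E$: the first gives $\nu_\cE\aordeq\nu_E$, the second gives $\nu_E\aordeq\nu_\cE$, so $\nu_E=\nu_\cE$, and in particular $\sigma^{k+1}=\nu_E\in\cE$, which closes the induction.

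To see $\nu_E\in\cE$ I would invoke Lemma \ref{lem:decomposition} and Corollary \ref{cor:decomposition}: applied to $\mu=\al+\rho$ with $\al=\zeta^1$ (or the lowest nonzero term of $\mu$ if $\zeta^1=0$) and $r=\ord(\al)$, Lemma \ref{lem:decomposition} shows that suitable iterated \immsucc s of $\mu$ inside $\cE$ have precisely the shape $\beta^t+\resab{[t,\infty)}(\rho)$ demanded by the block structure, while the intervening truncation $\resab{[r,t)}(\rho)$ is a proper \Eb; by the uniqueness of \dbl\ decompositions (Lemma \ref{lem:E-unique}, Part 1) this truncation is forced to be the next block of $\mu$, so $t=n_1+1$ in the maximal case, which puts $\nu_E$ among those $\cE$-successors. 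To see $\nu_\cE\in E$ I would exhibit a \dbl\ decomposition of each candidate directly: adjoining $\beta^1$ to a proper first block $\zeta^1$ either keeps it a proper block at index $n_1$ or, in the boundary case, promotes it to the maximal block $\hbeta^{n_1+1}$; and $\mu\equiv\hbeta^n\reS[1,n)$ means $\mu=\hbeta^n+\resab{[n,\infty)}(\mu)$ begins with the maximal block $\hbeta^n$, so $\beta^n+\resab{[n,\infty)}(\mu)$ arises from $\mu$'s decomposition by a carry and again has a \dbl\ decomposition.

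The main obstacle is the bookkeeping that actually makes the two successor descriptions coincide — in effect, ruling out "false carries". When $\zeta^1$ is proper one must show that no index $n$ simultaneously satisfies $\mu\equiv\hbeta^n\reS[1,n)$ and makes $\beta^n+\resab{[n,\infty)}(\mu)$ the $\cE$-successor in preference to $\beta^1+\mu$ without forcing $\zeta^1=\hbeta^n$, a contradiction; dually, when $\zeta^1=\hbeta^{n_1+1}$ is maximal one must show the $\cE$-successor is exactly the carry at index $n_1+1$, and not $\beta^1+\mu$ or a carry at a different index. I expect this to come down to comparing, at the index $n-1$ (respectively $n_1$), the entry $\hbeta^n_{n-1}\ge1$ forced by $\ord(\hbeta^n)=n-1$ against the entry that the consecutive-support \dbl\ decomposition of $\mu$ assigns there, using also that $\hbeta^n=\delta^n$ is simultaneously the \immpred\ of $\beta^n$ in $\cE$; the finiteness clause Definition \ref{def:zec-N}, Part 1 ensures all the relevant sets of \cf s are finite.
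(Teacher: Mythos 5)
Your overall architecture is fine and is essentially the paper's: the paper also obtains Part 2 by inducting along the successor chain of $E$ (Lemma \ref{lem:seq}), after proving Part 1 for an arbitrary $\mu\in E\cap\cE$; folding the two into one induction is a harmless reorganization, and your reduction of the inductive step to the two inclusions $\nu_E\in\cE$ and $\nu_\cE\in E$ is logically sound. The problem is that neither inclusion is actually established: all of the real content is pushed into what you yourself call ``the main obstacle'' (ruling out false carries), and the mechanism you expect to close it would not work. First, Lemma \ref{lem:decomposition} applied to $\mu=\zeta^1+\rho$ only produces \emph{some} index $t>\ord(\zeta^1)$ for which $\beta^t+\resab{[t,\infty)}(\rho)$ is an iterated successor in $\cE$; uniqueness of \Eb\ decompositions does not force $t=n_1+1$ (it only forces $t-1$ to be a block endpoint of $\rho$'s decomposition), so this does not put $\nu_E$ among the $\cE$-successors. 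Second, a comparison of entries at the single index $n-1$ (or $n_1$) cannot distinguish the true carry from a false one, because the congruence $\mu\equiv\hbeta^n\reS[1,n)$ can hold at several indices simultaneously: for the classical \zec\ collection, $\mu=(1,0,1,0,1,\bar 0)=\hbeta^6$ satisfies it for $n=2,4,6$. Third, the claim that each candidate $\cE$-successor visibly has a \dbl\ decomposition is false for a false carry: in the same example, $\beta^4+\resab{[4,\infty)}(\mu)=(0,0,0,1,1,\bar 0)$ is not a member of $E$, so ``$\nu_\cE\in E$'' cannot be shown uniformly over the two shapes allowed by Definition \ref{def:zec-N}, Part 2, without first identifying which shape $\nu_\cE$ actually takes --- which is precisely the point being deferred.

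What closes the gap (and is the paper's argument) is to exploit the membership $\nu_\cE\in\cE$ through Corollary \ref{cor:decomposition}: whichever form $\nu_\cE$ has, the corollary decomposes the relevant tail into proper \Eb s and thereby produces a second \Eb\ decomposition of $\mu$ itself; the uniqueness statement of Lemma \ref{lem:E-unique}, Part 1 then does all the work. Concretely, if $\zeta^1$ is proper and $\nu_\cE$ were a carry at $n$, then $\mu=\hbeta^n+\resab{[n,\infty)}(\mu)$ would decompose with a maximal first block, contradicting uniqueness; if $\zeta^1=\hbeta^{n_1+1}$ and $\nu_\cE$ were $\beta^1+\mu$, then $\mu$ would decompose entirely into proper blocks, again contradicting uniqueness; and when $\nu_\cE$ is a carry at $t$ in the maximal case, the induced decomposition $\mu=\hbeta^t+\sum_m\theta^m$ forces $t=n_1+1$, so $\nu_\cE=\nu_E$. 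Your proposal names the right lemmas but never runs this argument, so as written it has a genuine gap at the decisive step.
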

\begin{proof}
Since $0\in E \cap \cE$, the intersection is non-empty.
Let $\mu^0$ be a \cf\ in $E\cap \cE$, and let $\mu^1$ be
the \immsucc\ of $\mu^0$ in $E$.
Let us show that the \immsucc\ of $\mu^0 $ in $\cE$ is $\mu^1$.
Let $\mu^0=\sum_{m=1}^M \zeta^m$ be the \Eb\ decomposition.
Suppose that $\zeta^1$ is a proper \Eb.
Then, by Part 2 of Lemma \ref{lem:E-unique},  
we have 
$\mu^1 = \beta^1 + \mu^0$.
By contradiction, assume that the \immsucc\ 
$\mu^0$ in $\cE$ is not $\mu^1$.
 Then, by Part 2 of Definition \ref{def:zec-N}, 
there is $n\ge 2$ such that $\mu^0 \equiv \hbeta^n \reS [1,n)$
and the \immsucc\ of $\mu^0$ in $\cE$ is given by
$\beta^n + \resab{[n,\infty)}(\mu^0)\in \cE$, then, by Corollary \ref{cor:decomposition},
$\resab{[n,\infty)}(\mu^0)$ has the \Eb\ decomposition into proper \Eb s $\xi^m$ for $m=1,\dots,T$,
and hence, $\mu^0= \hbeta^n + \sum_{k=1}^T \xi^m$, which
is an \Eb\ decomposition.
However, this violates the uniqueness of \Eb\ decompositions since
the decomposition $\mu^0= \sum_{m=1}^M \zeta^m $ begins with a proper
\Eb, and hence, the \immsucc\ of $\mu^0$ in $\cE$ is given by
$\beta^1 + \mu^0$, which coincides with $\mu^1$ by Lemma \ref{lem:E-unique}, Part 2.

Suppose that $\zeta^1=\hbeta^{n_1+1}$.
Then, by Part 2 of Lemma \ref{lem:E-unique},  
we have 
$\mu^1 = \beta^{n_1+1}+ \sum_{m=2}^M \zeta^m$.
By contradiction, assume that the \immsucc\ of $\mu^0$ in $\cE$ is given by
$\beta^1 + \mu^0\in \cE$. 
Then, by Corollary \ref{cor:decomposition},
$\resab{[1,\infty)}(\mu^0)$ has the \Eb\ decomposition into proper \Eb s,
i.e., $\mu^0= \sum_{k=1}^T \xi^m$ where $\xi^m$
are proper.
However, since the smallest \Eb\ of $\mu^0 = \sum_{m=1}^M \zeta^m $ is maximal, it violates the uniqueness of \Eb\ decompositions of members of $E$, and hence, the \immsucc\ of $\mu^0$ in $\cE$ is given by
$\beta^t + \resab{[t,\infty)}(\mu^0)$ for some $t$ where
$\hbeta^t \equiv \mu^0 \reS [1,t)$.
By Corollary \ref{cor:decomposition},
$ \resab{[t,\infty)}(\mu^0)=\sum_{m=1}^\infty \theta^m$
is a decomposition into proper \Eb s, and hence,
$\mu^0=\hbeta^t + \sum_{m=1}^\infty \theta^m$ is an \Eb\ decomposition.
By the uniqueness of the \Eb\ decomposition, we have
$t=n_1+1$, and the \immsucc\ of $\mu^0$ in $\cE$
coincides with $\mu^1$ by Lemma \ref{lem:E-unique}, Part 2.
For both cases, $\mu^1$ is the \immsucc\ of $\mu^0$ in
$\cE$ as well as in $E$.

Let us show that $E \subset \cE$.
By Lemma \ref{lem:E-unique},
  $E$ is \zec, and by Lemma \ref{lem:seq}, $E=\set{\tau^n : n\ge 0}$ where $\tau^n$ are the \cf s defined in Lemma \ref{lem:seq}.
  By Part 2 above, if  $\tau^n \in E\cap \cE$, then $\tau^{n+1} \in 
  E\cap \cE$.  
  Since $\tau^0\in E\cap \cE$, by induction, we prove $E \cap\cE$.

\end{proof}
 
\begin{lemma}\label{lem:E-cE-rule}
Let $\cE$ and $E$ be the collections defined in Proposition \ref{prop:E-cE-rule}.
Then, $\cE$ is a subset of $E$.  In particular, $\cE = E$.
\end{lemma}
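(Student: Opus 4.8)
The plan is to enumerate both collections by their chains of \immsucc s starting from the zero \cf, and to show that these two chains coincide term by term; the equality $\cE = E$ then follows at once.

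First I would invoke Lemma \ref{lem:seq} twice. Since $\cE$ is a \zec\ collection for positive integers, let $\sigma^0$ be the zero \cf\ and let $\sigma^{n+1}$ be the \immsucc\ of $\sigma^n$ in $\cE$ for each $n\ge 0$; Lemma \ref{lem:seq} then gives $\cE = \set{\sigma^n : n\ge 0}$, and also that $\sigma^n$ is the \immpred\ of $\sigma^{n+1}$ in $\cE$. Likewise, since $E$ is \zec\ by Lemma \ref{lem:E-unique}, let $\tau^0$ be the zero \cf\ and let $\tau^{n+1}$ be the \immsucc\ of $\tau^n$ in $E$; Lemma \ref{lem:seq} gives $E = \set{\tau^n : n\ge 0}$. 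Both sequences are well defined precisely because $\cE$ and $E$ are \zec.

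Next I would prove $\tau^n = \sigma^n$ for every $n\ge 0$ by induction on $n$. The base case is immediate since $\tau^0$ and $\sigma^0$ are both the zero \cf. For the inductive step, assume $\tau^n = \sigma^n$. By Proposition \ref{prop:E-cE-rule}, Part 2, we have $E\subseteq \cE$, so $\tau^n\in E\cap\cE$; hence Proposition \ref{prop:E-cE-rule}, Part 1 applies and tells us that the \immsucc\ of $\tau^n$ in $E$ equals the \immsucc\ of $\tau^n$ in $\cE$. The former is $\tau^{n+1}$ by definition, and by the inductive hypothesis the latter is the \immsucc\ of $\sigma^n$ in $\cE$, which is $\sigma^{n+1}$. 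Thus $\tau^{n+1} = \sigma^{n+1}$.

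Finally, from $\tau^n = \sigma^n$ for all $n\ge 0$ I would conclude $\cE = \set{\sigma^n : n\ge 0} = \set{\tau^n : n\ge 0} = E$, and in particular $\cE\subseteq E$, completing the proof. I do not expect a genuine obstacle: the argument rests entirely on the structure statement Lemma \ref{lem:seq} (applicable since both collections are already known to be \zec) together with Proposition \ref{prop:E-cE-rule}. The only point requiring a moment's care is verifying that each $\tau^n$ lies in $E\cap\cE$ so that Part 1 of that proposition is available, and this is exactly what Part 2 of the same proposition supplies.
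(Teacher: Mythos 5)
Your proof is correct, but it takes a different route from the paper's. The paper proves $\cE\subseteq E$ directly: given $\mu\in\cE$, it splits into the two cases of Definition \ref{def:zec-N}, Part 2 according to whether $\td\mu=\beta^1+\mu$ or $\td\mu=\beta^T+\resab{[T,\infty)}(\mu)$ with $\mu\equiv\hbeta^T \reS [1,T)$, and in each case invokes Corollary \ref{cor:decomposition} to produce an \Eb\ decomposition of $\mu$ (or of its tail $\resab{[T,\infty)}(\mu)$, prepended by $\hbeta^T$), so that $\mu\in E$ by the definition of $E$; the equality $\cE=E$ then follows from Proposition \ref{prop:E-cE-rule}, Part 2. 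You instead enumerate both collections as successor chains via Lemma \ref{lem:seq} (legitimate, since $\cE$ is \zec\ by hypothesis and $E$ is \zec\ by Lemma \ref{lem:E-unique}) and show by induction that the chains agree term by term, using Part 1 of Proposition \ref{prop:E-cE-rule} to transfer successors and Part 2 to guarantee $\tau^n\in E\cap\cE$. This is essentially the same enumeration-and-induction template the paper itself uses to prove Part 2 of Proposition \ref{prop:E-cE-rule}, now redeployed to get the reverse inclusion; in fact you could even skip the appeal to Part 2, since $\tau^{n+1}\in E$ and $\sigma^{n+1}\in\cE$ by construction and the inductive step identifies them, so $\tau^{n+1}\in E\cap\cE$ automatically. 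What your argument buys is brevity and the fact that it runs purely on the formal successor structure, never touching block decompositions again; what the paper's argument buys is the explicit extra information that every member of $\cE$ admits an \Eb\ decomposition, which is the working form of the lemma used later (e.g.\ in the proof of Theorem \ref{thm:block-def-N}).
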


\begin{proof}
Let $\mu $ be a \cf\ in $\cE$.
Let us show that $\mu $ has an \Eb\ decomposition.
If $\td\mu = \beta^1+\mu$, then by Corollary \ref{cor:decomposition},
$\mu$ has a decomposition into proper \Eb s, and hence,
$\mu\in E$.
Suppose that
$\hbeta^T \equiv \mu \reS [1,T)$ and
$\td\mu=\beta^T+ \resab{[T,\infty)}(\mu)$.
Then, by Corollary \ref{cor:decomposition}, we conclude that
$\resab{[T,\infty)}(\mu)\in\cE$ has a block decomposition into
proper \Eb s $\zeta^m$ for $m\ge 1$.
Thus, $\mu = \hbeta^T + \sum_{m=1}^\infty \sum \zeta^m$ makes
an \Eb\ decomposition, and hence,
$\mu\in E$.
By Proposition \ref{prop:E-cE-rule}, we prove that 
$\cE = E$. 

\end{proof}

\subsection{Proofs of Theorem \ref{thm:block-def-N} and \ref{thm:block-R} } 

Let's prove Theorem \ref{thm:block-def-N}, Part 2. 
Let  $\cE$ be a \zec\ collection for positive integers, and
let $E$ be the \aorcoll\ collection determined by the \immpred s $\hbeta^n$ 
of $\cE$ for $n\ge 2$.
If $\zeta^m$ for $m=1,\dots,M$ are \Eb s defined in Theorem \ref{thm:block-def-N}, Part 2, then $\mu:=\sum_{m=1}^M \zeta^m$ is a member of $E$, and by 
Lemma \ref{lem:E-cE-rule}, $\mu$ is a member of $\cE$.

Let's prove Theorem \ref{thm:block-def-N}, Part 1. 
Let $\cE$ be a \zec\ collection for positive integers, and let $E$
be the \aorcoll\ collection determined by $\hbeta^n$ of $\cE$ for $n\ge 2$. 
Then by Lemma \ref{lem:E-cE-rule}, $\cE = E$, and this immediately proves the property of Theorem \ref{thm:block-def-N}, Part 1 (a).
The property of Part 1(b) follows from Lemma \ref{lem:E-unique}, Part 2 since $\cE = E$.
Let us prove the if-part of Theorem \ref{thm:block-def-N}, Part 1.
Let $\cE$ be an \aorcoll\ collection of \cf s satisfying properties described in
Theorem \ref{thm:block-def-N}, Part 1.  Then, 
Theorem \ref{thm:block-def-N}, Part 1(a) implies that $\cE$ is the subset of 
the \aorcoll\ collection $E$ determined by $\hbeta^n$ for $n\ge 2$.
On the other hand, by Lemma \ref{lem:E-unique}, $E$ is \zec, and by 
Lemma \ref{lem:seq},    $E=\set{\tau^n : n \ge 0}$ where $\tau^n$ are the \cf s defined in Lemma \ref{lem:seq}.
By  Lemma \ref{lem:E-unique}, Part 2, each $\tau^n$ is obtained by applying 
the rule given in Theorem \ref{thm:block-def-N}, Part 1(b), beginning with $\tau^0$, and hence,
$E$ is a subset of $\cE$.  Thus, $E=\cE$ is \zec.

Let us prove Corollary \ref{cor:generate-zec}.
Let $\cE$ be the collection defined in the corollary.
Then by Lemma \ref{lem:E-unique}, $\cE$ is \zec, and  $\hbeta^n = \delta^n$ for each $n\ge 2$.
If $\mathcal F$ is another \zec\ collection for positive integers with the \immpred s 
$\hbeta^n=\delta^n$ for $n\ge 2$, then by Lemma \ref{lem:E-cE-rule},   $\mathcal F=\cE$, and hence,
the \aorcoll\ collection $\cE$ is the only \zec\ collection 
for positive integers such that  $\hbeta^n=\delta^n$ for each $n\ge 2$.

Let us prove Theorem \ref{thm:block-R}.
Let $\cE$ be a \zec\ collection   for the interval $\OI$.
The proofs of Corollary \ref{cor:decomposition},
Lemma \ref{lem:E-unique},
Proposition \ref{prop:E-cE-rule},
and Lemma \ref{lem:E-cE-rule}
remain valid for
 \cf s in $\cE^M$ for $M\ge 1$ simply by
reversing the order of the values of the \cf s.
Let us make references to these versions by
{\it the \dver\ of Statement X}, e.g.,
the \dver\ of Corollary \ref{cor:decomposition}.

Suppose that $\cE$ is \zec\ for the interval $\OI$, and
let us prove Theorem \ref{thm:block-R}, Part 1 (b).
Let $\ep\in \cE$, and let $M_k$ for $k\ge 1$ be the finest sequence guaranteed by
Definition \ref{def:immsucc-R}, Part 4, i.e.,
$M_1$ is the smallest choice, and recursively,
$M_{k+1}$ is the smallest choice
that is greater than $M_k$.
By the \dver\ of Corollary \ref{cor:decomposition}, for each $\ep^{k}:=\ep \reS M_k$,
we find a decomposition of $\ep^{k}$ into proper \Ebb s.
Notice that if $\ep\ne 0$ and $\ep^k=\sum_{m=1}^T \zeta^m$ is the \Ebb\ decomposition, then by the \dver\ of
Lemma \ref{lem:E-unique}, Part 1,
the largest non-zero \Ebb\ of the \Ebb\ decomposition of $\ep^{k+1}$ for sufficiently large $k$ is equal to the largest non-zero \Ebb\ of $\ep^k$.
This implies that $\ep^{k+1}=\sum_{m=1}^{T+1} \zeta^m$ is the
\Ebb\ decomposition for some proper
\Ebb\ $\zeta^{T+1}$ while the smaller \Ebb s are the same ones of $\ep^k$.
This proves Theorem \ref{thm:block-R}, Part 1 (b).
Let 
$E$ be the \dorcoll\ collection determined by $\set{\bbeta^n : n\ge 1}$.
The \dver\ of Lemma \ref{lem:E-cE-rule} applied to $\cE^M$
and $E^M$ for each $M \ge 1$ implies Part 1 (c), and this concludes the proof of the only-if part of Theorem \ref{thm:block-R},
Part 1.

Let us prove the if-part of Theorem \ref{thm:block-R}.
Let $\cE$ be an \dorcoll\ collection of \cf s satisfying properties described in Theorem \ref{thm:block-R}, Part 1, and let 
$E$ be the \dorcoll\ collection determined by $\set{\bbeta^n : n\ge 1}$.
As argued for the collections for positive integers, we find
$\cE^M = E^M$ for each $M\ge 1$, and hence, $\cE = E$.
The \dver\ of Lemma \ref{lem:E-unique} implies
Definition \ref{def:immsucc-R}, Part 1,
and Definition \ref{def:immsucc-R}, Part 2 and 3
follow from the statements of Theorem \ref{thm:block-R}, Part 1 (a) and (c)
since $\cE = E$.
Let us prove Definition \ref{def:immsucc-R}, Part 4.
Let  
$\ep=\sum_{m=1}^\infty \zeta^m$ be
the proper \Eb\ decomposition as in Theorem \ref{thm:block-R}, Part 1 (b) with
support $[n_m,i_m]$.
Then, the increasing sequence $M_k:=i_k$ satisfies
the condition described in Definition \ref{def:immsucc-R}, Part 4.
Suppose that there are a \cf\ $\ep$ and an increasing sequence $M_k:=i_k$ such that
the condition described in Definition \ref{def:immsucc-R}, Part 4
is satisfied,
and let us prove that $\ep\in\cE$.
For each $M_k$, the \dver s of Corollary \ref{cor:decomposition} and
Lemma \ref{lem:E-unique} imply the existence and uniqueness of proper \Ebb s $\zeta^j$
for $j=1,2,\dots$ such that
$\ep \equiv \sum_{j=1}^\infty \zeta^j \reS M_k$ for $k=1,2,\dots$.
Hence, $\ep = \sum_{j=1}^\infty \zeta^j\in E=\cE $.

Let us prove Theorem \ref{thm:block-R}, Part 2, and let
$\cE$ be \zec.
If $\ep = \sum_{j=1}^\infty \zeta^j$ is the sum of disjoint proper \Ebb s
with support $[n_m,i_m]$, then the increasing sequence $M_k:=i_k$
for $k\ge 1$ satisfies the condition of Definition \ref{def:immsucc-R}, Part 4, and hence, $\ep\in\cE$.

Let us prove Corollary \ref{cor:generate-zec-real}. 
Let $\cE$ be the collection defined in the corollary.
By the definition of the \dorcoll\ collection determined by $\set{\delta^n : n\ge 2}$,
the property described in Theorem \ref{thm:block-R}, Part 1(a,b) is trivially satisfied.
By the descending version of Lemma \ref{lem:E-unique}, Part 2,
the properties described in Theorem \ref{thm:block-R}, Part 1(c) are   satisfied
for $\cE^M$ for all $M\ge 1$.
Thus, by Theorem \ref{thm:block-R}, $\cE$ is \zec.
Let $M$ be an arbitrary positive integer.
By the descending version of  Lemma \ref{lem:E-unique}, Part 2, the \immsucc\ of $\delta^n \reS M$ in $\cE^M$ is 
$\beta^{n-1}$ for each $n\ge 2$, and $\delta^1 \reS M$ in $\cE^M$ is the largest \cf\ in $\cE^M$.
Hence, $\hbeta^n = \delta^n$ for each $n\ge 1$.
Let $\mathcal F$ be another \zec\ collection for $\OI$ with the same 
$\bbeta^n$ for $n\ge 1$.
Then, by the descending version of Lemma \ref{lem:E-cE-rule},
$\mathcal F^M = \cE^M$ for each $M\ge 1$.  Hence, $\mathcal F = \cE$.

\subsection{The integer case}\label{sec:integer}
 
Let us prove Theorem \ref{thm:integers}.
Part 1 (a) follows immediately from Algorithm \ref{alg:E}, and we prove
Part 1 (b) here.
Recall that given $\delta\in\cE$, we denote by $\td\delta$ the immediate successor of $\delta$ in $\cE$.
\begin{lemma}\label{lem:add-one}
Let $\cE$ and $Q$ be the collection and the sequence defined in Theorem \ref{thm:integers}, Part 1 (b), respectively.
If $\delta\in\cE$, then $Q_1 +\sum \delta Q = \sum \td\delta Q$.
\end{lemma}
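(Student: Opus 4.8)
The plan is to read off $\td\delta$ from Definition \ref{def:zec-N}, Part 2 --- which applies because $\cE$ is \zec, so in particular $\td\delta$ exists --- and to split into the two cases that occur there. Since all \cf s in $\cE$ have finite support, both $\sum\delta Q$ and $\sum\td\delta Q$ are honest finite sums, so the rearrangements below are legitimate.

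In the first case, $\td\delta=\beta^1+\delta$. As $\beta^1$ is the first basis \cf, $\sum\beta^1Q=Q_1$, and hence $\sum\td\delta Q=Q_1+\sum\delta Q$ immediately. In the second case, Definition \ref{def:zec-N}, Part 2 supplies an index $n\ge 2$ with $\delta\equiv\hbeta^n\reS[1,n)$ and $\td\delta=\beta^n+\resab{[n,\infty)}(\delta)$. I would then compute
\[
\sum\td\delta Q \;=\; Q_n + \sum_{k\ge n}\delta_k Q_k .
\]
Here I invoke the recursion (\ref{eq:full-recursion}), which $Q$ satisfies with $\mu^n=\hbeta^n$, to write $Q_n=Q_1+\sum\hbeta^n Q$; and since $\ord(\hbeta^n)=n-1$ together with $\delta\equiv\hbeta^n\reS[1,n)$ forces $\sum\hbeta^n Q=\sum_{k=1}^{n-1}\delta_k Q_k$, I obtain
\[
\sum\td\delta Q \;=\; Q_1 + \sum_{k=1}^{n-1}\delta_k Q_k + \sum_{k\ge n}\delta_k Q_k \;=\; Q_1 + \sum\delta Q ,
\]
which is the claim.

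There is no serious obstacle here; the only point requiring attention is the bookkeeping of index ranges --- namely that the ``head'' contribution of $\hbeta^n$, which is supported on indices $<n$, and the ``tail'' $\resab{[n,\infty)}(\delta)$ do not overlap, which is exactly what $\ord(\hbeta^n)=n-1$ guarantees --- and the correct invocation of the two-case successor description. An essentially equivalent route would be to use the \Eb\ decomposition $\delta=\sum_{m=1}^M\zeta^m$ of Theorem \ref{thm:block-def-N}, Part 1 together with the successor rule of Part 1(b): if $\zeta^1$ is proper one is in the first case above, while if $\zeta^1=\hbeta^n$ one again replaces $Q_n$ via (\ref{eq:full-recursion}) and cancels the common blocks $\zeta^2,\dots,\zeta^M$.
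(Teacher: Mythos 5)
Your proof is correct and follows essentially the same route as the paper: split into the two cases of Definition \ref{def:zec-N}, Part 2, handle $\td\delta=\beta^1+\delta$ trivially, and in the other case write $\delta=\hbeta^n+\resab{[n,\infty)}(\delta)$ and substitute $Q_n=Q_1+\sum\hbeta^n Q$ from the recursion (\ref{eq:full-recursion}). The paper's proof is the same computation read in the opposite direction, so there is nothing to add.
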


\begin{proof}
If
$\td\delta=\beta^1 + \delta$, and the statement
$Q_1 + \sum\delta Q = \sum \td\delta Q$ is clearly true.
If $\td\delta\ne\beta^1 + \delta$, then
$\delta =\hbeta^n + \resab{[n,\infty)}(\delta)$,
and $\td\delta=\beta^n + \resab{[n,\infty)}(\delta) $.
Thus,
\begin{align*}
Q_1 + \sum \delta Q
&= Q_1 + \sum \hbeta^n Q + \sum_{k=n}^\infty \delta_k Q_k
=Q_n + \sum_{k=n}^\infty \delta_k Q_k=\sum \td\delta Q
\end{align*}
where the second equality is obtained
by using the recursive definition of $Q$.
\end{proof}

Let us prove that the function $f : \cE-\set{0} \to \nat$
given by $f(\delta) = \sum \delta Q$ is bijective.
Let $\delta^1:=\beta^1$, and recursively define $\delta^{k+1}$
to be the \immsucc\ of $\delta^{k}$ for $k\ge 1$, so that 
$\cE = \set{0}\cup\set{\delta^k : k\ge 1}$ by Lemma \ref{lem:seq}.
Let us use the induction to prove $f(\delta^n) = n$ for all $n\ge 1$.
First, $f(\delta^1)= \sum \delta^1 Q = \sum \beta^1 Q = 1$, and
suppose that there is an index $n\ge 1$ such that
$f(\delta^k) = k$ for all $k\le n$.
Then,
$f(\delta^{n+1}) = \sum \delta^{n+1} Q = Q_1 + \sum \delta^n Q$
by Lemma \ref{lem:add-one}, and hence, by the induction hypothesis,
$f(\delta^{n+1}) = Q_1 + n = n+1$.
Thus, each integer $n\in\nat$ is equal to $\sum \delta Q$ for a unique
non-zero \cf\ $\delta\in \cE$.
This concludes the proof of Part 1 (b).

Part 2 (a) follows immediately from Part 1.
Let us prove Part 2 (b). Note that for the first sentence of Part 2 (b),
the collection is not necessarily \zec.
Let $Q$ and $Z$ be increasing fundamental sequences in $\nat$ that generate
a subset $Y$ under the $\cE$-condition, i.e., $
X_Q^\cE = X_Z^\cE$.
Let us use the induction to show that $Q_n=Z_n$ for each $n\ge 1$.
Since $Q_1$ and $Z_1$ are the smallest integers in $Y$, we have
$Q_1=Z_1$.
Suppose that
$Q_k=Z_k$ for all $1\le k< n$ where $n\ge 2$.
Suppose that $Z_n < Q_n$.
Then, there is a \cf\ $\mu\in \cE$ such that $Z_n = \sum \mu Q$.
If $\ord(\mu)\ge n$, then $Z_n=\sum \mu Q \ge Q_n$,
which contradicts $Z_n < Q_n$.
Thus, $\ord(\mu)<n$.
By the induction hypothesis, $Z_n=\sum \mu Q=\sum_{k=1}^{n-1} \mu_k Q_k =\sum \mu Z$.
Since $Z_n$ and $\sum \mu Z$ are distinct $\cE$-expansions,
it contradicts that $Z$ has the \Eunique.
We conclude that $Q_n \le Z_n$.
If $Q_n< Z_n$, a similar contradiction is derived, and we conclude that
$Q_n = Z_n$.
The second sentence of Part 2 (b) follows immediately from the choice
of $Q$ we made for Part 2 (a), and the uniqueness of increasing sequences
we just proved above.

\subsection{The real number case}\label{sec:real} 
We prove Theorem \ref{thm:real} in this section. 
Let $\cE$ be a \zec\ collection     for the unit interval $\OI$, and recall from Section \ref{sec:D-R} the maximal \cf\ $\bbeta^n$ of order $n$.
For convenience of stating results in this section, we define $Q_0=1$.

\begin{lemma}\label{lem:inequality-1}
Let $Q$ be a decreasing fundamental sequence for the $\cE$-interval $\OI$.
If $n\ge 1$, then
$\sum\bar\beta^n Q \le Q_{n-1}$.

\end{lemma}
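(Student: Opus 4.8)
The statement to prove is Lemma \ref{lem:inequality-1}: if $Q$ is a decreasing fundamental sequence for the $\cE$-interval $\OI$ and $n \ge 1$, then $\sum \bbeta^n Q \le Q_{n-1}$ (with the convention $Q_0 = 1$). The plan is to exploit the defining property of $\bbeta^n$ from Definition \ref{def:immsucc-R}, Part 2: for every $M \ge n$ (with $n \ge 2$), the truncation $\bbeta^n \reS M$ is the \immpred\ of $\beta^{n-1}\reS M$ in $\cE^M$, and for $n=1$ it is maximal in $\cE^M$. Since $Q$ is a fundamental sequence, its unique representation property under $\cE$ means that the map $\delta \mapsto \sum \delta Q$ is injective on $\cE$, and moreover (crucially) it should be \emph{order-preserving} with respect to the descending \lex\ order in the following sense: if $\gamma \aordeq \delta$ on the support $[1,M]$ then $\sum_{k=1}^{M} \gamma_k Q_k \le \sum_{k=1}^{M}\delta_k Q_k$. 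The first step is to establish (or invoke from the surrounding development) this monotonicity, at least for truncations in $\cE^M$.

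**Key steps.** First I would fix $M \ge n$ and work inside $\cE^M$. The finite truncation $\bbeta^n\reS M$ lies in $\cE^M$ and is $\dord \beta^{n-1}\reS M$ when $n \ge 2$ (being its immediate predecessor), and is the largest element of $\cE^M$ when $n = 1$. Applying the order-preservation of $\delta \mapsto \sum_{k=1}^M \delta_k Q_k$, I get
\begin{equation*}
\sum_{k=1}^{M} \bbeta^n_k Q_k \ \le\ \sum_{k=1}^{M}\beta^{n-1}_k Q_k \ =\ Q_{n-1}
\end{equation*}
for $n \ge 2$; for $n = 1$ one compares against the total which, in a fundamental sequence for $\OI$, is bounded by $Q_0 = 1$ (this is exactly where the convention $Q_0=1$ and the fact that $X_Q = \OI$ enter — the supremum of all $\cE$-expansions is $1$). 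The second step is to let $M \to \infty$: since $Q_k > 0$, the partial sums $\sum_{k=1}^M \bbeta^n_k Q_k$ increase with $M$ and are bounded above by $Q_{n-1}$, hence the series $\sum \bbeta^n Q = \sum_{k=1}^\infty \bbeta^n_k Q_k$ converges and its value is $\le Q_{n-1}$. (Convergence also follows from $\bbeta^n$ being a limit of truncations that correspond to genuine elements of $\OI$.)

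**The main obstacle.** The delicate point is justifying the order-preservation: that $\gamma \dord \delta$ within $\cE^M$ implies $\sum \gamma Q \le \sum \delta Q$. This is not automatic from mere injectivity — it needs the structure of $\cE$ being \zec\ together with $Q$ being \emph{decreasing}. The clean way is to use Theorem \ref{thm:block-R}, Part 1(c): the \immsucc\ operation on $\cE^M$ always strictly increases the value $\sum(\cdot)Q$ (each immediate successor replaces a block's "head" by $\beta^{n_K - 1}$ or appends $\beta^{i_K}$, and decreasingness of $Q$ plus the block identities $\sum\bbeta^{n}Q$ controlling the tails make this an increase — indeed if $Q$ is a fundamental sequence these immediate-successor steps are precisely what enumerate $\OI$ in increasing order up to truncation error). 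Chaining immediate successors from $\bbeta^n\reS M$ up to $\beta^{n-1}\reS M$ then yields the inequality. I would therefore structure the proof as: (i) recall that $\bbeta^n\reS M$ is the \immpred\ of $\beta^{n-1}\reS M$; (ii) invoke the monotonicity of $\delta\mapsto\sum\delta Q$ along immediate-successor chains in $\cE^M$; (iii) deduce $\sum_{k=1}^M\bbeta^n_k Q_k \le Q_{n-1}$ for all $M\ge n$; (iv) pass to the limit. If the ambient text has already recorded the monotonicity as a separate observation, step (ii) is a one-line citation; otherwise a short argument via the explicit successor rule of Theorem \ref{thm:block-R}, Part 1(c) and the decreasing hypothesis on $Q$ fills the gap.
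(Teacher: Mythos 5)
Your reduction to the $n=1$ case and the passage to the limit $M\to\infty$ are fine, but the heart of your argument --- step (ii), the claim that $\gamma\dord\delta$ in $\cE^M$ implies $\sum\gamma Q\le\sum\delta Q$ --- is a genuine gap, and the route you suggest for it is circular. Theorem \ref{thm:block-R}, Part 1(c) is a purely combinatorial statement about which \cf\ is the \immsucc\ in $\cE^M$; it says nothing about the values $\sum(\cdot)\,Q$. To show that the successor step $\mu\mapsto\resab{[1,n]}(\mu)+\beta^{n}$ (taken when $\mu\equiv\bbeta^{n+1}\reS(n,M]$) does not decrease the value, you need precisely the tail bound $\sum_{k=n+1}^{M}\bbeta^{n+1}_k Q_k\le Q_{n}$, i.e.\ a truncated form of the very inequality being proved; your own phrase ``the block identities $\sum\bbeta^{n}Q$ controlling the tails'' appeals to $\sum\bbeta^{n}Q=Q_{n-1}$, which is Proposition \ref{prop:max-sum} and is proved in the paper \emph{after}, and by means of, this lemma. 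Nothing recorded before this point gives the order-preservation of $\delta\mapsto\sum\delta Q$: the fundamental-sequence hypothesis only gives a bijection between nonzero \cf s in $\cE$ and $\OI$, not compatibility with the \lex\ order (contrast the integer case, where the successor adds exactly $Q_1=1$).

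The paper's proof avoids monotonicity altogether. For $n\ge2$ it argues by contradiction: if $\sum\bbeta^n Q>Q_{n-1}$, pick the largest proper \Ebb\ $\zeta$ at index $n$, with support $[n,M]$, satisfying $\sum\zeta Q\le Q_{n-1}<\sum(\zeta+\beta^M)Q$; uniqueness of $\cE$-expansions forces $0<Q_{n-1}-\sum\zeta Q<Q_M$, so the $\cE$-expansion $\delta$ of the remainder has $\ord(\delta)>M$, and then $\zeta+\delta$ is an $\cE$-expansion of $Q_{n-1}$ (by Theorem \ref{thm:block-R}, Part 2) distinct from $\beta^{n-1}$, contradicting uniqueness. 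Your $n=1$ argument (partial sums of $\bbeta^1$ are truncations of genuine members of $\cE$, whose values lie in $\OI$) matches the paper and is correct, but for $n\ge2$ you would need either to reproduce an argument of the above kind or to prove the order-preservation independently; as written, the proposal assumes what it must prove.
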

\begin{proof}
For $n=1$,
by definition of being a fundamental sequence for the $\cE$-interval $\OI$,
we have $ \sum_{k=1}^M\bbeta^1_k Q_k<1 $ for large index $M$,
and hence, $\sum\bbeta^1 Q=\lim_{M\to\infty} \sum_{k=1}^M\bbeta^1_k Q_k\le 1 $.
For some $n\ge 2$, and
suppose that $\sum\bbeta^n Q > Q_{n-1}$.
Then, there is a proper \Ebb\ $\xi$ at index $n$ such that $Q_{n-1} < \sum \xi Q$.
Since there are only finitely many proper \Ebb s that are less than $\xi$,
find the largest proper \Ebb\ %
$\zeta$ with index $[n,M]$
such that
\begin{equation} \label{eq:cross}
\sum\zeta Q \le Q_{n-1} < \sum(\zeta+\beta^M) Q.
\end{equation}
Then, by the uniqueness of $\cE$-expansions with the fundamental sequence,
we have a strict inequality
$\al:=\sum \zeta Q<Q_{n-1}<\sum\zeta Q+ Q_M$.
Thus, $0< Q_{n-1}-\al < Q_M$, and by the existence of
$\cE$-expansions of numbers in $\OI$,
we have
$0<Q_{n-1}-\al = \sum \delta Q<Q_M$ for $\delta\in\cE$,
and $\ell:=\ord(\delta)>M$; if $\ell \le M$, then $\sum \delta Q
\ge \delta_{\ell} Q_\ell \ge Q_M$.
However, $Q_{n-1}=\al +\sum \delta Q
=\sum\zeta Q +\sum \delta Q$, and
by Theorem \ref{thm:block-R},
the last expansion is an $\cE$-expansion.
Since $Q_{n-1}$ is an $\cE$-expansion as well, this contradicts the uniqueness of $\cE$-expansions, and
we prove that $\sum\bbeta^n Q \le Q_{n-1}$.
\end{proof}
Let us introduce a lemma as a preparation for the proof of Proposition
\ref{prop:max-sum}.
\begin{lemma}\label{lem:telescoping-sum}
Let $Q$ be a decreasing fundamental sequence for the $\cE$-interval $\OI$.
Let $\sum_{k=1}^\infty \zeta^k$ be an \Ebb\ decomposition with
support $[m_k,a_k]$ such that $m_k - 1=a_{k-1}$ for all $k\ge 2$.
Then, $\sum_{k=1}^\infty \sum\zeta^k Q< Q_{m_1-1}$.
\end{lemma}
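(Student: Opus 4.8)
The plan is to dominate each summand $\sum\zeta^kQ$ by a telescoping difference of terms of $Q$: I will compare the proper \Ebb\ $\zeta^k$ with the maximal \cf\ $\bbeta^{m_k}$ of order $m_k$, invoke Lemma \ref{lem:inequality-1} to control $\sum\bbeta^{m_k}Q$, and sum. The one extra ingredient needed to get a \emph{strict} inequality is to keep, rather than discard, a strictly positive ``surplus'' coming from the first block.

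First I would record the shape of a proper \Ebb. Since $\zeta^k$ is a proper $\bbeta$-block at index $m_k$ with support $[m_k,a_k]$, we have $\zeta^k_j=\bbeta^{m_k}_j$ for $m_k\le j<a_k$, $\zeta^k_{a_k}<\bbeta^{m_k}_{a_k}$, and $\zeta^k_s=0$ for $s\notin[m_k,a_k]$; as all entries are non-negative integers, $\zeta^k_{a_k}\le\bbeta^{m_k}_{a_k}-1$. Writing $c_k:=\sum_{j>a_k}\bbeta^{m_k}_jQ_j\ge 0$ and using that $\bbeta^{m_k}$ has order $m_k$, a direct computation gives
\[
\sum\zeta^kQ\;=\;\sum_{j=m_k}^{a_k-1}\bbeta^{m_k}_jQ_j+\zeta^k_{a_k}Q_{a_k}\;\le\;\Bigl(\sum_{j\ge m_k}\bbeta^{m_k}_jQ_j\Bigr)-c_k-Q_{a_k}\;=\;\sum\bbeta^{m_k}Q-c_k-Q_{a_k}.
\]
By Lemma \ref{lem:inequality-1}, $\sum\bbeta^{m_k}Q\le Q_{m_k-1}$, and since $m_{k+1}=a_k+1$ this yields the key estimate $\sum\zeta^kQ\le Q_{m_k-1}-c_k-Q_{m_{k+1}-1}$.

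Next I would sum over $k$. For $k\ge 2$ I discard the non-negative term $c_k$: for every $K\ge 2$, telescoping gives $\sum_{k=2}^{K}\sum\zeta^kQ\le Q_{m_2-1}-Q_{m_{K+1}-1}\le Q_{m_2-1}=Q_{a_1}$, using $Q_n>0$; since the series has non-negative terms and bounded partial sums it converges, so $\sum_{k\ge 2}\sum\zeta^kQ\le Q_{a_1}$. For $k=1$ I keep $c_1$, so $\sum\zeta^1Q\le Q_{m_1-1}-c_1-Q_{a_1}$, and adding the two bounds gives $\sum_{k=1}^\infty\sum\zeta^kQ\le Q_{m_1-1}-c_1$. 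It remains to see $c_1>0$: since $\bbeta^{m_1}$ is a maximal \cf\ and hence has infinite support, there is an index $j>a_1$ with $\bbeta^{m_1}_j\ge 1$, so $c_1\ge Q_j>0$. Therefore $\sum_{k=1}^\infty\sum\zeta^kQ\le Q_{m_1-1}-c_1<Q_{m_1-1}$.

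The only genuinely delicate point is the \emph{strictness}: the telescoping by itself gives merely $\sum_{k=1}^\infty\sum\zeta^kQ\le Q_{m_1-1}$, and strictness is recovered precisely because $\bbeta^{m_1}$ has infinitely many nonzero entries beyond the support of $\zeta^1$, forcing $c_1>0$ — so one must resist discarding that term. I would also check the degenerate configurations in passing — some $\zeta^k$ equal to the zero block (so $a_k=m_k$ and $m_{k+1}=m_k+1$, which still makes $(m_k)$ infinite and strictly increasing), or $m_1=1$ with $Q_0=1$ — but the displayed inequalities hold verbatim in each case because $Q$ is strictly decreasing and positive.
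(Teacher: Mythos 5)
Your proof is correct and follows essentially the same route as the paper's: bound each proper block by the corresponding maximal \cf\ $\bbeta^{m_k}$, invoke Lemma \ref{lem:inequality-1}, telescope over the contiguous supports, and obtain strictness from the infinite support of the maximal \cf. The only (harmless) difference is bookkeeping: the paper keeps a strict surplus $\sum_{j>a_k}\bbeta^{m_k}_jQ_j>0$ in every block, while you retain it only for $k=1$ and handle the tail via bounded telescoping partial sums.
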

\begin{proof}
By the definition of
a proper \Ebb, $0\le \zeta^k_{a_k}<\bbeta^{m_k}_{a_k}$,
which implies that $\bbeta^{m_k}_{a_k}>0$.
Then, for each $k=1,2,\dots$
\begin{gather}
\sum\zeta^k Q
	 =\sum_{j=m_k}^{a_k} \bbeta^{m_k}_j Q_j - b_k Q_{a_k},\quad
	\text{for some integer $ b_k\ge 1$.}\notag\\
\intertext{By Lemma \ref{lem:inequality-1},
$\sum \bbeta^{m_k} Q \le Q_{m_k-1}$, and it follows that }
\sum\zeta^k Q
\le \sum \bbeta^{m_k} Q - Q_{a_k}
- \sum_{j=a_k+1}^\infty \bbeta^{m_k}_j Q_j
< Q_{m_k-1} - Q_{a_k} \notag
\end{gather}
where $ \sum_{j=a_k+1}^\infty \bbeta^{m_k}_j Q>0$ since $\bbeta^{m_k}$ has infinite support.
Notice that
$ Q_{m_k-1} - Q_{a_k}=Q_{a_{k-1}} - Q_{a_k} $ for $k\ge 2$, and that the above inequalities hold for $\zeta^k=0$ as well.
\begin{align}
\implies \sum_{k=1}^\infty \sum\zeta^k Q
&=\sum\zeta^1 Q + \sum_{k=2}^\infty \sum\zeta^k Q\notag\\
& < Q_{m_1-1} - Q_{a_1}
+\sum_{k=2}^\infty (Q_{a_{k-1}} - Q_{a_k})\notag\\
\intertext{Simplifying the telescoping sum, we have}
& = Q_{m_1-1} - Q_{a_1}
+Q_{a_1}=Q_{m_1-1} .
\label{eq:upper-bound}
\end{align}
\end{proof}
\begin{prop}\label{prop:max-sum} 
Let $Q$ be a decreasing fundamental sequence for the $\cE$-interval $\OI$.
Then 
$\sum\bbeta^m Q = Q_{m-1}$ for each index $m\ge 1$, and  $\sum\delta Q <\sum\bbeta^m Q$  for all
$\delta\in \cE$ of order $ m$. 
\end{prop}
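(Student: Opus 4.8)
The plan is to prove the identity $\sum\bbeta^m Q=Q_{m-1}$ first (the inequality $\le$ being Lemma \ref{lem:inequality-1}) and then read off the strict inequality for \cf s of order $m$. The backbone is a monotonicity statement: \emph{for every $M\ge 1$ the value map $\mu\mapsto\sum\mu Q$ is strictly increasing on $\cE^M$ with respect to the descending \lex\ order.} I would prove this by enumerating $\cE^M$ as $0=\tau^0\dord\tau^1\dord\cdots\dord\tau^T=\resab{[1,M]}(\bbeta^1)$ via repeated \immsucc s --- legitimate because $\cE^M$ is finite (Definition \ref{def:immsucc-R}, Part 1) and the \dver\ of Lemma \ref{lem:seq} applies --- and then checking that each step $\tau^j\to\tau^{j+1}$ raises the value. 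By Definition \ref{def:immsucc-R}, Part 3, the step either adds $\beta^M$, raising the value by $Q_M>0$, or replaces a terminal block $\resab{(n,M]}(\bbeta^{n+1})$ by $\beta^n$, changing the value by $Q_n-\sum_{k=n+1}^{M}\bbeta^{n+1}_kQ_k$; since $\bbeta^{n+1}$ has infinite support, $\sum_{k=n+1}^{M}\bbeta^{n+1}_kQ_k<\sum\bbeta^{n+1}Q\le Q_n$ by Lemma \ref{lem:inequality-1}, so this change is positive as well. Hence the values strictly increase along the chain.

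Granting the monotonicity, the next step is the observation that for each $m\ge1$ and each $M\ge m$, the truncation $\resab{[1,M]}(\bbeta^m)$ is the largest element of $\cE^M$ among those of order $\ge m$: a \cf\ $\ep\in\cE$ has $\ord(\ep)\ge m$ precisely when $\ep\dord\beta^{m-1}$ (and always, when $m=1$), and by Definition \ref{def:immsucc-R}, Part 2, $\resab{[1,M]}(\bbeta^m)$ is the \immpred\ of $\resab{[1,M]}(\beta^{m-1})$ in $\cE^M$ (the maximal element when $m=1$). Feeding this into the monotonicity and letting $M\to\infty$ --- the left partial sums converge by the fundamental-sequence property, the right by Lemma \ref{lem:inequality-1} --- gives $\sum\ep Q\le\sum\bbeta^m Q$ for all $\ep\in\cE$ with $\ord(\ep)\ge m$. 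Now I argue by contradiction: if $\sum\bbeta^m Q<Q_{m-1}$, set $c:=Q_{m-1}-\sum\bbeta^m Q>0$; since $0<\sum\bbeta^m Q\le Q_{m-1}\le1$, the open interval $(\sum\bbeta^m Q,Q_{m-1})$ lies in $\OI$, so any $y$ in it equals $\sum\gamma Q$ for some nonzero $\gamma\in\cE$ by $\OI=X_Q^{\cE}$. Because $y<Q_{m-1}$ we must have $\ord(\gamma)\ge m$ (otherwise $y\ge Q_{\ord(\gamma)}\ge Q_{m-1}$), whence $y=\sum\gamma Q\le\sum\bbeta^m Q<y$ --- a contradiction. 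Therefore $\sum\bbeta^m Q=Q_{m-1}$ for all $m\ge1$.

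For the strict inequality, let $\delta\in\cE$ have order $m$. By Theorem \ref{thm:block-R}, Part 1(b), $\delta$ has a proper \Ebb\ decomposition, and since $\delta_k=0$ for $k<m$ the leading blocks covering $[1,m-1]$ vanish, so $\delta$ is a sum of proper \Ebb s whose first support starts at index $m$; Lemma \ref{lem:telescoping-sum} then gives $\sum\delta Q<Q_{m-1}=\sum\bbeta^m Q$. (One could instead invoke the monotonicity to get $\le$ and rule out equality using uniqueness of $\cE$-expansions, since $\sum\delta Q=Q_{m-1}=\sum\beta^{m-1}Q$ would force $\delta=\beta^{m-1}$, of order $m-1\ne m$.) The main obstacle I anticipate is the monotonicity of the value map on $\cE^M$: it is what turns the order-theoretic maximality of $\bbeta^m$ into an inequality of sums, and its proof hinges on the exact form of \immsucc s in Definition \ref{def:immsucc-R}, Part 3 together with Lemma \ref{lem:inequality-1} applied at the correct index; the remaining steps are a limiting argument and the density of $\cE$-expansions in $\OI$.
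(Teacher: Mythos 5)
Your proof is correct, but it reaches the identity $\sum\bbeta^m Q=Q_{m-1}$ by a genuinely different route than the paper. The paper stays entirely inside the block calculus: it expands the number $\sum\bbeta^m Q$ itself into proper \Ebb s $\zeta^k$ with supports $[m_k,a_k]$, uses Lemma \ref{lem:telescoping-sum} once to force $m_1=m$ and then a second time to trap $Q_{a_1}<\sum_{k\ge 2}\sum\zeta^k Q<Q_{m_2-1}=Q_{a_1}$, a contradiction; the second assertion then follows from Lemma \ref{lem:telescoping-sum} exactly as in your last paragraph. You instead prove an auxiliary order-compatibility statement: the value map $\mu\mapsto\sum\mu Q$ is strictly increasing on each finite truncation $\cE^M$, obtained by walking the \immsucc\ chain and checking, via Definition \ref{def:immsucc-R}, Part 3, that each step raises the value (the key positivity $Q_n>\sum_{k=n+1}^{M}\bbeta^{n+1}_kQ_k$ coming from Lemma \ref{lem:inequality-1} together with the infinite support of $\bbeta^{n+1}$); you then let $M\to\infty$ to get $\sum\ep Q\le\sum\bbeta^m Q$ for every $\ep\in\cE$ of order $\ge m$, and use surjectivity of the expansion map ($X_Q=\OI$) to show no point of $(\sum\bbeta^m Q,\,Q_{m-1})$ can exist, forcing equality. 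What your route buys is an explicit monotonicity principle for $\OI$ (the analogue of the order-vs-value compatibility the paper only records for $\nat$), which is reusable elsewhere; what it costs is the successor-chain bookkeeping, a limiting argument, and reliance on the density of $\cE$-expansions, whereas the paper's argument is shorter because Lemma \ref{lem:telescoping-sum} must be proved anyway for the second claim. Two small points you should make explicit if you write this up: the convergence of the series $\sum\bbeta^{n}Q$ (implicitly granted in Lemma \ref{lem:inequality-1}, as in the paper), and, in your final step, why the first nonzero block of $\delta$ has support starting exactly at index $m$ --- a nonzero proper \Ebb\ at an index $n<m$ whose support reaches $m$ would have the value $\bbeta^{n}_{n}\ge 1$ at index $n$, contradicting $\delta_n=0$; the paper glosses this identically, so it is a shared, easily repaired omission rather than a gap.
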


\begin{proof}
By Lemma \ref{lem:inequality-1}, $Q_{m-1} \ge \sum\bbeta^m Q$.
Suppose that $Q_{m-1} > \sum\bbeta^m Q$ for some $m\ge 1$,
and let $ \sum\bbeta^m Q = \sum_{k=1}^\infty \sum\zeta^k Q$
be the \Ebb\ decomposition into proper \Ebb s $\zeta^k$
with support $[m_k,a_k]$ such that
$\zeta^1$ is a non-zero \Ebb\ and $m_k-1 = a_{k-1}$ for $k\ge 2$.
Then, $Q_{m-1} > \sum\bbeta^m Q> Q_{m_1}$ implies $m_1>m-1$, i.e., $m_1\ge m$.

On the other hand, since $\bbeta^m$ has infinite support,
$Q_m<\sum\bbeta^m Q= \sum_{k=1}^\infty \sum\zeta^k Q< Q_{m_1-1}
$ by Lemma \ref{lem:telescoping-sum}, and it implies that
$m_1-1<m$, i.e., $m_1\le m$.
Since we found $m_1\ge m$ earlier, we have $m=m_1$.
Thus, $\zeta^1_s = \bbeta^m_s$ for $m\le s< a_1$ , so
\begin{gather*}
\sum \bbeta^m Q = \sum\zeta^1 Q + \sum_{k=2}^\infty \sum\zeta^k Q
\implies
Q_{a_1} +\sum_{k=a_1+1}^\infty \bbeta^m Q
\le \sum_{k=2}^\infty \sum\zeta^k Q.\\
\intertext{By Lemma \ref{lem:telescoping-sum}, we find
$\sum_{k=2}^\infty \sum\zeta^k Q < Q_{m_2-1} $,
and hence,}
\implies
Q_{a_1} <
Q_{a_1} +\sum_{k=a_1+1}^\infty \bbeta^m Q
\le \sum_{k=2}^\infty \sum\zeta^k Q < Q_{m_2-1}
= Q_{a_1} .
\end{gather*}
Since this is a contradiction,
we conclude $\sum\bbeta^m Q \ge Q_{m-1}$, and
by Lemma \ref{lem:inequality-1}, we prove that
it is an equality.

Let us prove the second statement.
Let $\sum \delta Q = \sum_{k=1}^\infty \sum\zeta^k Q$ be the
\Ebb\ decomposition where $\ord(\delta)=\ord(\zeta^1)=:m$.
Then, by Lemma \ref{lem:telescoping-sum},
\begin{gather*}
\sum \delta Q= \sum_{k=1}^\infty \sum\zeta^k Q
< Q_{m-1}=\sum\bbeta^m Q.
\end{gather*}

\end{proof}

\subsubsection{Proof of Theorem \ref{thm:real}, Part 1}
Let us prove  the only-if part of Theorem \ref{thm:real}, Part 1 (b).
Assume the notation and context of Theorem \ref{thm:real}, Part 1 (b),
and let $Q$ be a decreasing fundamental sequence for $\OI$.
Then, by Proposition \ref{prop:max-sum}, the sequence satisfies the 
equality (\ref{eq:full-recursion-R}).

Let us prove the if-part of Part 1 (b).
For the existence of an $\cE$-expansion, we shall use the greedy
algorithm in terms of proper \Ebb s.
Let $x\in \OI$.
Then, there is an index $n\ge 1$
such that $Q_n \le x < Q_{n-1}$.
By the equality (\ref{eq:full-recursion-R}), $\sum \bbeta^n Q = Q_{n-1}$, and
hence, there is a proper \Ebb\ $\xi$ of order $n$ such that
$x< \sum\xi Q < Q_{n-1}$.
Thus, by the finiteness of the number of proper \Ebb s at index $n$ that are less than $\xi$,
there must be
a largest proper \Ebb\ $\zeta $ at index $n$ such that $
\sum \zeta Q \le x$.
Note here that $\zeta$ is in fact the largest proper \Ebb\ in $\cE$ such that
$\sum \zeta Q \le x$.
Let $\zeta^1:=\zeta$, $x_1:=x-\sum \zeta^1 Q$, and $x_0:=x$,
and recursively define $x_m:= x_{m-1} - \sum\zeta^m Q$
for $m\ge 1$
where $\zeta^m$ is the largest proper \Ebb\ such that $
\sum \zeta^m Q \le x_{m-1}$.
Then,
$x =\sum_{m=1}^M\sum \zeta^m Q + x_M$ for some $x_M\in[0,1)$ for all $M\ge 1$.
By the choice of $\zeta^m$, we have
$\zeta^m\ne 0$ if $x_{m-1}>0$, and if $x_{M-1}=0$ for some $M\ge 1$,
then $\zeta^m=0$ for all $m\ge M$.

Given $m\ge 1$, suppose that $\zeta^m$ and $\zeta^{m+1}$ are
non-zero \Ebb s, and have supports $[n,j]$ and $[n',j']$, respectively.
Let us prove that $j<n'$.
Suppose that $n'\le j$, and note that $\zeta^{m+1}_{n'}\ge 1$.
Then,
$x_{m+1}=x_m - \sum\zeta^{m+1} Q
\le x_m - Q_{n'}
\le x_m - Q_j
= x_{m-1}-\sum\zeta^m Q - Q_j$.
Notice that $\sum\zeta^m Q + Q_j
=\sum_{k=n}^j \zeta^m_k Q_k + Q_j $.
If $\zeta^m_j+\beta^j<\bbeta^n_j$, then
$\zeta^m + \beta^j$ forms a proper \Ebb\ with the same
support $[n,j]$.
If $\zeta^m_j+\beta^j=\bbeta^n_j$, since $\bbeta^n$ has infinite support, and there is a smallest index $\ell >j$
such that $\bbeta^n_\ell>0$.
Then,
$\zeta':=\zeta^m + \beta^j$ also forms a proper \Ebb\ with the support $[n,\ell]$ where $\zeta'_{\ell}=0
<\bbeta^n_\ell$.
Hence, $0\le x_{m+1}\le x_{m-1} -\sum\zeta' Q$ where $\zeta^{m} \dord
\zeta'$. This contradicts the choice of $\zeta^m$.
Thus, $\zeta^k$ for $k\ge 1$ are decreasing proper \Ebb s with disjoint supports.

Recall $x =\sum_{m=1}^M\sum \zeta^m Q + x_M$ for all $M\ge 1$,
and let us prove that $x_M\to 0$ as $M\to \infty$.
If $s$ is the largest index of the support of $\zeta^M$, then
$x_M<Q_s$; otherwise, we would have chosen a proper \Ebb\ larger than or equal to
$\zeta^M + \beta^s$.
Since $Q_s\to 0$ as $M\to \infty$, we prove that $x_M\to 0$ as well,
and hence,
$x = \sum_{m=1}^\infty\sum \zeta^m Q $.
Thus, we proved that $x$ has an $\cE$-expansion.
The uniqueness of such an expansion follows immediately from the property that if $y$ is a real number in $\OI$ and $y=\sum_{m=1}^\infty\sum\xi^m Q$ is an \Ebb\ decomposition such that $\xi^1\ne 0$,
then $\xi^1$ is the largest proper \Ebb\ such that $\sum \xi^1 Q \le y$.
Let us prove this property.
Let $\xi^m$ be the proper \Ebb s described above.
Write
$
y =\sum_{m=1}^\infty\sum\xi^m Q
	=\sum \xi^1 Q + \sum_{m=2}^\infty\sum\xi^m Q$,
and let $[N,K]$ be the support of $\xi^1$.
	If $ \sum_{m=2}^\infty\sum\xi^m Q$ is zero, then we are done, and if not,
	then by Proposition \ref{prop:max-sum}, this infinite sum is $<Q_{k_0}$ for some $k_0\ge K$.
It follows that
	$y<\sum \xi^1 Q + Q_{K }$.
	Since there is no proper \Ebb\ $\zeta$ such that
	$\xi^1 \dord \zeta \dord \xi^1 + \beta^K$, the inequalities
	$\sum\xi^1 Q \le y < \sum \xi^1 Q + Q_{K }$ implies that $\xi^1$
	is the largest proper \Ebb\ such that $\sum \xi^1 Q \le y$.
Therefore, if $x=\sum_{m=1}^\infty\sum\xi^m Q$ is the non-zero
\Ebb\ decomposition,
then each $\xi^m$ for $m\ge 1$ is the largest proper \Ebb\ that goes into $x_{m-1}$, and hence, $\xi^m = \zeta^m$ for all $m\ge 1$.
This concludes the proof of Theorem \ref{thm:real}, Part 1 (b).

For Part 1 (a), we use the greedy algorithm as explained
around Example \ref{exm:R-greedy}, and
we leave it to the reader.

\subsubsection{Proof of Theorem \ref{thm:real}, Part 2}
\label{sec:proof-theorem-real-part-2}

\begin{lemma}\label{thm:recurrence}
Let $Q$ be a decreasing sequence in $\OI$ converging to $0$.
Let $\cL$ be the \zec\ collection determined by a
\zml\ $L=(e_1,\dots,e_N)$.
The sequence $Q$ is a decreasing fundamental sequence for
the $\cL$-interval $\OI$
if and only if $Q$ is a sequence defined by the linear recurrence
$Q_n =\sum_{k=1}^N e_k Q_{n+k}= e_1Q_{n+1} + \cdots + e_N Q_{n+N}$ for all $n\ge 0$.
\end{lemma}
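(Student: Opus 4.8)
The plan is to reduce the statement to the equality $\sum\bbeta^n Q = Q_{n-1}$ which, by Theorem \ref{thm:real}, Part 1 (b), characterizes decreasing fundamental sequences for the $\cL$-interval $\OI$, and then to translate that equality into the recurrence by an elementary regrouping of the series $\sum\bbeta^n Q$ into blocks of length $N$.

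First I would note that $\cL$ is a \zec\ collection for $\OI$ by Corollary \ref{cor:generate-zec-real}, with maximal \cf s $\bbeta^n=\sum_{k=n}^{\infty}\hat e_{\rem(k-n+1)}\beta^{k}=(\bar0,e_1,e_2,\dots,\hat e_N,e_1,e_2,\dots)$ of order $n$, where $\hat e_j=e_j$ for $1\le j<N$ and $\hat e_N=e_N-1$. Splitting off the first $N$ nonzero entries of $\bbeta^n$ (entries $n,\dots,n+N-1$, carrying $e_1,\dots,e_{N-1},e_N-1$) and re-indexing the tail using $\rem(j+N)=\rem(j)$ gives the identity
\begin{equation*}
\sum\bbeta^n Q \;=\; \sum_{k=1}^N e_k Q_{n+k-1}\;-\;Q_{n+N-1}\;+\;\sum\bbeta^{n+N} Q ,
\end{equation*}
valid for every $n\ge1$. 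In the regime where $\sum\bbeta^n Q$ is a convergent series of non-negative terms this rearrangement is legitimate.

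For the ``only if'' direction, if $Q$ is a decreasing fundamental sequence, then $\sum\bbeta^m Q=Q_{m-1}$ for all $m\ge1$ by Proposition \ref{prop:max-sum}; substituting $\sum\bbeta^{n+N}Q=Q_{n+N-1}$ into the identity collapses it to $Q_{n-1}=\sum_{k=1}^N e_k Q_{n+k-1}$, i.e.\ $Q_m=\sum_{k=1}^N e_k Q_{m+k}$ for all $m\ge0$. For the ``if'' direction, I would assume the recurrence and use the identity to show that the block of summands of $\sum\bbeta^n Q$ with index $j$ in $\{\ell N+1,\dots,(\ell+1)N\}$ equals $Q_{n+\ell N-1}-Q_{n+(\ell+1)N-1}$ (here the recurrence at index $n+\ell N-1\ge0$ is used), so the partial sums of $\sum\bbeta^n Q$ taken at multiples of $N$ telescope to $Q_{n-1}-Q_{n+mN-1}$, which tends to $Q_{n-1}$ since $Q_k\to0$. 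As the series has non-negative terms, it converges with $\sum\bbeta^n Q=Q_{n-1}$ for all $n\ge1$, and Theorem \ref{thm:real}, Part 1 (b) then gives that $Q$ is a decreasing fundamental sequence for the $\cL$-interval $\OI$.

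The genuine content is thus supplied entirely by Theorem \ref{thm:real}, Part 1 (b) and Proposition \ref{prop:max-sum}; the rest is the bookkeeping of the re-indexing above. I expect the only mildly delicate point to be justifying the block-regrouping of $\sum\bbeta^n Q$ in the ``if'' direction before its convergence is known, which is handled by the non-negativity of the summands together with the telescoping of the block partial sums.
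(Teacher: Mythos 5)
Your proposal is correct and follows essentially the same route as the paper: both directions reduce to the identity $\sum\bbeta^{n}Q=Q_{n-1}$ via Proposition \ref{prop:max-sum} and Theorem \ref{thm:real}, Part 1 (b), using the splitting of $\bbeta^{n}$ into its first period of coefficients plus the shifted maximal \cf\ $\bbeta^{n+N}$ (the paper phrases your telescoping as \zquote{repeatedly applying the recurrence to the terms $Q_{n+kN}$}). Your explicit treatment of the block partial sums and non-negativity in the \zquote{if} direction simply spells out what the paper leaves terse.
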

\begin{proof}
Suppose that $Q$ is a decreasing fundamental sequence for $\OI$, and
recall $Q_0=1$.
Given $n\ge 1$,
let $\bbeta^{n+1}$ be the maximal \cf\ of order $n+1$ for the \Lz\ condition.
Then, by Theorem \ref{thm:real}, Part 1 (b),
$
Q_n=\sum\bbeta^{n+1}Q =
\sum_{k=1}^{N-1} e_k Q_{n+k} + \hat e_N Q_{n+N}
+\sum_{k=n+N+1}^\infty \bbeta^{n+1}_k Q_k$.
Since the last term of the RHS
is the maximal \cf\ of order $n+N+1$, by Proposition \ref{prop:max-sum},
it follows
that
\begin{equation}\label{eq:short-recurrence}
Q_n =
\sum_{k=1}^{N-1} e_k Q_{n+k} + \hat e_N Q_{n+N}
+ Q_{n+N},
\end{equation} which is the short linear recurrence.

Suppose that $Q$ satisfies the recurrence (\ref{eq:short-recurrence}).
By repeatedly applying the recurrence to the terms $Q_{n+kN}$ for $k\ge 1$, we find $Q_n = \sum \bbeta^{n+1} Q$ for $n\ge 0$, and $\sum \bbeta^{n+1} > Q_{n+1}$ implies that it is
a decreasing sequence. By Theorem \ref{thm:real}, Part 1 (b),
it is a fundamental sequence for $\OI$.

\end{proof}

The following theorem is available in \cite[Theorem 1]{daykin3}.
\begin{theorem} [Daykin]\label{thm:daykin-2}
Let $Q$ is a decreasing sequence of positive real numbers given by the $L$-linear recurrence given in Lemma \ref{thm:recurrence}.
If the polynomial $f(z)=e_N z^N +\cdots + e_1 z -1$ has a complex root $\ome$ whose modulus is smaller than other complex roots, then
$\ome$ is a real number in $\OI$, and there is a positive real number $a$ such that
$Q_k=a \ome^k$ for all $k\ge 1$.
\end{theorem}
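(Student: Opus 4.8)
The plan is to first pin down the root structure of $f$ and then exploit the positivity of $Q$ through its generating function. For the roots: since $e_k\ge 0$, $e_1\ge 1$ and $e_N\ge 1$, the derivative $f'(z)=Ne_Nz^{N-1}+\cdots+e_1$ is $\ge e_1>0$ for $z>0$, so $f$ is strictly increasing on $[0,\infty)$; as $f(0)=-1<0$ and $f(1)=e_1+\cdots+e_N-1>0$ (the degenerate case $N=1,\ e_1=1$ being excluded, because then the recurrence forces $Q$ constant, contradicting that $Q$ is decreasing), $f$ has a unique zero $\omega_0\in\OI$, and $f'(\omega_0)>0$ shows it is simple. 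Next I would show $f(z)\ne 0$ for $0<|z|\le\omega_0$, $z\ne\omega_0$, using $\sum_{k=1}^N e_k\omega_0^k=1$ and $e_1>0$: if $|z|<\omega_0$ then $|e_Nz^N+\cdots+e_1z|\le\sum_k e_k|z|^k<1$, and if $|z|=\omega_0$ then equality in the triangle inequality would force every $e_kz^k$ (in particular $e_1z$) to be a nonnegative real, hence $z=\omega_0$, a contradiction; in either case $f(z)=(e_Nz^N+\cdots+e_1z)-1\ne0$. Thus $\omega_0$ is the unique zero of $f$ of smallest modulus and every other zero has modulus strictly larger than $\omega_0$, so the hypothesized root $\omega$ is exactly $\omega_0$, which already gives $\omega\in\OI$.

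Next I would pass to the generating function $F(x):=\sum_{n\ge0}Q_nx^n$ (with $Q_0:=1$). Since $(Q_n)$ satisfies the linear recurrence with characteristic polynomial $f$ and is positive, $F$ is rational; its radius of convergence $\rho_F=1/\limsup_nQ_n^{1/n}$ is finite and positive (finite because $Q$ is a nonzero linear recurrence sequence and $f$ has no zero at the origin, positive because $0<Q_n\le Q_0$), and, as $f(0)=-1\ne0$, every pole of $F$ lies among the reciprocals $1/\omega_i$ of the zeros $\omega_i$ of $f$. Because $Q_n\ge0$, Pringsheim's theorem says $x=\rho_F$ is a singularity of $F$, hence a pole, so $1/\rho_F$ is a positive real zero of $f$, forcing $1/\rho_F=\omega_0$ and $\rho_F=1/\omega_0$. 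Since $\rho_F$ is the distance to the nearest pole, every pole $1/\omega_i$ satisfies $|\omega_i|\le\omega_0$, and by the first step only $\omega_0$ can satisfy this; hence $1/\omega_0$ is the only pole of $F$, and it is simple (as $\omega_0$ is a simple zero of $f$), so $F(x)=\dfrac{a}{1-\omega_0x}+p(x)$ for a constant $a$ and a polynomial $p$. Comparing coefficients gives $Q_n=a\omega_0^n$ for all $n>\deg p$, and a downward induction using $Q_n=\sum_{k=1}^N e_kQ_{n+k}$ together with $\sum_ke_k\omega_0^k=1$ extends this to every index; positivity of $Q$ forces $a>0$, so $Q_k=a\omega^k$ with $a>0$, as claimed.

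I expect the main obstacle to be the step showing $1/\omega_0$ is the \emph{only} pole of $F$, i.e.\ that the positive sequence $Q_n$ carries no subdominant component $\sum_{i\ge1}p_i(n)\omega_i^n$. This is exactly where positivity is indispensable: an elementary Binet-type argument runs into trouble when some zero $\omega_i$ has modulus in $(\omega_0,1)$, since then there is no boundedness obstruction and one is forced to invoke the fact that an eventually-positive real linear recurrence sequence has a positive real dominant characteristic root; routing the argument through Pringsheim's theorem packages this cleanly. A secondary point to handle with care is the bookkeeping reconciling ``$Q_k=a\omega^k$ for $k\ge1$'' with the normalization $Q_0=1$ (which pins $a=1$ in the normalized setting but should be stated for $Q$ indexed from $k\ge1$ with a general $a>0$).
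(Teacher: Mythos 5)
Your proposal is correct, but note that the paper never proves Theorem \ref{thm:daykin-2} at all: it is imported as an external result, cited to \cite[Theorem 1]{daykin3}, and the only related work the paper does internally is to verify its \emph{hypothesis} (uniqueness of the smallest-modulus root) via Rouch\'e's Theorem and Lemma \ref{lem:circle} in Section \ref{sec:proof-theorem-real-part-2}. So what you have written is a self-contained replacement for the citation rather than a variant of an internal argument. Your first two steps (strict monotonicity of $f$ on $[0,\infty)$, the unique simple zero $\omega_0\in\OI$, and the equality case of the triangle inequality on $|z|=\omega_0$) essentially reprove, in simpler form, what the paper gets from Rouch\'e plus Lemma \ref{lem:circle}; your version is cleaner here because $e_1>0$ lets you bypass the $\gcd$ condition of Lemma \ref{lem:circle}, which is only needed when $a_1$ may vanish. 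The genuinely nontrivial content — that a positive solution of the recurrence carries no component from a root of modulus in $(\omega_0,1)$, where boundedness gives no obstruction — is handled correctly by your Pringsheim argument: $F=P/\widetilde f$ with $\widetilde f(x)=x^Nf(1/x)$, the radius of convergence is positive (boundedness) and finite (a rational $F$ with infinite radius would be a polynomial, forcing $Q_n=0$ eventually, against positivity), Pringsheim places a pole at the positive real point $\rho_F$, which must be the reciprocal of the unique positive real root $\omega_0$, and then minimality of pole moduli kills every other candidate pole since all other roots of $f$ have modulus strictly greater than $\omega_0$; simplicity of $\omega_0$ gives $F(x)=a/(1-\omega_0x)+p(x)$, and the downward induction via $Q_n=\sum_k e_kQ_{n+k}$ and $\sum_k e_k\omega_0^k=1$ (legitimate, since the recurrence holds at every index by Lemma \ref{thm:recurrence}) propagates $Q_n=a\omega_0^n$ to all $n\ge1$, with $a>0$ from positivity. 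Your closing bookkeeping remark about the normalization is apt: the paper itself uses the $n=0$ instance of the recurrence with $Q_0=1$ only afterwards, to pin $a=1$ in the proof of Theorem \ref{thm:real}, Part 2, so stating the theorem for general $a>0$ as you do is the right reading.
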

We shall show below that the hypothesis of Theorem \ref{thm:daykin-2} is always satisfied for \zml s, and we begin with the following lemma.

\begin{lemma}\label{lem:circle}
Let $f(z)=a_n z^n +\cdots + a_1 z+a_0$ be a polynomial in $\real[z]$, and suppose that
$a_k\ge 0$ for all $1\le k\le n$ such that there are indices $m<\ell$ with $\gcd(m,\ell)=1$ and $a_m a_\ell \ne 0 $.
If $f(\ome)=0$ for some positive real number $\ome$,
then there is no other complex zero of $f$ on the circle centered at the origin with radius $\ome$.
\end{lemma}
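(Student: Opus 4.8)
The plan is to argue by contradiction. Suppose $\zeta$ is a zero of $f$ on the circle $|z|=\omega$ with $\zeta\ne\omega$; since $\omega>0$, we may write $\zeta=\omega e^{i\theta}$ with $\theta\in(0,2\pi)$. First I would record two consequences of the hypotheses. From $f(\omega)=0$, together with $a_k\ge0$ for $1\le k\le n$ and $a_m,a_\ell>0$, $\omega>0$, we get
\[
-a_0=\sum_{k=1}^n a_k\omega^k>0,
\]
so $a_0<0$ and $-a_0$ is a positive real number. Meanwhile $f(\zeta)=0$ gives
\[
-a_0=\sum_{k=1}^n a_k\omega^k e^{ik\theta}.
\]

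Next I would compare magnitudes. Since $|e^{ik\theta}|=1$ and $a_k\omega^k\ge0$,
\[
\sum_{k=1}^n a_k\omega^k=-a_0=\Big|\sum_{k=1}^n a_k\omega^k e^{ik\theta}\Big|\le\sum_{k=1}^n a_k\omega^k|e^{ik\theta}|=\sum_{k=1}^n a_k\omega^k,
\]
so the triangle inequality here is an equality. Hence all nonzero summands $a_k\omega^k e^{ik\theta}$ share a common argument, and as their sum $-a_0$ is a positive real, that common argument is $0$. Because $a_m\omega^m>0$ and $a_\ell\omega^\ell>0$, this forces $e^{im\theta}=1$ and $e^{i\ell\theta}=1$.

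Finally I would invoke coprimality. From $e^{im\theta}=e^{i\ell\theta}=1$ with $0<\theta<2\pi$, write $\theta=2\pi j/m=2\pi j'/\ell$ with integers $1\le j<m$ and $1\le j'<\ell$; then $j\ell=j'm$, and $\gcd(m,\ell)=1$ yields $m\mid j$, contradicting $1\le j<m$. Therefore no such $\zeta$ exists, so $\omega$ is the only zero of $f$ on the circle of radius $\omega$. I expect the only point requiring care is the equality case of the triangle inequality for a sum of complex numbers --- that $|z_1+\cdots+z_r|=|z_1|+\cdots+|z_r|$ forces all nonzero $z_j$ to be nonnegative real multiples of one another --- together with the bookkeeping that $m$ and $\ell$ lie in $\{1,\dots,n\}$, the range on which nonnegativity of the $a_k$ is assumed, so that the terms indexed by $m$ and $\ell$ genuinely appear with nonnegative real coefficient $a_k\omega^k$. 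Everything else is routine.
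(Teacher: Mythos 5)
Your proof is correct and in substance the same as the paper's: both arguments reduce to showing $e^{im\theta}=e^{i\ell\theta}=1$ (equivalently $\cos(m\theta)=\cos(\ell\theta)=1$) for a putative root $\omega e^{i\theta}$, $0<\theta<2\pi$, and then derive the identical coprimality contradiction from $\gcd(m,\ell)=1$. The only cosmetic difference is the mechanism: you use the equality case of the triangle inequality (after correctly noting $-a_0=\sum_{k=1}^n a_k\omega^k>0$), whereas the paper adds $f(\omega e^{i\theta})=0$ to its conjugate and subtracts $f(\omega)=0$ to obtain $\sum_{k=1}^n a_k\omega^k\bigl(1-\cos(k\theta)\bigr)=0$; these are equivalent routes to the same key fact.
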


\begin{proof}
Suppose that there is a complex root $\al=\ome e^{i\theta}$ where $0<\theta<2\pi$, and
it implies that its complex conjugate
$\bar\al=\ome e^{-i\theta}$ is also a root of the polynomial.
\begin{gather*}
0=f(\ome e^{i\theta})
=a_0 + \sum_{k=1}^n a_k \ome^k e^{ik\theta},\quad
0=f(\ome e^{-i\theta})
=a_0 + \sum_{k=1}^n a_k \ome^k e^{-ik\theta}\\
\intertext{By adding the RHS to each other, we have}
0=2a_0 + \sum_{k=1}^n a_k \ome^k( e^{ik\theta}+e^{-ik\theta})
=2a_0 + \sum_{k=1}^n a_k \ome^k\cdot 2\cos(k\theta)
\implies 0= a_0 + \sum_{k=1}^n a_k \ome^k \cos(k\theta)\\
\intertext{On the other hand, $0=f(\ome)
\implies 0= a_0 + \sum_{k=1}^n a_k \ome^k$.
By subtracting the above equation from this one, we have}
0= a_0 + \sum_{k=1}^n a_k \ome^k-
\left(
a_0 + \sum_{k=1}^n a_k \ome^k \cos(k\theta)
\right)=\sum_{k=1}^n a_k \ome^k(1- \cos(k\theta)).
\end{gather*}
Since both $a_m$ and $a_\ell$ are positive and $1-\cos(k\theta)\ge 0$ for all $1\le k\le n$,
it follows $ \cos(m\theta)=\cos(\ell \theta)=1$.
This implies that
$\theta = 2m'\pi/m$ and $\theta =2\ell'\pi/\ell$ for two positive integers
$m'$ and $\ell'$ with $m'<m$ and $\ell'<\ell$.
However, it implies that $m'\ell = m\ell'$, and this equality on the natural numbers cannot happen if $\gcd(\ell,m)=1$
and $m'<m$ and $\ell'<\ell$.
\end{proof}

For example, $3z^n + z - 1$ has a unique positive real zero $\ome$ by Descartes\rq\ Rule of Sign,
and by Lemma \ref{lem:circle}, it has no other complex roots on the circle passing through $\ome$ centered at the origin
since $\gcd(a_n,a_1)=1$.

Let us prove Theorem \ref{thm:real}, Part 2.
For the characteristic polynomial $e_N z^N +\cdots e_1 z - 1$
for the linear recurrence,
we have a unique positive real zero $\ome$, and
Rouch\'e's Theorem on the setting
$\abs{e_N z^N +\cdots e_1 z }<1$ on a circle with radius smaller than $\ome$ implies that
$\ome$ has the smallest modulus.
Moreover, by Lemma \ref{lem:circle}, it is the only one with the smallest modulus.
By   Theorem \ref{thm:daykin-2},
we find that if $Q$ is a decreasing fundamental sequence
for the $\cL$-interval, then there is a positive real number $a$ such that
$Q_k=a\ome^k$ for all $k\ge 1$.
By Lemma \ref{thm:recurrence},
$1=Q_0=\sum_{k=1}^N e_k Q_k = a \sum_{k=1}^N e_k \ome^k =a$,
and hence, $a=1$.
It remains to show that the sequence $Q$ given by
$Q_k=\ome^k$ for all $k\ge 1$ is a fundamental sequence.
Since the sequence clearly satisfies the recurrence (\ref{eq:short-recurrence})
given in Lemma \ref{thm:recurrence},
it is a decreasing fundamental sequence.

\subsubsection{Proof of Proposition \ref{prop:dominant}}

The application of Rouch\'e's Theorem demonstrated in Section 
\ref{sec:proof-theorem-real-part-2} relies on
the constant term $e_0$ being positive, not particularly on $e_0=1$, so
we can say, $e_N z^N +\cdots e_1 z-e_0$ has no zeros inside the circle of radius $\ome$ if $\ome$ is the only positive zero of this polynomial that has coefficients in $\real$.
If we use Lemma \ref{lem:circle} with its conditions imposed on
$e_N,\dots,e_1$, we prove that $\ome$ is a unique zero with smallest modulus.
Since the polynomial in Proposition \ref{prop:dominant}
is the reciprocal version of $e_Nz^N +\cdots + e_1z - e_0$,
it proves that it is a dominant polynomial.

\subsection{The $p$-adic number case}\label{sec:p-adic}

When non-zero \cf s $\mu$ are used for $p$-adic integers,
the smallest index $n$ such that $\mu_n\ne 0$ is called
{\it the $p$-adic order of $\mu$}, denoted by $\ord_p(\mu)$. Let $\ord_p(0)=\infty$.
This definition is well-suited for the order of the $p$-adic numbers,
$\ord_p(x)$ for $x\in \zz_p$.

\subsubsection{Proof of Theorem \ref{thm:p-adic}, Part 1} 
Let $Q$ be a decreasing sequence such that $\sum \ep Q = \sum \delta Q$ for non-zero \cf s $\ep$ and
$\delta$ in $\cE$, and
let us claim $n:=\ord_p(\ep)=\ord_p(\delta)$.
If $n=\ord_p(\ep)<n':=\ord_p(\delta)$, then
$\ord_p(\sum \ep Q )=\ord_p(Q_n)<\ord_p(Q_{n'})=\ord_p(\sum \delta Q )$ since $\ep_n$ and $\delta_{n'}$ are $<p$ and $Q$ is decreasing.
This contradicts that $\ord_p(\sum \ep Q)=\ord_p(\sum \delta Q)$.
Without loss of generality, we conclude that $n=\ord_p(\ep)=\ord_p(\delta)$.
Suppose that there is a smallest positive index $s\ge n$ such that $\ep_s < \delta_s$.
Then, $\ep_s Q_s \equiv \delta_s Q_s \mod p^{m+1}$ where
$m=\ord_p(Q_s)\ge 0$, and hence,
$\ep_s (Q_s/p^m) \equiv \delta_s (Q_s/p^m) \mod p $.
Since $ Q_s/p^m\not\equiv 0\mod p$, we have
$\ep_s\equiv \delta_s \mod p$.
Since the values are $<p$, we find that $\ep_s=\delta_s$, which is a contradiction. Thus, without loss of generality, we prove that
$\ep=\delta$.

\subsubsection{Proof of Theorem \ref{thm:p-adic}, Part 2}

\begin{lemma}\label{lem:ord-Q-Z}
Let $p$ be prime, and let $\cE$ be an arbitrary collection of \cf s $\ep$ such that $\ep_k <p$ for all $k\ge 1$.
Let $Q$ and $Z$ be decreasing sequences in $\zz_p$
such that $X_Q = X_Z$. Then,
\begin{equation}
\ord_p(Q_k)=\ord_p(Z_k)\quad\text{ for all $k\ge 1$}.
\label{eq:QZ-order}
\end{equation}
\end{lemma}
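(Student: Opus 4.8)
The plan is to identify, for a decreasing sequence $Q$ in $\zz_p$, the set of $p$-adic valuations attained on $X_Q$ --- namely $\Set{\ord_p(x) : x\in X_Q} = \Set{\ord_p(Q_k) : k\ge 1}$ --- and then to exploit the hypothesis $X_Q=X_Z$ together with the fact that $k\mapsto\ord_p(Q_k)$ is strictly increasing.

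First I would establish the valuation identity $\ord_p\bigl(\sum\delta Q\bigr)=\ord_p\bigl(Q_{\ord_p(\delta)}\bigr)$ for every non-zero $\cE$-\cf\ $\delta$. Set $n:=\ord_p(\delta)$, so that $\delta_n$ is a positive integer with $0<\delta_n<p$, hence a unit in $\zz_p$. Since $Q$ is decreasing we have $\ord_p(Q_k)\ge\ord_p(Q_n)+(k-n)$ for all $k\ge n$, which both forces $\delta_kQ_k\to 0$ in $\zz_p$ (so that the tail $\sum_{k>n}\delta_kQ_k$ converges) and gives $\ord_p(\delta_kQ_k)>\ord_p(Q_n)$ for every $k>n$; writing $\sum\delta Q=\delta_nQ_n+\sum_{k>n}\delta_kQ_k$, the strict ultrametric inequality then yields $\ord_p\bigl(\sum\delta Q\bigr)=\ord_p(\delta_nQ_n)=\ord_p(Q_n)$. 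Because a collection of \cf s contains every basis \cf\ $\beta^k$ and $\sum\beta^k Q=Q_k$, every value $\ord_p(Q_k)$ is attained on $X_Q$, and hence $\Set{\ord_p(x):x\in X_Q}=\Set{\ord_p(Q_k):k\ge 1}$; the same reasoning applied to $Z$ gives $\Set{\ord_p(x):x\in X_Z}=\Set{\ord_p(Z_k):k\ge 1}$.

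From $X_Q=X_Z$ it now follows that $\Set{\ord_p(Q_k):k\ge 1}=\Set{\ord_p(Z_k):k\ge 1}$ as subsets of $\nat\cup\set 0$. Both maps $k\mapsto\ord_p(Q_k)$ and $k\mapsto\ord_p(Z_k)$ are strictly increasing --- this is precisely what it means for $Q$ and $Z$ to be decreasing --- so each is the unique order-preserving enumeration of this common set, and a one-line induction then forces them to agree: $\ord_p(Q_1)$ and $\ord_p(Z_1)$ are both its minimum, and if $\ord_p(Q_j)=\ord_p(Z_j)$ for all $j\le k$, then $\ord_p(Q_{k+1})$ and $\ord_p(Z_{k+1})$ are both the minimum of the common set with those values deleted. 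This gives $\ord_p(Q_k)=\ord_p(Z_k)$ for all $k\ge 1$, which is (\ref{eq:QZ-order}).

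I do not expect a genuine obstacle here. The one step that requires care is the valuation identity of the second paragraph, whose entire force is that the hypothesis $\delta_n<p$ makes the leading coefficient $\delta_n$ a $p$-adic unit --- so the leading term $\delta_nQ_n$ is not annihilated --- while the strict growth of $\ord_p(Q_k)$ coming from $Q$ being decreasing keeps every later term strictly deeper; once that is in place, the remainder is bookkeeping with sets of non-negative integers.
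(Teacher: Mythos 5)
Your proof is correct, and its engine is the same ultrametric fact the paper uses: since $0<\delta_n<p$ makes the leading coefficient a $p$-adic unit and the decreasing property makes $k\mapsto\ord_p(Q_k)$ strictly increasing, one gets $\ord_p(\sum\delta Q)=\ord_p(Q_{\ord_p(\delta)})$ for every non-zero $\delta\in\cE$ (your convergence remark for the tail is a welcome extra detail the paper leaves implicit). Where you diverge is in how this identity is converted into the conclusion. The paper argues by contradiction: it takes a smallest index $k$ with, say, $\ord_p(Q_k)<\ord_p(Z_k)$, writes $Q_k=\sum\ep Z$, and runs a three-way case analysis on $n=\ord_p(\ep)$ versus $k$ (equality contradicts the assumed strict inequality, $n<k$ contradicts that $Q$ is decreasing via minimality of $k$, $n>k$ gives $\ord_p(Q_k)>\ord_p(Z_k)>\ord_p(Q_k)$). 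You instead use the identity symmetrically to show that the set of valuations attained on $X_Q$ is exactly $\set{\ord_p(Q_k):k\ge 1}$ (here you correctly invoke that a collection of \cf s contains all basis \cf s, so every $Q_k$ itself lies in $X_Q$), likewise for $Z$, and then conclude from $X_Q=X_Z$ that the two strictly increasing enumerations of the common valuation set must agree term by term. The two arguments have the same mathematical content --- your inclusion $\set{\ord_p(Q_k)}\subseteq\set{\ord_p(Z_j)}$ is exactly the paper's step of expanding $Q_k$ in terms of $Z$ --- but your finish is more symmetric, avoids the minimal-counterexample case analysis, and makes the structural reason for the lemma (uniqueness of the order-preserving enumeration of a set of non-negative integers) explicit, at the cost of first cataloguing all valuations on $X_Q$ rather than working locally at one index.
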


\begin{proof}
Suppose that there is a smallest index $k\ge 1$ such that
$\ord_p(Q_k)<\ord_p(Z_k)$.
Write $Q_k = \sum \ep Z$ for a non-zero \cf\ $\ep \in \cE$, and
$n:=\ord_p(\ep)$.
Then, $\ep_j<p$ for all $j\ge 1$ implies that
$\ord_p(Q_k) =\ord_p(\sum\ep Z)= \ord_p(Z_n)$.
If $n= k$, then
$n=k$ contradicts the choice of $k$,
and if $n<k$, then $\ord_p(Q_k) = \ord_p(Z_n)=\ord_p(Q_n)$
by the choice of $k$, which contradicts the decreasing property of $Q$.
Thus, $n>k$, and it implies the following contradiction:
$\ord_p(Q_k)= \ord_p(Z_n)> \ord_p(Z_k)>\ord_p(Q_k)$.
Using a similar argument, we also derive a contradiction from
$\ord_p(Z_k)<\ord_p(Q_k)$.
Therefore, we prove (\ref{eq:QZ-order}).
\end{proof}

\begin{lemma}\label{lem:p-adic-decomposition}
Let $\cE$ be a \zec\ collection for $p$-adic integers.
If $\ep \in \cE$,
then $ \resab{[n,\infty)}(\ep) \in \cE$ for all indices $n\ge 1$.
\end{lemma}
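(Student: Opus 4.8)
The plan is to exploit the definition of a \zec\ collection for $p$-adic integers: $\cE$ is the completion of some \zec\ subcollection $\cE_0$ for positive integers, so every $\ep\in\cE$ is the limit of a sequence $\ep^k\in\cE_0$ of finitely supported \cf s, meaning there is an increasing index sequence $M_k\ge\ord(\ep^k)$ with $\ep\equiv\ep^k\reS M_k$ for all $k$. The whole statement will follow once I establish the finite-support analogue, namely that $\resab{[n,\infty)}(\delta)\in\cE_0$ for every $\delta\in\cE_0$ and every $n\ge 1$. Indeed, granting that, I set $\nu^k:=\resab{[n,\infty)}(\ep^k)\in\cE_0$; each $\nu^k$ is finitely supported, restriction to $[n,\infty)$ cannot increase the largest nonzero index so $\ord(\nu^k)\le\ord(\ep^k)\le M_k$, and from $\ep\equiv\ep^k\reS M_k$ one gets $\resab{[n,\infty)}(\ep)\equiv\nu^k\reS M_k$ (both sides vanish on $[1,n-1]$ and agree on $[n,M_k]$). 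Hence $\resab{[n,\infty)}(\ep)$ is the limit of $\{\nu^k\}$ in $\cE_0$, so it lies in $\cE$. This last step is routine bookkeeping, and $n=1$ is trivial.

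For the finite-support claim I would argue with the block machinery of Section \ref{sec:proofs}. Write $\delta=\al+\rho$ with $\al:=\resab{[1,n-1]}(\delta)$ and $\rho:=\resab{[n,\infty)}(\delta)$. If $\al=0$, then $\rho=\delta\in\cE_0$ and there is nothing to prove. Otherwise let $r:=\ord(\al)$, so $1\le r\le n-1$ and $\rho\equiv 0\reS[1,r)$ since $\rho$ vanishes on $[1,n-1]$. Then $\cE_0$, $\delta=\al+\rho$, $\al$, $\rho$, $r$ satisfy the hypotheses of Lemma \ref{lem:decomposition}, so Corollary \ref{cor:decomposition} provides a decomposition of $\rho$ into proper blocks, whose supports are pairwise disjoint (as is evident from the construction in that proof) and only finitely many of which are nonzero because $\rho$ has finite support. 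Applying Theorem \ref{thm:block-def-N}, Part 2, to these finitely many blocks upgrades the decomposition to membership: $\rho=\resab{[n,\infty)}(\delta)\in\cE_0$.

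The step I expect to need the most care is verifying the hypotheses of Lemma \ref{lem:decomposition} and Corollary \ref{cor:decomposition} — in particular that the tail $\rho$ really is congruent to $0$ below $\ord(\al)$, which is exactly what forces the harmless case split on whether $\al$ vanishes — together with the observation that Theorem \ref{thm:block-def-N}, Part 2 requires only pairwise disjoint block supports, not consecutive ones, so a proper-block decomposition beginning at an arbitrary index still certifies membership in $\cE_0$. Everything else (preservation of finite support, monotonicity of $\ord$ under restriction, the limit bookkeeping) is immediate. An alternative that sidesteps Corollary \ref{cor:decomposition} is to take the unique block decomposition $\delta=\sum_m\zeta^m$ furnished by Theorem \ref{thm:block-def-N}, Part 1 (a), identify the block whose support contains $n$, truncate it to indices $\ge n$ (checking the truncation is still a proper block, or serves as the maximal leading block), discard the earlier blocks, and again invoke Part 2; this is equivalent but a little longer to write out.
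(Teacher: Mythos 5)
Your proposal is correct, and its overall skeleton is the same as the paper's: reduce to the finite-support statement that $\resab{[n,\infty)}(\delta)\in\cE_0$ for $\delta\in\cE_0$, then note that $\resab{[n,M_k]}(\ep)\in\cE_0$ together with $\ord(\resab{[n,M_k]}(\ep))\le M_k$ exhibits $\resab{[n,\infty)}(\ep)$ as a limit of members of $\cE_0$, hence a member of the completion $\cE$. Where you differ is in how the finite-support core is established. The paper takes the unique \Eb\ decomposition of $\resab{[1,M_k]}(\ep)$ from Theorem \ref{thm:block-def-N}, Part 1 (a), locates the block whose support contains $n$, and uses the observation that restricting an \Eb\ at index $\ell$ to $[n,\infty)$ again yields an \Eb\ at index $\ell$; discarding the earlier blocks and invoking Part 2 then gives membership. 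This is exactly the alternative you sketch at the end. Your primary route instead splits $\delta=\al+\rho$ with $\al=\resab{[1,n-1]}(\delta)$ and feeds this into Lemma \ref{lem:decomposition} and Corollary \ref{cor:decomposition}, which hand you a decomposition of the tail $\rho$ into proper \Eb s with supports in disjoint consecutive intervals, after which Theorem \ref{thm:block-def-N}, Part 2 certifies $\rho\in\cE_0$; the hypothesis check ($\rho\equiv 0\reS[1,r)$ for $r=\ord(\al)\le n-1$, and the trivial case $\al=0$) is exactly as you describe. The trade-off: your route leans only on statements the paper proves in full (Corollary \ref{cor:decomposition} and Part 2), and so avoids having to verify the truncation-of-a-block observation, which the paper asserts with a bare \zquote{notice that}; the paper's route is slightly more direct and needs no case split, at the cost of that small unproved verification (choosing the right lower index $i$ for the truncated block, or recognizing it as the maximal block when nothing of $\hbeta^{\ell+1}$ survives below $n$). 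Both are sound.
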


\begin{proof}
Let $\cE_0$ be a \zec\ collection for positive integers whose completion is $\cE$.
Let $\ep\in \cE$.
Then, by definition, there is an increasing sequence of $M_k$ for $k\ge 1$
such that $\resab{[1,M_k]}(\ep)\in \cE_0$.
Notice that if $\zeta$ is an $\hbeta$-block at index $\ell$, then
$\resab{[n,\infty)}(\zeta)$ is an $\hbeta$-block at index $\ell$ for all indices $n\ge 1$.
Let $\resab{[1,M_k]}(\ep)=\sum_{t=1}^T \zeta^t$ be the unique
$\hbeta$-block decomposition as described in Theorem \ref{thm:block-def-N}.  
For arbitrary positive integer $n\le M_k$, there is an $\hbeta$-block $\zeta^s$
with $1\le s\le T$ such that the support of $\zeta^s$ is $[a,b]$ and 
$a\le n\le b$.  
Thus,
$\resab{[n,M_k]}(\ep)=\resab{[n,b]}(\zeta^s)+\sum_{t=s+1}^T \zeta^t$,
and this implies that given $n\ge 1$, we have $\mu^k:=\resab{[n,M_k]}(\ep)\in \cE_0$ for $k\ge 1$ and $M_k\ge \ord(\mu^k)$.
Since
$\resab{[n,\infty)}(\ep) \equiv \mu^k \reS{M_k}$, the \cf\ is
the limit of a sequence of \cf s in $\cE_0$, and hence, $\resab{[n,\infty)}(\ep)\in \cE$.

\end{proof}

\begin{lemma}\label{lem:QnZn}
Let $\cE$ be the \zec\ collection described in Theorem \ref{thm:p-adic}, Part 2.
Let $Q$ and $Z$ be decreasing sequences such that
$X_Q=X_Z$. Then, for each $n=1,2,\dots$,
there is an \Eb\ $\zeta$ such that $Z_n = \sum \zeta Q$,
$\ord_p(\zeta)=n$, and $\zeta_n=1$.
\end{lemma}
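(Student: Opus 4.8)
The plan is to show that the unique $\cE$-expansion of $Z_n$ in terms of $Q$ is itself the required block. Since $Z_n\in X_Z=X_Q$, Theorem \ref{thm:p-adic}, Part 1 gives a unique non-zero \cf\ $\ep\in\cE$ with $Z_n=\sum\ep Q$; set $\zeta:=\ep$. As each coordinate satisfies $\ep_k<p$ and $Q$ is decreasing, no cancellation modulo $p$ can occur and the lowest-index summand controls the order, so $\ord_p(\sum\ep Q)=\ord_p\!\big(Q_{\ord_p(\ep)}\big)$. Since $\ord_p(Z_n)=\ord_p(Q_n)$ by Lemma \ref{lem:ord-Q-Z} and $k\mapsto\ord_p(Q_k)$ is strictly increasing, this forces $\ord_p(\zeta)=\ord_p(\ep)=n$, hence $\zeta_n\ne0$.

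The crux is showing $\zeta_n=1$, and this is exactly where the bound $\ep_k\le\sqrt p$ is indispensable. Apply the previous step with the roles of $Q$ and $Z$ exchanged — legitimate because $Q_n\in X_Q=X_Z$ and $Z$ is a decreasing sequence with coordinates $<p$ — to obtain a non-zero $\delta\in\cE$ with $Q_n=\sum\delta Z$ and $\ord_p(\delta)=n$. Put $m:=\ord_p(Q_n)=\ord_p(Z_n)$. Reducing both expansions modulo $p^{m+1}$ kills every summand of index $>n$, leaving $Z_n\equiv\ep_n Q_n$ and $Q_n\equiv\delta_n Z_n\pmod{p^{m+1}}$; substituting one into the other and cancelling the factor $Z_n=p^m\cdot(\text{unit})$ gives $\ep_n\delta_n\equiv1\pmod p$. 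Since $1\le\ep_n,\delta_n\le\sqrt p$ we have $1\le\ep_n\delta_n\le p$, and $\ep_n\delta_n=p$ is impossible because $p$ is prime; thus $\ep_n\delta_n<p$, which with $\ep_n\delta_n\equiv1\pmod p$ forces $\ep_n\delta_n=1$, so $\zeta_n=\ep_n=1$.

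Finally one must confirm that $\zeta$ is an \Eb. Since $\zeta\in\cE$ with $\ord_p(\zeta)=n$, Lemma \ref{lem:p-adic-decomposition} gives $\zeta=\resab{[n,\infty)}(\zeta)\in\cE$, and applying the \Eb\ decomposition of Theorem \ref{thm:block-def-N} to suitable truncations $\resab{[1,M_k]}(\zeta)\in\cE_0$ shows that $\zeta$ begins with an \Eb\ at index $n$ whose leading coordinate is $\zeta_n=1$; one then argues, using the decreasing property of $Z$ once more, that this block exhausts $\zeta$, so $\zeta$ is itself the sought \Eb. I expect this last point — ruling out that $\zeta$ is a longer concatenation of blocks — to be the main technical obstacle; by contrast, the congruence $\ep_n\delta_n\equiv1\pmod p$ together with its forced trivial solution is the short conceptual core, and it is precisely there that the hypothesis $\ep_k\le\sqrt p$ cannot be weakened.
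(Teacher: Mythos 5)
Your argument correctly establishes $\ord_p(\zeta)=n$ and the leading-coefficient claim $\zeta_n=1$ (the reciprocal congruence $\ep_n\delta_n\equiv 1 \bmod p$ together with $\ep_n\delta_n\le p$ is exactly the paper's use of the $\sqrt p$ bound). But the proof is incomplete at the point you yourself flag: you never show that the $Q$-expansion of $Z_n$ is a \emph{single} \Eb\ rather than a concatenation of several blocks, and that is the heart of the lemma, not a routine afterthought. Your sketch for it (``applying the \Eb\ decomposition \dots one then argues, using the decreasing property of $Z$ once more, that this block exhausts $\zeta$'') contains no actual mechanism, and the decreasing property of $Z$ alone does not suffice: what is needed is control on coefficient sizes when two $\cE$-\cf s are combined, which is precisely where the other half of the hypothesis, $\ep_k\le (p-1)/2$, enters --- a bound your write-up never uses, even though you claim the $\sqrt p$ bound is the sole place the hypothesis is sharp.

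Concretely, the paper closes this gap as follows. Write $Z_n=\sum\zeta^1 Q+\sum\ep Q$, where $\zeta^1$ is the first (non-zero) \Eb\ of the expansion, at index $\ell$, and $\ep$ is the tail, which lies in $\cE$ by Lemma \ref{lem:p-adic-decomposition}. Since $\zeta^1\in\cE$, the value $\sum\zeta^1 Q=Z_n-\sum\ep Q$ lies in $X_Q=X_Z$, so it equals $\sum\sig Z$ with $\ord_p(\sig)=n$ and $\sig_n=1$; also $\sum\ep Q=\sum\delta Z$ for some $\delta\in\cE$ supported in $(\ell,\infty)$. Subtracting yields
\begin{equation*}
0=\sum_{k=n+1}^{\ell}\sig^0_k Z_k+\sum_{k=\ell+1}^{\infty}\bigl(\delta_k+\sig^0_k\bigr)Z_k ,
\end{equation*}
and here the bound $\le (p-1)/2$ guarantees $\delta_k+\sig^0_k\le p-1<p$, so the order argument for the decreasing sequence $Z$ (as in Part 1 of Theorem \ref{thm:p-adic}) forces every coefficient to vanish; hence the tail is zero and $Z_n=\sum\zeta^1 Q$, i.e.\ $\zeta^1$ is the sought block (and, by uniqueness, coincides with your $\ep$). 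Without an argument of this kind --- and in particular without invoking the $(p-1)/2$ bound --- your proposal does not prove the statement.
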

\begin{proof}
Let $\cE_0$ be a \zec\ collection for positive integers such that $\cE$ is the completion of $\cE_0$, and let $n$ be a positive integer.
Since $Z_n \in X_Q$, by the definition of the completion,
there is a non-zero \Eb\ $\zeta^1$ at index $\ell $ and a \cf\ $\ep$ such that
$Z_n = \sum \zeta^1 Q + \sum\ep Q$ and $\ep_s=0$ for $1\le s\le \ell$.
Since $m:=\ord_p(Z_n)=\ord_p(Q_n)$,
$\ord_p(\zeta^1)=n$,
and by Lemma \ref{lem:p-adic-decomposition}, $\ep \in \cE$.
Recall that \cf s in $\cE_0$ are ascendingly ordered, and
$\ord_p(\ep)\ge \ell+1$.
Since $\zeta^1 \in \cE$, it follows $\sum \zeta^1 Q = Z_n - \sum \ep Q
\in X_Q=X_Z$. It is clear
that $Z_n - \sum \ep Q=\sum\sig Z$ for
some $\sig\in\cE$ with $\ord_p(\sig)=n$ since $\ord_p( \sum\ep Q) \ge \ord_p(Q_{n+1})$ where $\ord_p(\ep)=\ord_p(\sum\ep Q)=\infty$ if $\ep=0$.
Then, $ Z_n - \sum \ep Q = \sum \sig Z$
implies $Z_n \equiv \sig_n Z_n \mod p^{m+1}$, i.e.,
$1\equiv \sig_n \mod p^{ 1}$.
Since $\sig_n<p$, we find $\sig_n=1$.
Write $\sig^0:=\resab{(n,\infty)}(\sig)$, and write
$ \sum \ep Q=\sum\delta Z$ for some $\delta \in \cE$,
so that $\ord_p(\ep)=\ord_p(\delta)\ge \ell+1$.
Then, $ Z_n + \sum\sig^0 Z = Z_n - \sum \ep Q = Z_n - \sum \delta Z$,
and hence,
$0 = \sum\sig^0 Z + \sum\delta Z
=\sum_{k=n+1}^\ell \sig^0_k Z_k
+\sum_{k=\ell+1}^\infty (\delta_k + \sig^0_k) Z_k$.
Notice that
$0\le \delta_k + \sig^0_k\le2\cdot ( p-1)/2=p-1$ for all $k\ge \ell+1$
and $0\le \sig^0_k\le ( p-1)/2 $ for all $n+1\le k\le \ell$.
Since $Z$ is a decreasing sequence in $\zz_p$,
it follows that $\delta_k=\sig^0_k=0$ for all $k\ge \ell+1$,
and $\sig^0_k=0$ for all $n+1\le k\le \ell$.
Hence,
\begin{equation}
\sum\zeta^1 Q =Z_n - \sum\delta Z=Z_n.\label{eq:QnZn}
\end{equation}

Let us show that $\zeta^1_n=1$.
Write $Q_n=\sum \al Z=\al_n Z_n + \sum \al^0 Z$
where $\al\in \cE$ with $\ord_p(\al)=n$ and $\al^0:=\resab{(n,\infty)}(\al)$.
Recall $m:=\ord_p(Z_n)$, then $Q_n=\sum \al Z\equiv \al_n Z_n \mod p^{m+1} $, and hence,
\begin{gather}
Z_n= \zeta^1_n Q_n +\sum \resab{(n,\infty)}(\zeta^1) Q
\implies Z_n \equiv \zeta^1_n Q_n \mod p^{m+1} \notag\\
\implies
Z_n \equiv \zeta^1_n \al_n Z_n \mod p^{m+1}
\implies
1 \equiv\zeta^1_n\al_n \mod p\notag\\
\implies
\zeta^1_n\al_n = ps + 1\text{ for $s\ge 0$}.
\label{eq:root-p}
\end{gather}
On the other hand, $\zeta^1_n \al_n < (\sqrt p)^2=p$,
and the equation (\ref{eq:root-p}) implies that $s=0$ and $\zeta^1_n=1$.
\end{proof}

Let us prove Theorem \ref{thm:p-adic}, Part 2.
Given a positive integer $n$, by Lemma \ref{lem:QnZn},
we have $Z_n=\sum \zeta Q$ and $Q_n = \sum \xi Z$
where $\ord_p(\zeta)=\ord_p(\xi)=n$, $\zeta_n=\xi_n=1$, and $\zeta$ and $\xi$ are s at index $s$ and $t$, respectively. So,
\begin{gather}
Z_n = \sum\zeta Q = Q_n +\sum_{k=n+1}^s \zeta_k Q_k
\implies
Z_n=
\left(Z_n + \sum_{k=n+1}^t \xi_k Z_k \right)+ \sum_{k=n+1}^s \zeta_k Q_k\notag \\
\implies
0= \sum_{k=n+1}^t \xi_k Z_k + \sum_{k=n+1}^s \zeta_k Q_k.
\label{eq:remainder}
\end{gather}
Let us show that $\xi_k=\zeta_k=0$ for all $k\ge n+1$.
Suppose that there is a smallest index $\ell \ge n+1$ such that
$\xi_\ell\ne 0$ or $\zeta_\ell\ne 0$; without loss of generality,
assume that $\ell=n+1$ and $\xi_{n+1}\ne 0$.
By Lemma \ref{lem:QnZn}, there is an \Eb\ $\theta$ at index $u$
with $\ord_p(\theta)=n+1$ and $\theta_{n+1}=1$ such that
$Z_{n+1} = \sum \theta Q$. Thus,
\begin{gather*}
0= \xi_{n+1} Z_{n+1}+ \sum_{k=n+2}^t \xi_k Z_k
+\zeta_{n+1} Q_{n+1}+ \sum_{k=n+2}^s \zeta _k Q_k\\
=
\xi_{n+1}\left( Q_{n+1}+ \sum_{k=n+2}^u \theta_k Q_k \right)+ \sum_{k=n+2}^t \xi_k Z_k
+\zeta_{n+1} Q_{n+1}+ \sum_{k=n+2}^s \zeta _k Q_k\\
\intertext{If $r:=\ord_p(Q_{n+1})$, then it follows}
0\equiv (\xi_{n+1} +\zeta_{n+1} )Q_{n+1} \mod {p^{r+1}}.
\end{gather*}
Since $\xi_{n+1} +\zeta_{n+1}\le p-1$, we find
$\xi_{n+1}=\zeta_{n+1}=0$.
Therefore, $Q_n=Z_n$, and this concludes the proof of Theorem \ref{thm:p-adic}, Part 2.

\subsection{The \Lunique}\label{sec:proof-unique-property}

In this section we prove Theorem \ref{thm:unique}.
Recall that a \zml\ $L=(e_1,\dots,e_N)$ for $N\ge 2$ considered in the theorem is an arbitrary list of positive integers  $e_k $ for $k=1,\dots,N$,
and let $\cL$ denote the \zec\ collection determined by $L$.

Let $\theta$ be a    \cf\ $\theta\in \cL$.  
The \cf\ is called a {\it proper $L$-block at index $n$}
if $\theta$ is a proper  \LLb\ at index $n$ with support $[s,n]$ such that
$n-N+1\le s\le n$.
If   $\theta$ has support $[n-N,n]$  where $n-N\ge 1$  and $\theta_{n-N}=0$,
we call $\theta$ the {\it maximal \Lb\ at index $n$}, and 
if $\theta$ is a maximal \LLb\ at index $n$ where $n\le N$,
it is also called a maximal \Lb\ at index $n$.
{\it The \Lb\ interval} of an \Lb\ $\theta $ is defined to be the interval
of indices $[s,n]$ if $\theta$ is a proper \LLb\ with support $[s,n]$.
{\it The \Lb\ interval of the maximal \Lb\ at index $n$} is defined to be $[n-N+1,n]$ if $n>N$, and $[1,n]$ if $n\le N$.
For example, if $L=(2,3,2)$, then $\theta^0 = \beta^4 + 3\beta^5 + 2\beta^6$
is the maximal \Lb\ at index $6$ with \Lb\ interval $[4,6]$
while $\theta^0$ is not a maximal \LLb\ at index $6$, and the support of the proper \LLb\ $\theta^0 =0\cdot\beta^3 + \beta^4 + 3\beta^5 + 2\beta^6$ is $[3,6]$.
If $\theta$ is a proper \Lb,
then the \Lb\ interval of $\theta$ coincides with the support
of the proper \LLb\ $\theta$.

For each \cf\ $\mu\in\cL$, there are unique \Lb s $\theta^m$ for $m\ge 1$
with \Lb\ interval $[s_m,n_m]$
such that $\mu= \sum_{m=1}^M \theta^m$, $s_1=1$, and
$n_m+1=s_{m+1}$.
We call it {\it the \Lb\ decomposition} of $\mu$.
In addition, if we use non-zero \Lb s only, it is called {\it the non-zero \Lb\ decomposition} of $\mu$.
If $\mu= \sum_{m=1}^M \theta^m$ is the non-zero \Lb\ decomposition
and $[i_m,n_m]$ is the \Lb\ intervals of $\theta^m$
such that $n_m+1=i_{m+1}$ for all $1\le m\le M-1$,
then the decomposition is said to {\it
have no gaps between the \Lb\ intervals.}
The sum of any \Lb s with disjoint \Lb\ intervals
is a member of $\cL$, and it is called
{\it a sum of disjoint \Lb s}.

\begin{lemma}\label{lem:cascade}
Let $Q$ be a sequence in $\nat$ given by the recurrence (\ref{eq:L-recursion}).
\begin{enumerate}

\item
Let $\theta$ be an \Lb\ with \Lb\ interval $[a,\ell]$ where $\ell\ge N$, and
let $\bar\theta$ be the maximal \Lb\ at index $\ell$.
Then, $\sum\theta Q \le \sum\bar\theta Q$, and
$Q_a +\sum \theta Q \le Q_{\ell+1}$.
In particular,
$\sum \theta Q <Q_{\ell+1}$.

\item Let $\mu^1$ be a non-zero \cf\ in $\cL$ with the non-zero \Lb\ decomposition
$\mu^1= \sum_{m=1}^M \theta^m$
where $\ord(\theta^1)\ge N$.
Let $[a,\ell]$ be the \Lb\ interval of $\theta^1$, and $n:=\ord(\theta^M)$.
\begin{enumerate}
\item
If all the \Lb s are maximal and there are no gaps between them, then
$Q_a + \sum\mu^1 Q =Q_{n+1}$.

\item
If there are no gaps between the \Lb\ intervals, and there is a smallest non-zero non-maximal \Lb\ with \Lb\ interval $[b,j]$, then
$Q_a + \sum\mu^1 Q = Q_b + \sum\resab{[b,n]}(\mu^1) Q<Q_{n+1}$.

\item If there is a smallest index $ c$ such that $\ell<c<n$, and $c$ is not contained
in any of the \Lb\ intervals,
then $Q_a + \sum\mu^1 Q \le Q_c+ \sum\resab{(c,n]}(\mu^1) Q<Q_{n+1}$.

\item
For all cases, $Q_a + \sum\mu^1 Q \le Q_{n+1}$, and hence,
$ \sum\mu^1 Q<Q_{n+1}$.
\end{enumerate}

\item Let $\mu^0$ be a \cf\ in $\cL$ with the \Lb\ decomposition
$\mu^0= \sum_{m=1}^M \theta^m$.
If $n:=\ord(\theta^M)< N$,
then $\sum\mu^0 Q < Q_{2N}$.
\end{enumerate}

\end{lemma}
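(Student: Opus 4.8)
The plan is to reduce the whole lemma to the single‑block estimate of Part~1, which in turn comes from unwinding the definitions of the maximal and proper $L$-blocks attached to $L=(e_1,\dots,e_N)$ and recognizing the recurrence (\ref{eq:L-recursion}).

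\textbf{Part~1.} Fix an $L$-block $\theta$ at index $\ell$ with $L$-block interval $[a,\ell]$, $\ell\ge N$. When $\theta$ is the maximal $L$-block at index $\ell$, its nonzero entries are precisely $\theta_k=\hbeta^{\ell+1}_k$ for $k$ in the top period $[a,\ell]$ (together with the forced vanishing entry just below when $\ell>N$); substituting $\hbeta^{\ell+1}_k=\hat e_{\ell+1-k}$ for $k\in[a,\ell]$ (valid because $\ell+1-k$ stays in $[1,N]$) and then using $Q_{\ell+1}=\sum_{j=1}^N e_jQ_{\ell+1-j}$ (valid since $\ell+1>N$), the added term $Q_a$ combines with the entry carrying $\hat e_N=e_N-1$ to produce exactly $Q_a+\sum\theta Q=Q_{\ell+1}$; the case $\ell=N$, where the maximal block is $\hbeta^{N+1}$ with interval $[1,N]$, is the same computation. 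When $\theta$ is a proper $L$-block with support $[s,\ell]$, one has $\theta_k=\hbeta^{\ell+1}_k$ on $(s,\ell]$ and $\theta_s+1\le\hbeta^{\ell+1}_s$, so $Q_a+\sum\theta Q\le\sum_{k=s}^{\ell}\hbeta^{\ell+1}_kQ_k=\sum_{j=1}^{\ell+1-s}\hat e_jQ_{\ell+1-j}$, which is strictly below $\sum_{j=1}^{N}e_jQ_{\ell+1-j}=Q_{\ell+1}$, either because of the omitted positive term $e_NQ_{\ell+1-N}$ (if $s>\ell-N+1$) or because $\hat e_N=e_N-1<e_N$ (if $s=\ell-N+1$); the zero block is handled by $Q_\ell<Q_{\ell+1}$. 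Finally $\theta_k\le\bar\theta_k$ entrywise (the maximal block $\bar\theta$ carries the whole $\hbeta^{\ell+1}$-pattern across the top period), so $\sum\theta Q\le\sum\bar\theta Q$, and $\sum\theta Q<Q_{\ell+1}$ follows from $Q_a\ge1$. The only delicate points are the location of the reduced digit $\hat e_N$ and the two boundary situations $s=\ell-N+1$ and $\ell=N$.

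\textbf{Part~2.} By Part~1, each block $\theta^m$ of $\mu^1$, with $L$-block interval $[i_m,n_m]$, satisfies $Q_{i_m}+\sum\theta^mQ\le Q_{n_m+1}$, with equality exactly when $\theta^m$ is maximal. I would first prove (d) by induction on the number $M$ of nonzero blocks: strip off $\theta^1$ using Part~1, then either feed the remaining blocks directly into the inductive hypothesis (if $\theta^2$ starts at $\ell+1$), or, if there is a gap, replace $Q_{\ell+1}$ by $Q_{i_2}$ (legitimate since $Q$ is increasing past index $N$) before invoking the hypothesis. For (a), ``all blocks maximal, no gaps'' makes the chain of equalities telescope exactly to $Q_a+\sum\mu^1Q=Q_{n+1}$. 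For (b), the maximal prefix telescopes with equality down to $Q_b+\sum\resab{[b,n]}(\mu^1)Q$ (the lower blocks occupy $[a,b)$ and so are invisible to $\resab{[b,n]}$), then Part~1 applied to the first nonmaximal block produces a strict drop, after which the rest telescopes with $\le$ to $Q_{n+1}$. For (c), one checks (using minimality of $c$) that the blocks lying to the left of the gap are consecutive and tile $[a,c-1]$, so they telescope to $Q_a+\sum\mu^1Q\le Q_c+\sum\resab{(c,n]}(\mu^1)Q$; applying the already‑established (d) to the post‑gap blocks gives $Q_i+\sum\resab{(c,n]}(\mu^1)Q\le Q_{n+1}$ for the starting index $i>c$ of the first post‑gap block, and $Q_c<Q_i$ (strict, as $Q_m>Q_{m-1}$ for $m>N$ because $Q_{m-N}\ge1$) completes it. One also verifies that these three configurations exhaust all possibilities and that any such gap index obeys $\ell<c<n$. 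I expect this case bookkeeping --- in particular establishing ``no gap before $c$'' so that the prefix telescopes cleanly --- to be the main obstacle of the whole lemma; the inductive proof of (d) is precisely what makes arbitrary gap patterns tractable.

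\textbf{Part~3.} Since $\mu^0$ is supported on $[1,N-1]$ and every entry of a \cf\ in $\cL$ is at most $D:=\max_{1\le j\le N}e_j$ (an entry is either an $\hat e$-value or something strictly smaller), we get $\sum\mu^0Q\le D(Q_1+\cdots+Q_{N-1})$. Unfolding the recurrence writes $Q_{2N}=\sum_{k=1}^{N}c_kQ_k$, where $c_k$ is a positive integer: a product‑weighted count of the descending paths from $2N$ to $k$ whose steps lie in $[1,N]$ and whose partial indices stay above $N$ until the final step. For each $k\le N-1$ I would show $c_k\ge D$ by exhibiting, for every target step size $j\in[1,N]$, one such path that uses a step of size $j$ (walk down by $1$'s to a suitable index, take one step of size $j$, then one last step into $k$), so $c_k\ge e_j$ for all $j$ and hence $c_k\ge D$; also $c_N\ge e_N\ge1$. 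Then $\sum\mu^0Q\le D(Q_1+\cdots+Q_{N-1})\le\sum_{k=1}^{N-1}c_kQ_k<\sum_{k=1}^{N}c_kQ_k=Q_{2N}$, the last inequality strict because $c_NQ_N\ge1$. The only nontrivial ingredient is the elementary but slightly fiddly verification that each coefficient $c_k$ with $k<N$ dominates $\max_j e_j$.
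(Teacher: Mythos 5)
Your proposal is correct, and for Parts 1 and 2 it follows essentially the paper's route. Part 1 is the same direct manipulation of the recurrence; your entrywise comparison $\theta_k\le\bar\theta_k$ is a slightly cleaner way to get $\sum\theta Q\le\sum\bar\theta Q$ than the paper's algebra, and your refinement that $Q_a+\sum\theta Q\le Q_{\ell+1}$ is strict for non-maximal blocks is exactly what the strict inequality in Part 2(b) needs (the paper uses this implicitly). In Part 2 you invert the paper's logic: the paper runs a simultaneous induction on the number of blocks to prove (a), (b), (c) and obtains (d) as the exhaustive-cases corollary, whereas you prove (d) first by a self-contained induction (strip $\theta^1$ via Part 1, bridge a gap using the monotonicity of $Q_m$ for $m\ge N$) and then read off (a), (b), (c) from Part 1 plus (d); both are sound, and your ordering isolates the telescoping mechanism cleanly. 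Part 3 is where you genuinely diverge: the paper simply shifts $\mu^0$ up by $N$ indices, uses $Q_k<Q_{k+N}$, and applies Part 2(d) to the shifted \cf\ of order $\le 2N-1$, giving $\sum\mu^0 Q<Q_{2N}$ almost immediately, while you unfold $Q_{2N}=\sum_{k=1}^{N}c_kQ_k$ and bound $c_k\ge\max_j e_j$ for $k\le N-1$ by exhibiting paths. That works, with one small repair: for step size $j=N$ your ``walk down by ones, take a $j$-step, then one last step into $k$'' template forces the intermediate index after the $j$-step above $2N$, so for $j=N$ you should instead make the $N$-step the final step into $k$ (path $2N\to 2N-1\to\cdots\to k+N\to k$, weight $e_1^{N-k}e_N\ge e_N$); with that, $c_k\ge e_j$ for every $j$ and the estimate closes. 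The paper's shift trick buys brevity and reuses Part 2(d); your computation is more elementary and makes the coefficient growth of $Q_{2N}$ explicit, at the cost of the path bookkeeping.
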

\begin{proof}

Let us prove Part 1.
Since $\ell\ge N$, by the linear recurrence,
$Q_{\ell+1}= \sum_{k=1}^{N } e_k Q_{\ell-k+1}
=\sum\bar\theta Q + Q_{\ell-N+1}$
\begin{gather*}
\begin{aligned}
\implies\quad
Q_{\ell+1} -\sum \theta Q- Q_a
	&= \sum_{k=1}^{N } (e_k - \theta_{\ell-k+1}) Q_{\ell-k+1}- Q_a\\
&=(e_{\ell-a+1} - \theta_{a}) Q_{a}
+\sum_{k=\ell-a+2}^{N} e_k Q_{\ell-k+1}- Q_a.
\end{aligned}\\
\intertext{Since $e_{\ell+1-a}-\theta_a\ge 1$,}
Q_{\ell+1} -\sum \theta Q- Q_a
\ge Q_{a}
+\sum_{k=\ell-a+2}^{N} e_k Q_{\ell-k+1}- Q_a
\ge \sum_{k=\ell-a+2}^{N} e_k Q_{\ell-k+1}\ge 0,\\
\intertext{This proves
$Q_{\ell+1}\ge \sum \theta Q+ Q_a>\sum \theta Q$.
Also, $
Q_{\ell+1}=Q_{\ell-N+1} + \sum\bar\theta Q$ implies}
\sum_{k=\ell-a+2}^{N} e_k Q_{\ell-k+1}\le
Q_{\ell+1} -\sum \theta Q- Q_a=
\left( Q_{\ell-N+1} + \sum\bar\theta Q \right) -\sum \theta Q- Q_a
\\
\implies\quad
\sum_{k=\ell-a+2}^{N} e_k Q_{\ell-k+1} + Q_a - Q_{\ell-N+1}
\le
\sum\bar\theta Q -\sum \theta Q
\end{gather*}
If $a=\ell-N+1$, then
$\sum_{k=\ell-a+2}^{N} e_k Q_{\ell-k+1}=0$, and
$\sum\bar\theta Q -\sum \theta Q \ge 0$.
If $a>\ell-N+1$, then $\sum_{k=\ell-a+2}^{N} e_k Q_{\ell-k+1}
\ge e_N Q_{\ell-N+1}$, so we have $\sum\bar\theta Q -\sum \theta Q \ge Q_a>0$ as well.

We shall use the mathematical induction on $M$ to prove Part 2.
Since Part 2 (c) makes sense only for $M\ge 2$,
the initial cases of the induction are $M=1,2$.
Since the proofs of $M=1,2$ are very similar to the proof of
the main induction step with arbitrary $M$, we leave it to the reader.
Assume that Part 2 is true for some $M\ge 2$.
Let
$\mu^1= \sum_{m=1}^{M+1} \theta^m$ be a
non-zero \Lb\ decomposition, and let
$[i_m,n_m]$ be the \Lb\ interval of $\theta^m$.
Suppose there are no gaps between the \Lb\ intervals.
If the \Lb\ intervals are all maximal,
then by the linear recurrence, $Q_{i_1} + \sum_{m=1}^{M+1} \theta^m Q
= Q_{i_2}+ \sum_{m=2}^{M+1} \theta^mQ$.
By the induction hypothesis for Part 2 (a),
$
Q_{i_2}+ \sum_{m=2}^{M+1} \theta^m Q=Q_{n_{M+1}+1}$.
So, Part 2 (a) for $M+1$ is proved.
If there is a smallest index $t\ge 1$ such that $\theta^t$ is not maximal,
then
$Q_{i_1} + \sum_{m=1}^{M+1} \theta^m Q
= Q_{i_t}+ \sum_{m=t}^{M+1} \theta^mQ$ by
the induction hypothesis for Part 2 (a).
Since $n_t \ge N$, we have
$ Q_{i_t}+\sum \theta^t Q + \sum_{m=t+1}^{M+1} \theta^mQ
\le Q_{i_{t+1}} + \sum_{m=t+1}^{M+1} \theta^mQ$
by Part 1.
Thus, by the induction hypothesis for Part 2 (d),
$Q_{i_1} + \sum_{m=1}^{M+1} \theta^m Q
<Q_{i_{t+1}} + \sum_{m=t+1}^{M+1} \theta^mQ
\le Q_{n_{M+1}+1}$.
This proves Part 2 (b) for $M+1$.
Suppose that there is a smallest gap $c$ between the \Lb\ intervals.
Then, $n_k+1=c<i_{k+1}$ for some $1\le k \le M$, and
by the induction hypothesis for Part 2 (d),
$$
Q_{i_1} + \sum_{m=1}^{M+1} \theta^m Q
=Q_{i_1} + \sum_{m=1}^{k} \theta^m Q
+ \sum_{m=k+1}^{M+1} \theta^m Q
\le Q_c +\sum_{m=k+1}^{M+1} \theta^m Q. $$
Since $c\ge N$, by the linear recurrence,
$Q_c < Q_{i_{k+1}}$, so
$Q_c +\sum_{m=k+1}^{M+1} \theta^m Q <
Q_{i_{k+1}} +\sum_{m=k+1}^{M+1} \theta^m Q
\le Q_{n_{M+1}+1}$ where
the induction hypothesis for Part 2 (d) is used for
the last inequality.
This proves Part 2 (b) for $M+1$.
Since any non-zero \Lb\ decomposition satisfies
one of the three possibilities for the gaps between \Lb\ intervals
described in Part 2 (a,b,c), we proved Part 2 (d) for $M+1$.

Let us prove Part 3.
Notice that the linear recurrence implies that $Q_k < Q_{k+N}$ for all $k\ge 1$, and
hence, $\sum \mu^0 Q = \sum_{k=1}^{N-1} \mu^0_k Q_k
<\sum_{k=1}^{N-1} \mu^0_k Q_{k+N}$.
Let $\mu^1 = \sum_{k=N+1}^{2N-1}\mu^0_{k-N}\beta^k$.
Then, $\sum \mu^1 Q = \sum_{k=1}^{N-1} \mu^0_k Q_{k+N}$,  
$n:=\ord(\mu^1)\le 2N-1$, and 
 $\mu^1$ has a decomposition into $L$-blocks as described in Part 2.
Hence, by Part 2 (d),
we have 
$\mu_0<\sum_{k=1}^{N-1} \mu^0_k Q_{k+N} = \sum \mu^1 Q < Q_{n+1} \le Q_{2N}$.

\end{proof}

\newcommand{\muo}{\mu^1}
\newcommand{\muz}{\mu^0}
\newcommand{\sigo}{\sig^1}
\newcommand{\sigz}{\sig^0}
Let us prove Theorem \ref{thm:unique}.
In fact, we prove a more specific version of the theorem, and it is stated below.
\begin{theorem}\label{thm:unique-2}
Let $L$ be a \zml\ $(e_1,\dots,e_N)$ such that $e_k\ge 1$ for all $1\le k\le N$,
and let $Q$ be a sequence in $\nat$.
Let $\sig$ and $\mu$ be two non-zero \cf s in $\cL$ such that
$\sum \sig Q = \sum \mu Q$.
\begin{enumerate}
\item
If
$\sig_k =\mu_k$ for all $k> 3N$,
then there are \Lz\ \cf s $\al$ and $\gamma$ of order $\le 4N-1$ and $\rho\in\cL$ such that
$
\sig =\gamma + \rho$ and $
\mu = \al + \rho$, and
the \Lb\ intervals of non-zero \Lb s in $\rho$ are
contained in $(M,\infty)$ where $M=\max\set{\ord(\gamma),\ord(\al)}$.

\item
If there is a largest integer $K> 3N$ such that
$\sig_K >\mu_K$,
then there are \cf s $\al$ and $\gamma$ in $\cL$, and an index $a$ such that
$\ord(\al)\le 3N-2$, $\ord(\gamma)\le 2N-1$,
$\max\set{\ord(\al)+1,2N}\le a\le 3N$, and
\begin{align*}
\sig &=\gamma + (c+1)\beta^{1+\ell + tN} + \rho\\
\mu &= \al +\theta+ \bar\theta+ c\beta^{1+\ell + tN} + \rho
\end{align*}
where $\theta$ is a non-zero \Lb\ with \Lb\ interval $[a,\ell]$,
$\bar\theta=\resab{(\ell,\ell+tN]}(\hbeta^{1+\ell + tN})$, and $\rho$ is a \cf\ supported on $(1+\ell+tN,\infty)$ such that
$(c+1)\beta^{1+\ell + tN} + \rho$ is a sum of disjoint \Lb s for some
integer $c\ge 0$.
Moreover, the equality $\sum\sig Q = \sum\mu Q$ is
reduced under
the $L$-linear recurrence to the equation $\sum( \al+\theta) Q =
\sum \gamma Q + Q_{\ell+1}$.

\end{enumerate}

\end{theorem}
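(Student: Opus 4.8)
The plan is to read the structure of $\sig$ and $\mu$ off their $L$-block decompositions, using the cascade estimates of Lemma~\ref{lem:cascade} and the $L$-recurrence~(\ref{eq:L-recursion}) for $Q$ (which gives $Q_k<Q_{k+N}$ and lets one collapse runs of maximal $L$-blocks). Two facts drive the argument. First, because the immediate-predecessor pattern $\hbeta^n$ is $N$-periodic, a block boundary of an $L$-block decomposition lying above a given index is determined by only boundedly many coefficients below it; hence, if $\sig$ and $\mu$ agree on all coefficients in a range of indices, then their $L$-block decompositions coincide above some index that lies only $O(N)$ into that range. Second, repeated application of $Q_m=e_1Q_{m-1}+\cdots+e_NQ_{m-N}$ yields the telescoping identity $\sum\bar\theta Q+Q_{\ell+1}=Q_{1+\ell+tN}$ whenever $\bar\theta=\resab{(\ell,\ell+tN]}(\hbeta^{1+\ell+tN})$ (one period collapses at a time).

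For Part~1, suppose $\sig_k=\mu_k$ for all $k>3N$. By the periodicity remark there is an index $c$, with $3N<c\le 4N$, that is simultaneously a block boundary of the $L$-block decomposition of $\sig$ and of $\mu$. Put $\rho:=\resab{[c,\infty)}(\sig)=\resab{[c,\infty)}(\mu)$, the common sum of disjoint $L$-blocks above $c$, so $\rho\in\cL$; and put $\gamma:=\resab{[1,c)}(\sig)$, $\al:=\resab{[1,c)}(\mu)$, each a sum of disjoint $L$-blocks, hence in $\cL$, and of order $<c\le 4N$, i.e.\ $\le 4N-1$. With $M:=\max\{\ord(\gamma),\ord(\al)\}<c$ the block intervals of the nonzero $L$-blocks of $\rho$ lie in $[c,\infty)\subseteq(M,\infty)$, and $\sig=\gamma+\rho$, $\mu=\al+\rho$, which is Part~1. (The equality $\sum\sig Q=\sum\mu Q$ is not needed here; it only guarantees $\gamma\neq\al$ when $\sig\neq\mu$.)

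For Part~2, let $K>3N$ be the largest index with $\sig_K\neq\mu_K$ and assume $\sig_K>\mu_K$, so $\sig$ and $\mu$ agree on $(K,\infty)$. By the periodicity remark their $L$-block decompositions coincide above some bounded index; writing $K=1+\ell+tN$ (the disagreement at $K$ being exactly the carry), let $\rho$, supported on $(1+\ell+tN,\infty)$, be the common upper part, so the value at index $1+\ell+tN$ is $c+1$ in $\sig$ and $c$ in $\mu$ for some $c\ge 0$. Below index $1+\ell+tN$ one now compares the decompositions: on $\mu$'s side a run of $t$ maximal $L$-blocks occupies $[\ell+1,\ell+tN]$, namely $\bar\theta=\resab{(\ell,\ell+tN]}(\hbeta^{1+\ell+tN})$, immediately beneath which sits the topmost non-maximal nonzero $L$-block $\theta$ with interval $[a,\ell]$, everything lower being collected into $\al$; on $\sig$'s side, below $1+\ell+tN$, there remains only the low-order part $\gamma$. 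Cancelling the common $\rho$ and the common amount $cQ_{1+\ell+tN}$ from $\sum\sig Q=\sum\mu Q$ and substituting $Q_{1+\ell+tN}=\sum\bar\theta Q+Q_{\ell+1}$, the equality reduces to $\sum(\al+\theta)Q=\sum\gamma Q+Q_{\ell+1}$, which is the last assertion of the theorem. It remains to bound $\ord(\al)$, $a$, $\ord(\gamma)$: the reduced equation forces $\sum(\al+\theta)Q\ge Q_{\ell+1}$, whereas Lemma~\ref{lem:cascade}, Part~2 (in the applicable case, with $Q_k<Q_{k+N}$) makes this possible only when the supports of $\al$ and of $\theta$ (the interval $[a,\ell]$) and any gap between them are packed near the bottom; running the three cases of Lemma~\ref{lem:cascade}, Part~2(a)--(c) then yields $\ord(\al)\le 3N-2$, $\max\{\ord(\al)+1,2N\}\le a\le 3N$, $\ord(\gamma)\le 2N-1$, and hence $\ell\le a+N-1\le 4N-1$.

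The step I expect to be the main obstacle is the periodicity remark in quantitative form --- proving that an $L$-block boundary above an index is a function of only the $O(N)$ coefficients just below it, with the precise window producing $c\le 4N$ --- and then pushing the resulting bookkeeping through the maximal/proper/zero-block and gap/no-gap cases to extract the sharp constants $4N-1$, $3N-2$, $2N-1$, $3N$. These are not slack: they come from balancing the $L$-block length bound (at most $N$), the period $N$ of $\hbeta$, and the strict cascade inequalities $\sum\theta Q<Q_{\ell+1}$ and $Q_a+\sum\theta Q\le Q_{\ell+1}$ of Lemma~\ref{lem:cascade}, and keeping all three synchronized across the cases is where the real work lies.
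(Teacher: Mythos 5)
Part~1 of your proposal is fine and is essentially the paper's argument: every \Lb\ of order $\ge 4N$ has its \Lb\ interval (indeed all of its nonzero coefficients) inside $(3N,\infty)$, where $\sig$ and $\mu$ agree, so the high blocks of the two decompositions coincide and can be cancelled, leaving $\gamma,\al$ of order $\le 4N-1$; your ``common boundary in $(3N,4N]$'' phrasing is a harmless repackaging of this (the justification is the length bound $\le N$ on \Lb\ intervals and the top-down uniqueness of the decomposition, not really periodicity of $\hbeta^n$).

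Part~2, however, asserts rather than proves precisely the facts that constitute the theorem. You take as read: (a) that $\sig_K=\mu_K+1$ (``the carry''), when a priori $\sig_K$ could exceed $\mu_K$ by more; (b) that $K$ is a block boundary of $\mu$ and that $(c+1)\beta^K+\rho$ is a sum of disjoint \Lb s; (c) that immediately below $K$ the decomposition of $\mu$ is a gap-free run of maximal \Lb s sitting directly on a \emph{nonzero} block $\theta$ (if the block beneath the run were a zero block, or if there were no nonzero block with interval in $[2N,\infty)$ at all, your telescoped reduction $Q_K=\sum\bar\theta Q+Q_{\ell+1}$ and the claimed form of $\mu$ both fail); and (d) that $\sig$ vanishes on $[2N,K)$, which is what gives $\ord(\gamma)\le 2N-1$. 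None of these is read off the decomposition: in the paper each is extracted from the \emph{unreduced} equality via Lemma~\ref{lem:cascade}, playing the lower bound $\sum\sig^1 Q\ge\sum\mu^* Q+Q_K$ against the upper bound $\sum\sig^1 Q<\sum\mu^* Q+\sum\mu^1 Q+Q_{2N}$, and the hypothesis $K>3N$ enters exactly there. Once you cancel $Q_K$ and pass to $\sum(\al+\theta)Q=\sum\gamma Q+Q_{\ell+1}$, that information is largely gone, and your plan to recover $\ord(\al)\le 3N-2$, $2N\le a\le 3N$ by ``running the three cases'' of Lemma~\ref{lem:cascade} is not substantiated. Worse, your definition of $\theta$ (topmost non-maximal nonzero \Lb\ beneath the maximal run) does not deliver the stated bounds: for $L=(11,3)$, $(Q_1,Q_2)=(19,3)$, $\sig=\beta^7$, $\mu=9\beta^1+3\beta^2+\beta^3+11\beta^4+2\beta^5+11\beta^6$, your parsing gives $\bar\theta=2\beta^5+11\beta^6$, $\theta=\beta^3+11\beta^4$, hence $a=3<2N$, so the inequality $\max\set{\ord(\al)+1,2N}\le a$ is simply false for your choice. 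The decomposition the theorem needs takes $\theta$ to be the smallest nonzero \Lb\ of $\mu$ whose \Lb\ interval is contained in $[2N,\infty)$ (here $2\beta^5+11\beta^6$, a maximal block, with $t=0$), and the real work --- showing this $\theta$ exists, that all blocks between it and $K$ are maximal and contiguous up to $K-1$, and the bound $a\le 3N$ --- is exactly what your write-up skips.
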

For example, if $L=(11,3)$ and $(Q_1,Q_2)=(19,3)$,
then $2Q_3 = 3Q_2 + 9Q_1$, and
\begin{align*}
(11Q_4+Q_3)+2Q_3 &= (11Q_4+Q_3)+3Q_2 + 9Q_1\\
\implies
Q_5 & = 11Q_4+Q_3 +3Q_2 + 9Q_1\\
\implies
Q_7 & =11Q_6+2Q_5+ 11Q_4+Q_3 +3Q_2 + 9Q_1.
\end{align*}
Then, the last expression
fits into Part 2 of Theorem \ref{thm:unique-2}
with $\gamma=\rho=0$, $\theta = \beta^3 + 11\beta^4$,
$\bar\theta = 2\beta^5 + 11\beta^6$,
and
$\al=9\beta^1 +3\beta^2$.

\begin{proof}

Let $Q$ be a sequence in $\nat$ such that $\sum\mu Q = \sum\sig Q$
where $\mu$ and $\sig$ are distinct non-zero \cf s in $\cL$,
and
$\mu \aord \sig$.
Let us prove Part 1.
Suppose that $\sig_k = \mu_k$ for all $k>3N$.
Then,
the \Lb\ intervals of (non-zero)
\Lb s of order $\ge 4N$ are contained in
$[3N+1,\infty)$, and hence,
the set of \Lb s of order $\ge 4N$ in $\sig$
is equal to the set of \Lb s of order $\ge 4N$ in $\mu$ .
Thus, after cancelling the \Lb s of order $\ge 4N$,
the equality reduces to $\sum \sig' Q = \sum \mu' Q$
where $\sig'$ and $\mu'$ are distinct members of $\cL$, and their orders are $\le 4N-1$. So, the theorem is proved for this case.

Let us prove Part 2.
Suppose that there is a largest index $K>3N$ such that $\sig_K > \mu_K$.
Then, let us claim that there is an \Lb\ in $\mu$
with \Lb\ interval $[K,s]$.
Notice that $\sig_k = \mu_k$ for all $k>K$, and
let $\theta^1$ and $\theta^2$ be the smallest \Lb s of $\sig$ and $\mu$ at index $>K$, respectively, including zero \Lb s.
Lemma \ref{lem:E-unique}, Part 1 is valid for \Lb\ decompositions as well,
and since $\sig_k=\mu_k$ for $k>K$,
the two blocks
$\theta^1$ and $\theta^2$ must have the same index $s$
.
Notice that the choice of $\theta^1$ implies that
the \Lb\ interval of $\theta^1$ is either $[K+1,s]$ or $[t,s]$ for $t\le K$,
and we have the inequality $\mu_K< \sig_K \le e_j$ for some $1\le j\le N$.
If it is $[K+1,s]$, then it follows that $\mu_K\beta^K$ where
$\mu_K<e_1$ is an
\Lb\ of $\mu$ with \Lb\ interval $[K,K]$, and
if it is $[t,s]$ for $t\le K$, then it follows that $\theta^2$ is an \Lb\ of $\mu$ with \Lb\ interval $[K,s]$.
It proves the claim, and let $\mu^*:=\resab{[K,\infty)}(\mu)$.
Then, $\mu^*$ is a sum of disjoint \Lb s,
$\sig \equiv \mu^* \reS (K,\infty)$, and $\sig_K \ge \mu^*_K+1$.

Let $\muz$ be the sum of \Lb s in $\mu$ whose \Lb\ intervals are
contained in $[1,N)$, and let
$\mu^1:=\mu-\mu^*-\muz$.
Then, $\mu^1\in\cL$ since it is a sum of disjoint \Lb s, and
$\mu = \mu^* + \mu^1 + \mu^0$.
Let $\sigz$ be the sum of \Lb s in $\sig$ whose \Lb\ intervals are
contained in $[1,N)$, and let
$\sigo:=\sig-\sigz$.
Then, $\sigo\in\cL$, and $\sig = \sigo + \sigz$.
Also,
$\sum \sigo Q \ge \sum \mu^* Q + Q_K$.

By Lemma \ref{lem:cascade}, Part 3,
we have $ \sum \muz Q - \sum\sigz Q \le \sum \muz Q<Q_{2N }$, and hence,
\begin{equation}\label{eq:cascade}
\sum \sigo Q = \sum \mu^* Q+ \sum \muo Q + \sum \muz Q - \sum \sigz Q
<\sum \mu^* Q+\sum \muo Q + Q_{2N } .
\end{equation}%
\newcommand{\muoo}{\mu^{11}}
\newcommand{\muot}{\mu^{10}}
Let us further decompose $\mu^1$.
Let $\muoo$ be
the sum of all \Lb s in $\muo$
whose \Lb\ intervals are contained in $[2N,\infty)$, and
let $\muot:=\muo-\muoo \in \cL$.
Then, $\mu^1 = \muoo + \muot$.
Since the number of indices in the \Lb\ interval of an \Lb\ is $\le N$,
we have $\ord(\muot)\le 3N-2$;
otherwise, the \Lb\ interval of a largest non-zero \Lb\ in $\muot$ is
contained in $[2N,\infty)$.

Let us prove that $\muoo$ is non-zero.
Assume that $\muoo=0$, and let $m:=\ord(\muot)\le 3N-2$.
By Lemma \ref{lem:cascade}, Part 2 (d),
$\sum\muot Q <Q_{m+1}\le Q_{3N-1}$,
and hence,
the inequality (\ref{eq:cascade}) implies
$\sum \sigo Q <\sum \mu^* Q+ \sum \muot Q + Q_{2N }
< \sum \mu^* Q+ Q_{3N}+ Q_{2N }$.
Since $Q_{3N}+Q_{2N}$ is the sum of distinct \Lb s, Lemma \ref{lem:cascade}, Part 2 (d) implies that $Q_{3N}+Q_{2N}<Q_{3N+1}\le Q_K$, and it yields
$\sum \sigo Q <\sum \mu^* Q + Q_K\le \sum \sigo Q$.
Since it implies $\sum \sigo Q < \sum\sigo Q$, we prove that $\muoo$ is non-zero.

Let $\theta$ be the smallest non-zero \Lb\ in $\muoo$,
let $[a,\ell]$ be its \Lb\ interval, and let $n:=\ord(\muoo)$.
Let us prove that $\muoo =
\theta+ \resab{(\ell,K)}(\hbeta^K)$.
Suppose that there is a smallest zero \Lb\ in $\muoo$ at index $c$ such that $a\le \ell < c<n$.
Then, by Lemma \ref{lem:cascade}, Part 2 (c),
\begin{align}
\sum \sigo Q &<\sum \mu^* Q+\sum \muoo Q +\sum \muot Q + Q_{2N }
\le\sum \mu^* Q+\sum \muoo Q + Q_{a }+\sum \muot Q\notag \\
& \le \sum \mu^* Q+\sum\resab{(c,n]}( \mu^{11})Q+Q_c +\sum \muot Q .
\label{eq:mu-decomp}
\end{align}
Since $e_2\ge 1$, the \cf\ $\beta^c$
makes an \Lb\ with \Lb\ interval $[c,c]$ or $[c-1,c]$.
Notice that $\theta+\muot$ is the sum of disjoint \Lb s,
and $c-1\ge a$.
Thus, the \cf\ of the last three terms of (\ref{eq:mu-decomp}) is
the sum of
disjoint \Lb s.
By Lemma \ref{lem:cascade},
Part 2 (d), we have
$\sum \sigo Q <\sum \mu^* Q + Q_{n+1} \le \sum \mu^* Q + Q_K\le \sum\sigo Q$.
Thus, the non-zero \Lb s in $\muoo$ have no gaps between their \Lb\ intervals.
In fact, even if some $e_j$ are zeros, $\theta$ being a non-zero \Lb\ implies
that $\beta^c + \muot$ is the sum of disjoint \Lb s, but
we do require $e_k\ge 1$ for the next part.

Suppose that $\muoo$ has a smallest non-zero \Lb\ $\theta^0$ with \Lb\
interval $[b,s]$ such that $\theta^0$ is not maximal, and $a\le \ell <b$.
Then, by Lemma \ref{lem:cascade}, Part 2 (b),
\begin{align*}
\sum \sigo Q &<\sum \mu^* Q+\sum \muoo Q +\sum \muot Q + Q_{2N }
\le\sum \mu^* Q+\sum \muoo Q + Q_{a }+\sum \muot Q \\
& \le \sum \mu^* Q+\sum \resab{[b,n]}(\muoo)Q + Q_b +\sum \muot Q\\
&=\sum \mu^* Q+\sum \resab{(s,n]}(\muoo)Q
+\sum(\theta^0 + \beta^b)Q +\sum \muot Q.
\end{align*}
There is a non-negative integer $v$ such that
$\theta^0=\sum_{k=1}^{s-b} e_k \beta^{s-k+1} + v \beta^b$
and $v<e_{s-b+1}$.
If $s-b+1=N$,
then $v<e_N-1$ since the \Lb\ $\theta^0$ is non-maximal, and hence, $\theta^0+\beta^b$ is an \Lb\ with \Lb\ interval
$[b,s]$.
Otherwise, the \Lb\ interval is either $[b,s]$ or $[b-1,s]$ since
$e_{s-b+2}\ge 1$.
Since $b-1\ge a$, the last three terms of the last expression is the
sum of disjoint \Lb s.
By Lemma \ref{lem:cascade},
Part 2 (d), we have
$\sum \sigo Q <\sum \mu^* Q + Q_{n+1} \le \sum \mu^* Q + Q_K\le \sum\sigo Q$.

Thus, $\muoo = \theta+\resab{(\ell,n]}(\hbeta^{n+1})$.
Let us show that $n=K-1$.
If $n<K-1$, then by Lemma \ref{lem:cascade}, Part 2 (d),
\begin{gather*}
\sum \sigo Q < \sum \mu^* Q+\sum \muoo Q +\sum \muot Q + Q_{2N }
\le\sum \mu^* Q+\sum \muoo Q + Q_{a }+\sum \muot Q \\
\le
\sum \mu^* Q+Q_{n+1}+\sum \muot Q
< \sum \mu^* Q+Q_{n+2}
\le \sum \mu^* Q+Q_K \le \sum\sig^1 Q.
\end{gather*}
This concludes the proof of
$\muoo=\theta + \resab{(\ell,K)}(\hbeta^{K})$.
Moreover, if $\mu_K+1<\sig_K$, then
$\sum \sigo Q <\sum \mu^* Q+\sum \muoo Q + Q_{a}+\sum \muot Q
\le\sum \mu^* Q+ Q_K+\sum \muot Q<\sum \mu^* Q+ Q_K+ Q_{m+1}
<\sum \mu^* Q+ Q_K+ Q_{K}$
by Lemma \ref{lem:cascade}, Part 2 (d) where $m=\ord(\muot)\le 3N-2$.
Then,
$\sum \sigo Q <\sum \mu^* Q+ 2Q_K\le
\sum\sigo Q$.
Thus, we have $\mu_K+1=\sig_K$ as well.

Let us also show that $a\le 3N$.
If $a\ge 3N+1$, then
\begin{gather*}
\sum \sigo Q < \sum \mu^* Q+\sum \muoo Q + Q_{2N}+\sum \muot Q\\
<\sum \mu^* Q+\sum \muoo Q + Q_{3N}+\sum \muot Q.
\end{gather*}
The \cf\ $\beta^{3N}$ or $\beta^{3N}+0\cdot \beta^{3N-1}$ makes an
\Lb\ since $e_2\ge 1$,
and since $\ord(\muot)\le 3N-2$ and $a\ge 3N+1$, the \cf\
$\mu^* + \muoo +\beta^{3N}+ \muot $ is the sum of disjoint
\Lb s.
By Lemma \ref{lem:cascade}, Part 2 (d),
the previous inequality implies
$ \sum \sigo Q < \sum \mu^* + Q_K\le \sum \sigo Q$.

Let us claim that $\sig_k=0$ for all $2N\le k<K$.
Suppose that $\sig_r >0$ for some $2N\le r<K$.
Then, $\sigo_K = \mu^*_K+1$ and
$Q_K\ge \sum\muoo Q + Q_a$ imply
\begin{align*}
\sum \sigo Q -\sum\mu^* Q- \sum\muo Q
	&\ge Q_K + Q_r - \sum\muoo Q - \sum \muot Q \ge Q_r + Q_a- \sum \muot Q .
\end{align*}
If $ \muot $ is non-zero, then
by Lemma \ref{lem:cascade}, Part 2 (d),
$\sum \muot Q < Q_{m+1}$ where $N\le m=\ord(\muot)<a$,
and hence, $ \sum \muot Q < Q_{m+1}\le Q_a$ since $Q_{k}$ for $k\ge N$
is an increasing sequence.
Thus,
\begin{align*}
\sum \sigo Q -\sum\mu^* Q - \sum\muo Q > Q_r \ge Q_{2N },
\end{align*}
and this holds for $\muot=0$ as well.
On the other hand, $
\sum \sigo Q -\sum\mu^* Q- \sum\muo Q =\sum\muz Q - \sum \sigz Q
<Q_{2N }$,
which contradicts the above inequality.
Therefore,
$\sig_k = 0$ for all $2N\le k <K$.

Recall that $\hat e_k:=e_k$ if $1\le k\le N-1$, and
$\hat e_{N-1}:=e_N-1$.
Then, the linear recurrence implies
\begin{gather*}
Q_K =\sum_{k=1}^N \hat e_k Q_{K-k}
+\sum_{k=1}^N \hat e_k Q_{K-N-k}+\cdots
+ \sum_{k=1}^N e_k Q_{\ell-k+1} \\
\implies
\sum\muz Q - \sum \sigz Q
= \sum \sigo Q -\sum\mu^* Q - \sum\muoo Q - \sum\muot Q \\
\phantom{\sum\muz Q - \sum \sigz Q }
\begin{aligned}
&
=\sum \resab{[1,2N)}(\sig^1)Q + Q_K
- \sum \theta Q
-\sum \resab{(\ell,K)}(\hbeta^K)Q - \sum \muot Q\\
&
=\sum \resab{[1,2N)}(\sig^1)Q +
\sum_{k=1}^N e_k Q_{\ell-k+1}
-\sum \theta Q - \sum \muot Q
\end{aligned}\\
\implies
\sum\muz Q + \sum \muot Q + \sum \theta Q
= \sum \sigz Q +\sum \resab{[1,2N)}(\sig^1)Q + Q_{\ell+1}\\
\phantom{\implies\sum\muz Q + \sum \muot Q + \sum \theta Q}
= \sum \resab{[1,2N)}(\sig)Q + Q_{\ell+1}.
\end{gather*}
Notice that $Q_{\ell+1}$ or $0\cdot Q_\ell + Q_{\ell+1}$ makes
an \Lb.
Since $2N+1\le a+1\le \ell+1 \le a+N\le 4N $,
the \cf\ $\resab{[1,2N)}(\sig)+Q_{\ell+1}$
is a sum of disjoint \Lb s, and it has order $\le 4N$.
The \cf\ of the LHS is a sum of disjoint \Lb s, which has order $\le 4N-1$.
This concludes the proof of Part 2.

\end{proof}

Theorem \ref{thm:unique-2} proves the if-part of Theorem \ref{thm:unique}, and
the only-if-part is trivial.

\section{Future Work}

Recall Theorem \ref{thm:daykin}, the full converse of \zec's Theorem, and that
the full converse fails for $\nat$ under
the \zec\ condition $\cE$ defined in Example \ref{exm:converse-fail}.
We may ask ourselves
whether the full converse holds for all $\cE$-subsets of $\nat$ if it holds for $\nat$, or which additional conditions on \zec\ collections will guarantee the full converse.
For example, if $\cL$ is the \zec\ collection defined by the \zml\ $L=(1,1)$
and $Q$ is a sequence given by $Q_k=2^{k-1}$ for $k\ge 1$,
does $X_Q^\cL$ have a unique fundamental sequence?

We proved the weak converse for $\OI$ under the \zec\ conditions determined by certain short linear recurrences considered in Theorem \ref{thm:real}, Part 2, but the weak converse for other \zec\ conditions remains to be investigated.
Even for the \zec\ conditions considered in Theorem \ref{thm:real}, Part 2, we may ask ourselves whether the weak converse holds for $\cL$-subsets of $\OI$.
For example, if
$Q$ is a sequence given by $Q_k=1/2^{k}$ for $k\ge 1$
and $L=(1,1)$,
does $X_Q^\cL$ have a unique decreasing fundamental sequence?

\end{document}